\let\oldmarginpar\marginpar
\renewcommand\marginpar[1]{\oldmarginpar[\raggedleft\footnotesize #1]%
{\raggedright\footnotesize #1}}
\theoremstyle{plain}
\newtheorem{thm}{Theorem}[section]
\newtheorem{definition}[thm]{Definition}
\newtheorem{lem}[thm]{Lemma}
\newtheorem{lemma}[thm]{Lemma}
\newtheorem{prop}[thm]{Proposition}
\theoremstyle{definition}
\newtheorem{remark}[thm]{Remark}
\newcommand{\Q}{\mathbb{Q}}
\newcommand{\Z}{\mathbb{Z}}
\newcommand{\G}{\mathbb{G}}
\newcommand{\E}{||E_c||}
\newcommand{\X}{\chi_-}
\newcommand{\cutalong}{\backslash \backslash}
\newcommand{\QQ}{\mathbb{Q}}
\newcommand{\num}{{\rm num}}
\newcommand{\denom}{{\rm denom}}
\newcommand{\vol}{{\rm vol}}
\newcommand{\guts}{{\rm guts}}
   \def\MR#1{}
\begin{document}

\title{Volumes of Montesinos links}
\author{Kathleen Finlinson}
\address{Department of Applied Mathematics, University of Colorado, Boulder, Colorado, USA}
\author{Jessica S.~Purcell}
\address{Department of Mathematics, Brigham Young University, Provo, Utah, USA, and 
School of Mathematical Sciences, Monash University, Clayton, VIC, Australia}

\begin{abstract} 

We show that the volume of any Montesinos link can be bounded above and below in terms of the combinatorics of its diagram. This was known for Montesinos links with at most two tangles, and those with at least five tangles. We complete the result for the remaining cases. 

\end{abstract}

\maketitle

\section{Introduction}\label{sec:intro}
W.~Thurston proved that if $K$ is a non-torus, non-satellite knot, then its complement $S^3 \setminus K$ admits a complete hyperbolic metric \cite{thurston1982}. This metric is unique up to isometry by the Mostow--Prasad rigidity theorem~\cite{Mostow, Prasad}. Therefore, the hyperbolic volume of a knot complement is a knot invariant. This paper studies the hyperbolic volume of Montesinos links.

Montesinos links are built out of rational tangles, whose definition and properties we review below. It is known that any Montesinos link made up of just one or two rational tangles is a 2--bridge link, and therefore it admits an alternating diagram. Volumes of alternating links can be bounded below due to work of Lackenby \cite{lackenby}. On the other hand, Futer, Kalfagianni, and Purcell found lower volume bounds for Montesinos links with at least three positive tangles (\cite[Theorem~8.6 and~9.1]{FKP}); by taking the mirror image, this also gives lower bounds on Montesinos links with at least three negative tangles. Together, this gives lower volume bounds on all Montesinos links with five or more tangles. However, until now, volume bounds for Montesinos links with three or four tangles were unknown.

In this paper, we finish the case of Montesinos links with three or four tangles. We give a lower bound on the volume of any such link in terms of properties of a diagram, which can easily be read off the diagram of the Montesinos link. Specifically, we show volume is bounded in terms of the Euler characteristic of a graph obtained from the diagram. This graph is the reduced $A$ or $B$--state graph $\G_\sigma'$, defined in Definition~\ref{def:G_A} in Section~\ref{sec:estimatingtheguts}. Our main result is the following.

\begin{thm} 
\label{thm:KJmain}
Let $K$ be a hyperbolic Montesinos link with a reduced, admissible diagram with at least three tangles. Then
\[
\vol(S^3 \setminus K) \geq v_8 ( \X ( \G'_\sigma) - 1).
\]
Here $v_8 \approx 3.6638$ is the hyperbolic volume of a regular ideal octahedron, and $\G'_\sigma$ is the reduced state graph of $D(K)$ corresponding to either the all--$A$ or all--$B$ state, depending on whether the diagram of $K$ admits two or more positive tangles, or two or more negative tangles, respectively.
\end{thm}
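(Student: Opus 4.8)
The plan is to reduce the volume lower bound to a lower bound on the \emph{guts} of the essential surface obtained by applying Agol--Thurston--style machinery to the checkerboard surface associated with the chosen state. Concretely, I would take the state surface $S_\sigma$ spanning $K$ coming from the all--$A$ or all--$B$ state (chosen according to the sign of the majority of the tangles, exactly as in the statement), and recall the general principle, developed by Agol, Storm, and W.~Thurston and adapted to state surfaces by Futer, Kalfagianni, and Purcell, that $\vol(S^3 \setminus K) \geq v_8 \,\|\guts(S^3 \cutalong S_\sigma)\|$, where $\|\cdot\|$ denotes a suitably normalized Euler characteristic of the guts (the hyperbolic part of the JSJ/characteristic decomposition of the surface complement). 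Thus the entire problem is transported from geometry to the topology of $S_\sigma$ and its complement, and the target inequality becomes the combinatorial estimate $\|\guts(S^3 \cutalong S_\sigma)\| \geq \X(\G'_\sigma) - 1$.

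The key steps, in order, would be: first, verify that for a reduced, admissible Montesinos diagram with at least three tangles, the relevant state surface $S_\sigma$ is essential (incompressible and boundary--incompressible) in the link complement, so that the Agol--Storm--Thurston bound applies; this should follow from the adequacy of the diagram in the chosen state, which I would establish from the combinatorics of rational tangles together with the admissibility and reducedness hypotheses. Second, I would identify the guts with the polyhedral decomposition of the surface complement and show that the upper and lower polyhedra contribute nontrivially only along the essential product annuli that must be cut out; the central computation is to show that after removing the characteristic product regions, the remaining guts has normalized Euler characteristic controlled from below by $\X(\G'_\sigma)$. Third, I would analyze precisely the essential product disks and annuli arising from the three-- and four--tangle cases — the configurations that were not covered by the five--or--more--tangle arguments of \cite{FKP} — and account for the finitely many ways such product regions can appear between adjacent tangles, each of which reduces the guts by a controlled, bounded amount, yielding the additive $-1$ correction in the statement.

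The main obstacle, and the genuinely new content relative to the prior work, is the second and third steps: controlling the essential product disks and annuli in the complement of $S_\sigma$ when there are only three or four tangles. With five or more tangles there is enough ``room'' that the product regions between tangles are forced to be simple and their contribution is easy to bound, but with three or four tangles adjacent rational tangles can interact, potentially creating additional essential product annuli that shrink the guts. The heart of the proof will therefore be a careful, case--by--case classification of these normal product regions in the small--tangle setting — essentially a finite enumeration organized by the reduced state graph $\G'_\sigma$ — to show that even after accounting for all of them, the guts still has normalized Euler characteristic at least $\X(\G'_\sigma) - 1$. I expect this enumeration, and the verification that no unexpected product annulus causes a larger loss, to be where the bulk of the technical effort lies.
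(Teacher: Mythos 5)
Your proposal follows essentially the same route as the paper: it invokes the Agol--Storm--Thurston guts bound together with the Futer--Kalfagianni--Purcell identification of $\guts(S^3 \cutalong S_\sigma)$ with $\X(\G'_\sigma) - ||E_c||$, verifies essentiality via adequacy and primality of the reduced admissible diagram, and correctly locates the real work in a finite case analysis of essential product disks in the three-- and four--tangle configurations. This is exactly what the paper does, showing that any two complex EPDs in the upper polyhedron differ only by white bigons and hence a spanning set needs at most one complex disk, which is precisely the source of the additive $-1$.
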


The definitions of reduced and admissible diagrams are given in Section~\ref{sec:montesinoslinks}, Definitions~\ref{def:reduced} and~\ref{def:admissibleMont}, respectively. Every Montesinos link admits such a diagram.
The notation $\X(\cdot)$ denotes the negative Euler characteristic, defined to be
\[ \X(Y) = \sum \max\{-\chi(Y_i), 0\},\]
where the sum is over the components $Y_1, \dots, Y_n$ of $Y$. 

While Theorem~\ref{thm:KJmain} gives explicit diagrammatical bounds on volume, in many cases, we may estimate $\X(\G'_\sigma)$ in terms of the \emph{twist number} $t(K)$ of the diagram, which is even easier to read off of the diagram. The following theorem generalizes \cite[Theorem~9.12]{FKP}. 

\begin{thm}
\label{thm:KJ2}
Let $K$ be a Montesinos link that admits a reduced, admissible diagram with at least two positive tangles and at least two negative tangles, and suppose further that $K$ is not the $(2, -2, 2, -2)$ pretzel link. Then $K$ is hyperbolic, and 
\[
\frac{v_8}{4}(t(K)-\# K - 8) \leq \vol(S^3 \setminus K) \leq 2\,v_8\,t(K)
\]
where again $v_8 \approx 3.6638$ is the hyperbolic volume of a regular ideal octahedron, $t(K)$ is the twist number of the diagram, and $\#K$ is the number of link components of $K$.
\end{thm}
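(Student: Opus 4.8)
The plan is to establish, in turn, the three assertions of the theorem: that $K$ is hyperbolic, the lower volume bound, and the upper volume bound.

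\emph{Hyperbolicity.} Since $K$ has at least two positive and at least two negative tangles, its reduced admissible diagram has at least four tangles. By the classification of non-hyperbolic Montesinos links, a Montesinos link with at least three tangles fails to be hyperbolic only for a short, explicit list of exceptional (Seifert fibered or toroidal) links, and among those carrying both positive and negative tangles the only one meeting our hypotheses is the $(2,-2,2,-2)$ pretzel link. As this link is excluded by assumption, $K$ is hyperbolic and $\vol(S^3 \setminus K)$ is a well-defined invariant.

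\emph{Lower bound.} Because the diagram is reduced, admissible, and has at least three tangles, Theorem~\ref{thm:KJmain} applies and gives $\vol(S^3\setminus K) \geq v_8(\X(\G'_\sigma) - 1)$, where we are free to take $\G'_\sigma$ to be the reduced all--$A$ state graph (there being at least two positive tangles) or the reduced all--$B$ state graph (there being at least two negative tangles). It therefore suffices to prove the purely combinatorial estimate
\[
\X(\G'_\sigma) \;\geq\; \tfrac14\big(t(K) - \# K - 4\big),
\]
since substituting this into Theorem~\ref{thm:KJmain} yields exactly the claimed bound. To prove it, I would write $t(K) = t_+ + t_-$ as a sum over positive and negative twist regions and analyze the contribution of each twist region to the reduced state graph of Definition~\ref{def:G_A}. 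Within each rational tangle the twist regions alternate in character (horizontal versus vertical), and under the all--$A$ smoothing the twist regions of one sign collapse to single edges joining distinct state circles while those of the other sign merge state circles; the reduction step of Definition~\ref{def:G_A} then replaces each bundle of parallel edges by a single edge. Counting the surviving edges against the number of state circles, and tracking the loops so created, produces a lower bound for $\X(\G'_\sigma)$ that is linear in $t(K)$ with the stated constants: the $-\#K$ term accounts for state circles that merge across tangle boundaries, and the additive constant absorbs the small graphs coming from the extremal tangles.

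\emph{Upper bound.} Following the augmentation argument of \cite{FKP}, insert a crossing circle around each of the $t(K)$ twist regions to form the augmented link $L$, so that $S^3 \setminus K$ is obtained from $S^3 \setminus L$ by Dehn filling the crossing-circle cusps (reinserting the removed full twists). By Thurston's theorem that Dehn filling strictly decreases volume \cite{thurston1982}, $\vol(S^3 \setminus K) \leq \vol(S^3 \setminus L)$. The complement of the augmented link decomposes into right-angled ideal polyhedra whose volume is bounded, crossing circle by crossing circle, by that of regular ideal octahedra; since there are $t(K)$ crossing circles, this gives $\vol(S^3 \setminus L) \leq 2\,v_8\, t(K)$, and the upper bound follows.

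\emph{Main obstacle.} I expect the combinatorial estimate in the lower bound to be the crux. The difficulty lies not in any single twist region but in the global bookkeeping: controlling how state circles are shared between adjacent tangles, and how the edge-reduction of Definition~\ref{def:G_A} interacts with the alternation of twist-region types, so that the per-twist-region contributions aggregate to the clean constant $1/4$ together with the correct $\#K$ and additive corrections. Verifying that the extremal cases (tangles with a single crossing, or length-one and length-two twist regions producing bigons) do not violate the inequality, and confirming that excluding the $(2,-2,2,-2)$ pretzel link is precisely what is needed for hyperbolicity, are the places where care is required.
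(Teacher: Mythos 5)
Your hyperbolicity step and your upper bound are in reasonable shape (the paper handles the upper bound differently, by drilling one circle per tangle and using belted sums following \cite{FKP} and \cite{adams:3-punct}, but your fully-augmented-link argument is a standard valid alternative), and your reduction of the lower bound is numerically exact: substituting $\X(\G'_\sigma) \geq \tfrac14(t(K)-\#K-4)$ into Theorem~\ref{thm:KJmain} does give $\tfrac{v_8}{4}(t(K)-\#K-8)$. The genuine gap is that this combinatorial estimate, as you state it and as you propose to prove it --- for one fixed state graph, by bookkeeping of twist regions inside that single graph --- is \emph{false}. Take $K$ to be the Montesinos link with slopes $(2/3,\,2/3,\,-2/3,\dots,-2/3)$, with $m \geq 4$ negative tangles, so every tangle is $[0,\pm 1,\pm 2]$. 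In $H_A$, each negative tangle of slope $-2/3$ consists of one segment from the north state circle to an interior circle and two parallel segments from that interior circle to the south state circle; since the north and south arcs of a negative tangle lie on the \emph{same} state circle of the closed diagram, all three segments join the same pair of vertices of $\G_A$ and collapse to a single edge of $\G'_A$. A direct count then gives $\X(\G'_A)=1$ for every $m$ (the $A$--graph has $m+4$ vertices and $m+5$ edges), while $t(K)=2m+4$ and $\#K=2$, so $\X(\G'_A) < \tfrac14(t(K)-\#K-4)$ once $m \geq 4$. The all--$A$ graph simply does not see the twist regions buried in the negative tangles, so no per-twist-region analysis of $\G'_A$ alone can yield your inequality; symmetrically, links with many positive tangles defeat the fixed choice $\sigma = B$.

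What is true --- and is how the paper proceeds --- is a bound on the \emph{sum}: Lemmas~\ref{lem:twistnumber} and~\ref{lemma:9.11} give $-\chi(\G'_A)-\chi(\G'_B) \geq t(K) - Q_{1/2}(K) - 2 \geq \tfrac12(t(K)-\#K-4)$, which forces the better of the two graphs to satisfy your quarter-estimate. Proving this requires an intrinsically two-resolution argument: the paper embeds $\G_A$ and $\G_B$ as dual graphs on the Turaev surface (a torus), giving $v_A - e_A + v_B = 0$, and controls the edges lost under reduction in both graphs simultaneously, with the $Q_{1/2}$ term accounting for the $2$--edge loops your single-graph count misses. Note also that the paper's proof does not route through Theorem~\ref{thm:KJmain} at all: it applies the Agol--Storm--Thurston inequality to $S_A$ and $S_B$ at once, obtaining $\vol(S^3\setminus K) \geq \tfrac{v_8}{2}\left(\chi_-\guts(S^3\cutalong S_A)+\chi_-\guts(S^3\cutalong S_B)\right)$, and then uses $||E_c||\leq 1$ on each side. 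Your route, Theorem~\ref{thm:KJmain} applied to whichever of $\sigma=A,B$ realizes the maximum, yields the same constant, so the proposal is repairable --- but only by replacing your single-graph counting with the sum/duality argument and then choosing $\sigma$ accordingly; that choice, and the comparison of the two resolutions it requires, is exactly the step your outline is missing.
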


\subsection{Outline of Proof}
\label{sec:OutlineofProof}

Theorem~\ref{thm:KJmain} is proved by applying results in \cite{FKP}, but we will restate the relevant results in this paper to keep this paper self-contained. 
In \cite{FKP}, using the guts machinery of Agol, Storm, and Thurston \cite{AST}, it is shown that volumes of many links, including hyperbolic Montesinos links with at least two positive or two negative tangles, 
can be bounded below by identifying \emph{complex essential product disks (EPDs)} in the link complement, as defined in Definition~\ref{def:simplesemicomplex}. In particular, \cite[Theorem~9.3]{FKP} states that for diagrams of links satisfying particular hypotheses, which  include the Montesinos links of this paper, we have the following estimate:
\begin{equation}\label{eqn:FKPVol}
\vol(S^3\setminus K) \geq v_8\, (\X(\G'_A - ||E_c||),
\end{equation}
where $||E_c||$ is the number of complex essential product disks. 

In this paper, we show that for a Montesinos link with three or four tangles, the existence of a complex EPD leads to restrictions on the diagram. These restrictions, in turn, imply that at most one complex EPD may exist. This implies Theorem~\ref{thm:KJmain}. 

\subsection{Organization}
This paper is organized as follows. In Section~\ref{sec:montesinoslinks}, we review the definitions of rational tangles and Montesinos links. We will need to work with particular diagrams of these links, and we prove such diagrams exist and are prime. In Section~\ref{sec:estimatingtheguts}, we recall definitions of $A$--adequacy, and techniques from \cite{FKP} that can be applied to $A$--adequate links to give a polyhedron whose combinatorial description is determined by the diagram. We review these results and apply them to the Montesinos links of interest. Section~\ref{sec:epds} contains the main technical results in the paper. Given a polyhedron for a Montesinos link, we search for complex EPDs that lie in the polyhedron. These are found by analyzing the combinatorics of the diagram, and working through several cases. Finally, in Section~\ref{sec:proofs}, we put the results together to give the proofs of Theorems~\ref{thm:KJmain} and~\ref{thm:KJ2}. 

\subsection{Acknowledgments}
We aknowledge support by the National Science Foundation under grant number DMS--125687. We also thank David Futer and Efstratia Kalfagianni for helpful conversations.

\section{Tangles and Montesinos links}\label{sec:montesinoslinks}
In this section, we recall the definitions of rational tangles and Montesinos links, and various properties of their diagrams that we will use in the sequel. Throughout, if $K$ is a link in $S^3$, then $D(K)=D$ is the corresponding link diagram in the plane of projection, and we will assume that $D$ is connected. 

\subsection{Rational Tangles}
\label{sec:rationaltangles}

A \emph{rational tangles} is obtained by drawing two arcs of rational slope on the surface of a pillowcase, and then pushing the interiors into the $3$--ball bounded by the pillowcase. Rational tangles have been studied in many contexts, for example see \cite{murasugi}. We record here some well--known facts.

A rational number can be described by a continued fraction:
\[
\frac{p}{q} = [a_n, a_{n-1}, \hdots, a_1] =  a_n + \cfrac{1}{a_{n-1} + \cfrac{1}{\ddots \, + \cfrac{1}{a_1}}}.
\]

\begin{figure} 
\centering
\begin{tabular}{cccc}
\includegraphics[height=1in]{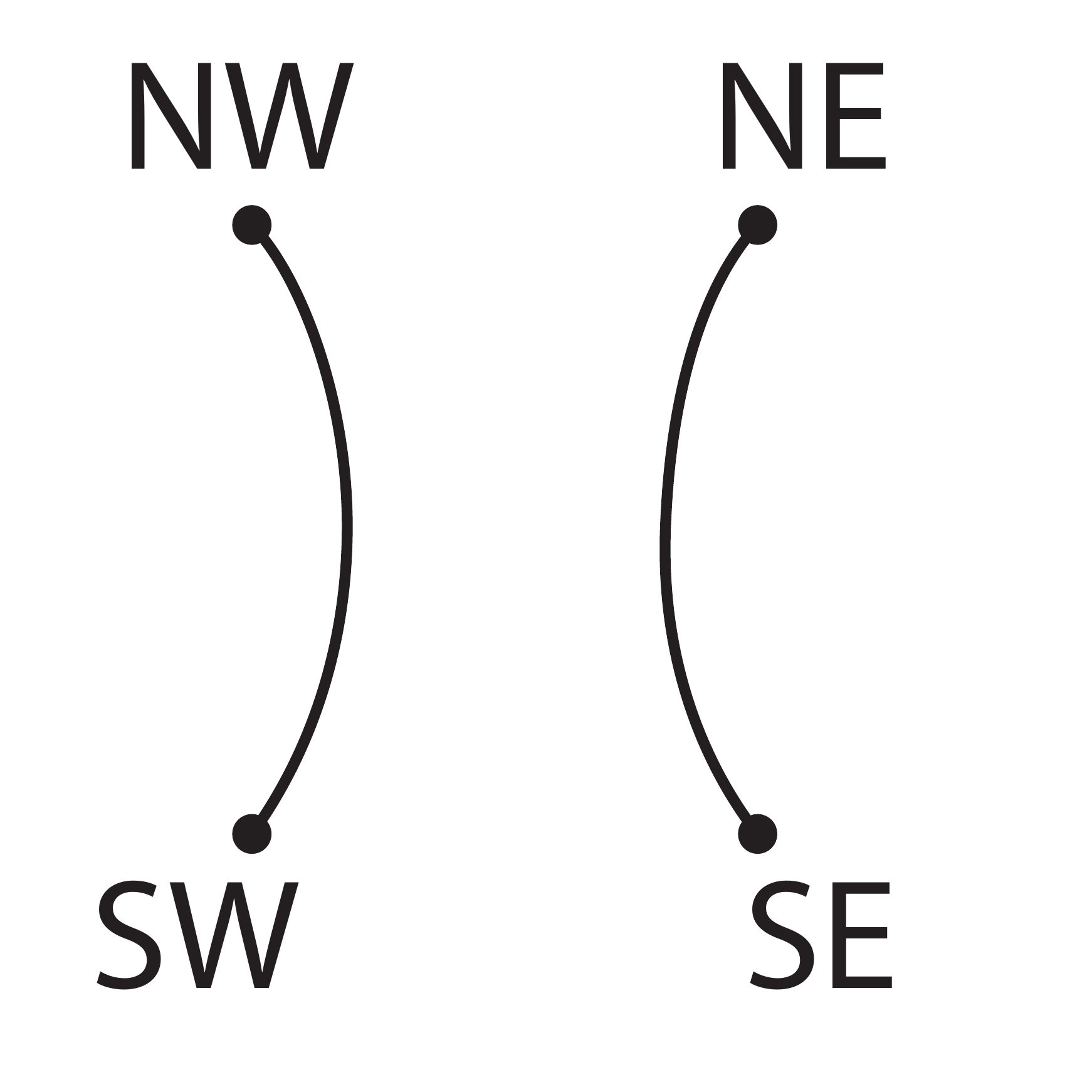} &
\includegraphics[height=1in]{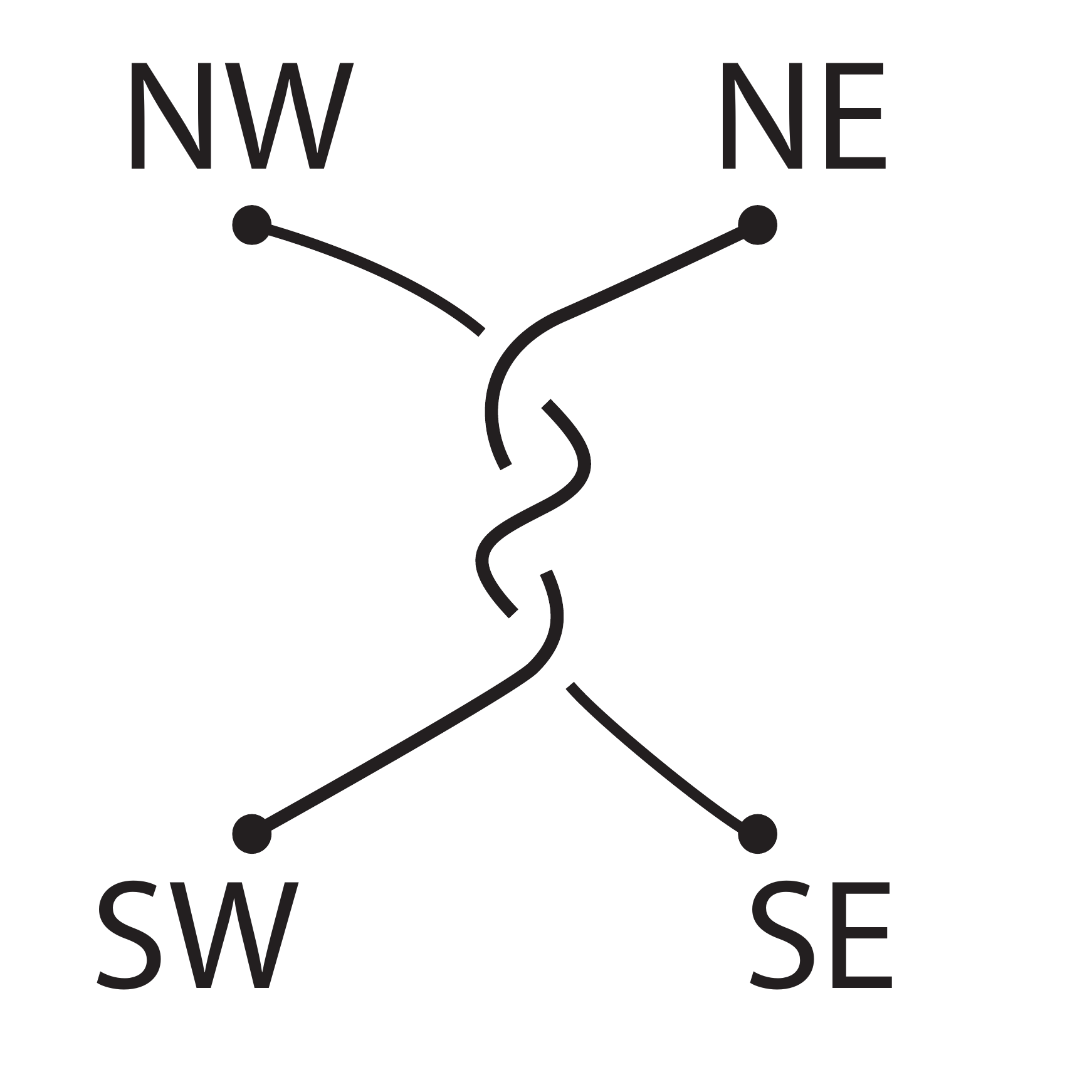} &
\includegraphics[height=1in]{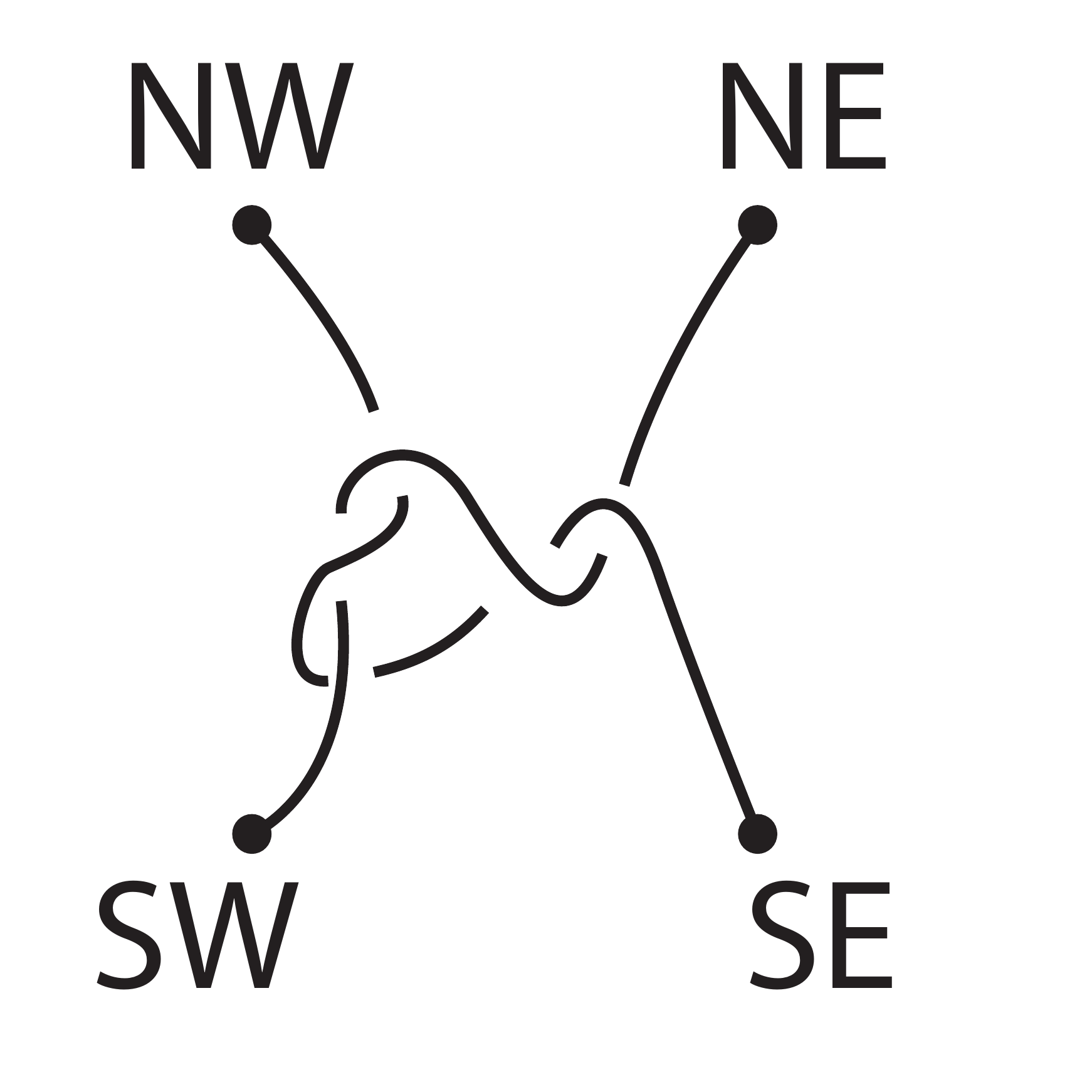} &
\includegraphics[height=1in]{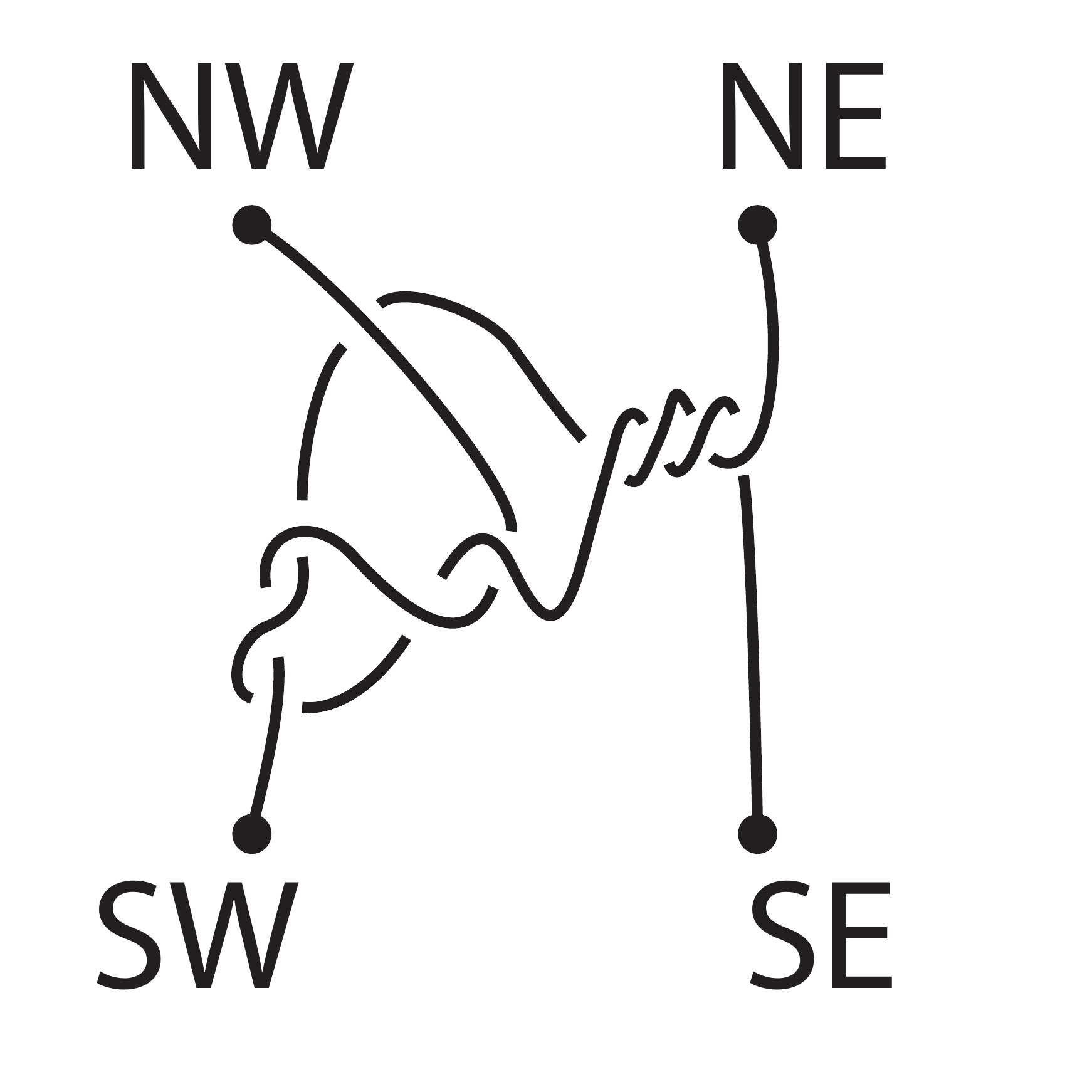} \\
(a) & (b) & (c) & (d)
\end{tabular}
\caption{Building a rational tangle from the continued fraction $[4, -1, -2, 3]$}
 \label{fig:RatTangle1}
\end{figure}

A continued fraction $[a_n, \dots, a_1]$ defines a rational tangle as follows. 
Label the four points on the pillowcase NW, NE, SW, and SE.
If $n$ is even, connect these points by attaching two arcs $c_1$ and $c_2$ connecting NE to SE and NW to SW as in Figure~\ref{fig:RatTangle1}(a). Perform a homeomorphism of $B^3$ that rotates the points NW and NE $|a_1|$ times, twisting the two arcs to create a vertical band of crossings. The crossings will be postive or negative depending on the direction of twist, which is determined by the sign of $a_1$. In Figure~\ref{fig:RatTangle1}(b), three positive crossings have been added. After twisting, relabel the points NW, NE, SW, and SE in their original orientation. Now perform a homeomorphism of $B^3$ to rotate NE and SE $|a_2|$ times, adding positive or negative crossings in a horizontal band with sign corresponding to $a_2$. Repeating this process for each $a_i$, we obtain a rational tangle.

If $n$ is odd, start by using two arcs to connect NW to NE and SW to SE. In this case we add a horizontal band of crossings first, and then continue as before, alternating between horizontal and vertical bands for each $a_i$.

Any rational tangle may be built by this process. As a convention, we require that $a_n$ always corresponds to a horizontal band of crossings. Thus if we build a rational tangle ending with a vertical band, as in Figure~\ref{fig:RatTangle1}(b), we insert a $0$ into the corresponding continued fraction, representing a horizontal band of $0$ crossings. For example, the continued fraction corresponding to the tangle in Figure \ref{fig:RatTangle1}(b) is $[0, 3]$. This convention ensures that any continued fraction completely specifies a single rational tangle. The tangle shown in Figure~\ref{fig:RatTangle1}(a) has continued fraction expansion $\infty = [0,0] = 0+\frac{1}{0}$. 

\begin{prop}[Conway \cite{conway1970enumeration}]
  \label{prop:Conway}
Equivalence classes of rational tangles are in one--to--one correspondence with the set $\QQ \cup \infty$. In particular, tangles $T(a_n, \dots, a_1)$ and $T(b_m,\dots, b_1)$ are equivalent if and only if the continued fractions $[a_n, \dots, a_1]$ and $[b_m, \dots, b_1]$ are equal.
\end{prop}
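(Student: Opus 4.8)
The plan is to prove that the continued-fraction value is a \emph{complete} invariant of rational tangles by passing to the double branched cover. First I recall that the double cover of $B^3$ branched over the two arcs of a rational tangle is a solid torus $V$: this is clear for the two trivial tangles, and adding a band of twists is realized by a homeomorphism of $B^3$ preserving the arcs' endpoints, which lifts to the cover and preserves the property of being a solid torus. The double cover of the boundary pillowcase $(S^2, \{\mathrm{NW}, \mathrm{NE}, \mathrm{SW}, \mathrm{SE}\})$ branched over the four marked points is a torus $T = \partial V$ (its Euler characteristic is $2\cdot 2 - 4 = 0$), and the meridian of $V$ determines a slope $\mu \in \QQ \cup \infty$ once we declare the meridians arising from the $0$- and $\infty$--tangles to be the two coordinate slopes. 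Since a tangle equivalence is an isotopy of $B^3$ fixing the four endpoints, it lifts to a homeomorphism $V \to V'$ carrying meridian to meridian; hence $\mu$ depends only on the equivalence class of the tangle.

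Next I would identify this slope with the continued fraction. Adding one crossing to a horizontal (resp. vertical) band corresponds downstairs to a Dehn twist of $T$ about one of two fixed curves, and upstairs these twists act on the set of slopes as the standard unipotent generators $\left(\begin{smallmatrix}1 & 1\\ 0 & 1\end{smallmatrix}\right)$ and $\left(\begin{smallmatrix}1 & 0\\ 1 & 1\end{smallmatrix}\right)$ of $SL_2(\ZZ)$. Building $T(a_n, \dots, a_1)$ by the construction in the text therefore applies the product of such matrices with entries $a_i$ to the starting slope, and a direct computation identifies the result with $[a_n, \dots, a_1]$. This simultaneously gives surjectivity onto $\QQ \cup \infty$, since every rational number and $\infty$ has a continued-fraction expansion, and it shows that equivalent tangles carry equal continued fractions, which is one direction of the ``in particular'' clause.

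The main work is injectivity: if two rational tangles share the slope $\mu$, I must produce a tangle equivalence between them. Equality of slopes gives a homeomorphism $T \to T'$ matching meridians and the four branch points, and since a slope-preserving boundary homeomorphism of a solid torus extends over the interior, this yields a homeomorphism $V \to V'$. The delicate point---which I expect to be the main obstacle---is to arrange this extension to be \emph{equivariant} with respect to the involutions whose quotients recover the two tangles, so that it descends to a homeomorphism of pairs $(B^3, \text{arcs}) \to (B^3, \text{arcs}')$ fixing the pillowcase marking. I would handle this using the facts that any orientation-preserving self-homeomorphism of a torus commuting with the elliptic involution extends over the solid torus commuting with the corresponding involution, and that the involution realizing the quotient is unique up to isotopy. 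Descending the equivariant homeomorphism produces the required equivalence, completing both the bijection with $\QQ \cup \infty$ and the ``if and only if''.
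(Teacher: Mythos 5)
The paper does not prove this proposition at all: it is quoted as a classical result, attributed directly to Conway with a citation, and nothing later in the paper depends on how it is proved (the authors only use it to normalize tangle diagrams). So there is no in-paper argument to compare yours against; you have supplied a proof of a statement the authors take as known. On its own merits, your sketch follows the standard modern route --- the double cover of $B^3$ branched over a rational tangle is a solid torus, the meridian slope in $\QQ\cup\infty$ is the invariant, and adding crossings acts through the unipotent generators of $SL_2(\ZZ)$, identifying the slope with the continued fraction. This is a legitimate and completable proof, and it is genuinely different from Conway's original, largely combinatorial argument (and from the later flype-based combinatorial proofs): what the covering-space approach buys is a conceptual explanation of why $\QQ\cup\infty$ appears, at the cost of importing nontrivial equivariant $3$--manifold topology.

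Two caveats on the injectivity step, which you rightly flag as the delicate point. First, your stated extension fact is false as literally phrased: the elliptic involution is central in the mapping class group of the torus, so \emph{every} orientation-preserving self-homeomorphism commutes with it up to isotopy, and your claim would then say every such homeomorphism extends over the solid torus --- which fails, e.g., for the map exchanging meridian and longitude. What you need, and what is true, is that a \emph{meridian-preserving} homeomorphism of boundary tori extends over the solid tori, and that the extension can be chosen equivariantly. Second, the equivariance itself is the real mathematical content: it rests on the uniqueness, up to isotopy, of an orientation-preserving involution of the solid torus with two-arc fixed set restricting to the hyperelliptic involution on the boundary. This follows from the equivariant loop/Dehn theorems or from the classification of involutions of the solid torus, and should be cited as such rather than invoked as a ``fact.'' Finally, descending the equivariant homeomorphism gives a homeomorphism of pairs fixing the boundary sphere; to conclude the tangles are \emph{equivalent} (isotopic rel boundary) you still need the Alexander trick to convert that homeomorphism into an isotopy. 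None of these is a fatal gap, but all three need to be filled to make the sketch a proof.
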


Using Proposition~\ref{prop:Conway}, we can put all our tangles into nice form. In particular, if a rational tangle corresponds to a positive rational number, then we can ensure its continued fraction expansion consists only of nonnegative integers. Similarly, if the tangle corresponds to a negative rational number, we can ensure the continued fraction expansion consists of nonpositive numbers. Thus in this paper, positive tangles have only positive crossings, and negative tangles have only negative crossings. 
This proves that we may divide all non-trivial rational tangles into two groups: \emph{positive tangles} and \emph{negative tangles}. In either case, the tangle has an alternating diagram.

In addition, we may require for a continued fraction with $n$ integers that $a_i \neq 0$ for all $i < n$. 

In the description of building rational tangles, we added vertical bands of crossings by rotating the points NW and NE, inserting the vertical band on the north of the tangle. Notice that we could have rotated SW and SE instead, adding a vertical band of crossings on the south of the tangle. These two methods are equivalent by a sequence of flypes. Likewise, we may add each horizontal band of crossings either on the west side of the tangle (by rotating the points NW and SW), or on the east side of the tangle (by rotating the points NE and SE). The following definition ensures a consistent choice. 

\begin{definition} 
\begin{enumerate}
\item[(a)] If $T$ is a positive tangle, then an alternating diagram for $T$ is \emph{admissible} if all the vertical bands of crossings were added by rotating the points NW and NE, and all the horizontal bands of crossings were added by rotating the points NE and SE. See Figure~\ref{fig:admissibleTangle}, left.
\item[(b)] If $T$ is a negative tangle, then an alternating diagram for $T$ is \emph{admissible} if all the vertical bands of crossings were added by rotating the points NW and NE, and all the horizontal bands of crossings were added by rotating the points NW and SW. See Figure~\ref{fig:admissibleTangle}, right.
\end{enumerate}
\label{def:admissible}
\end{definition}

\begin{figure}
  \centering
\begin{tabular}{ccc}
\input{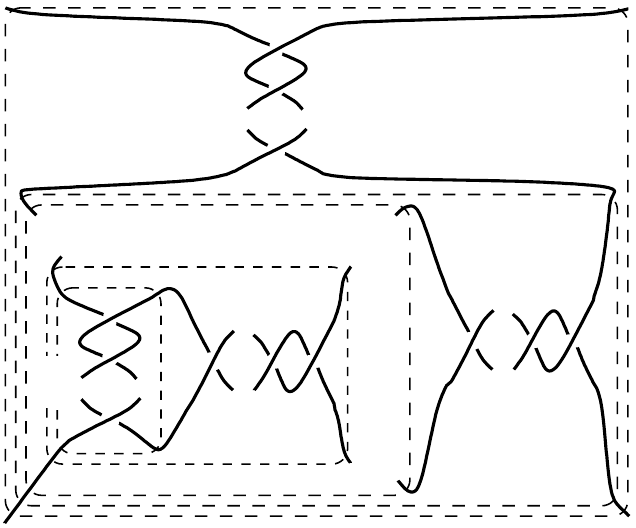_t} & \hspace{.1in} &
\input{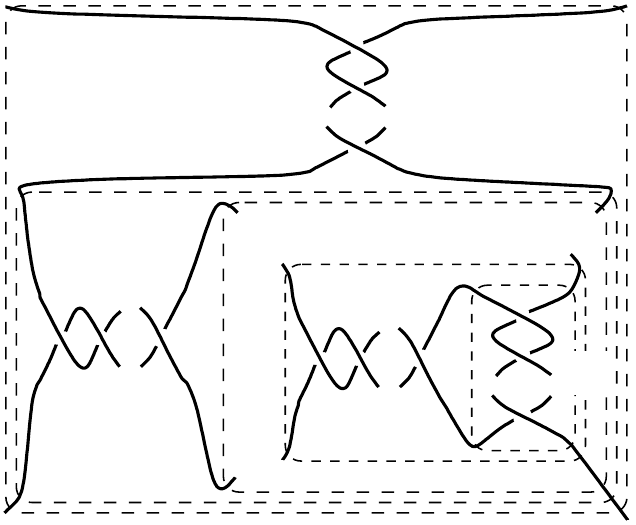_t}
\end{tabular}
\caption{Shown are general forms of admissible tangles, positive on left, negative on right, for $n$ even, and $a_n=0$. (For $n$ odd, the band of $a_1$ crossings will be horizontal, $a_2$ vertical, etc. The band of $a_{n-1}$ crossings will be vertical in all cases.)}
\label{fig:admissibleTangle}
\end{figure}

By a sequence of flypes, any non-trivial tangle has an admissible diagram.

\subsection{Montesinos Links}
\label{subsec:montesinoslinks}

Recall that the \emph{numerator closure} $\num(T)$ of a tangle $T$ is formed by connecting NW to NE and SW to SE by simple arcs with no crossings. The \emph{denominator closure} $\denom(T)$ is formed by connecting NW to SW and NE to SE by simple arcs with no crossings.

Given two rational tangles $T_1$ and $T_2$ with slopes $q_1$ and $q_2$, we form their \emph{sum} by connecting the NE and SE corners of $T_1$ to the NW and SW corners of $T_2$, respectively, with two disjoint arcs. If $q_1$ or $q_2$ is an integer, then the sum  $T_1 + T_2$ is also a rational tangle; this is called a \emph{trivial} sum. 

The \emph{cyclic sum} of $T_1, \hdots, T_r$ is the numerator closure of the sum $T_1 + \hdots + T_r$.

\begin{definition}\label{def:Montesinos}
A \emph{Montesinos link} is the cyclic sum of a finite ordered list of rational tangles $T_1, \hdots, T_r$. See Figure~\ref{fig:MonLink}.
\end{definition}

\begin{figure}
\centering
\input{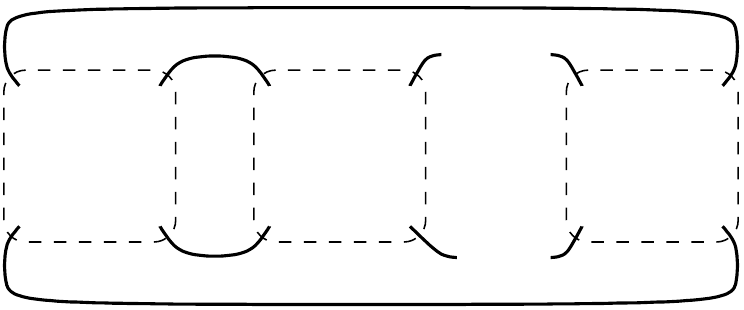_t}
\caption{A Montesinos link with $r$ tangles}
\label{fig:MonLink}
\end{figure}

A Montesinos link is determined by the integer $r$ and an $r$-tuple of slopes $q_1$, $\hdots$, $q_r$, with $q_i \in \Q \cup \infty$. Note that if $q_i = \infty$ for some $i$, the link is disconnected. If $q_i$ is an integer, then it consists of a single band of horizontal crossings, which can be subsumed into an adjacent rational tangle in a sum. 
Thus we assume that $q_i \notin \Z \cup \infty$, to avoid trivial or disconnected sums.

\begin{thm}[Theorem~12.8 of \cite{bonahon2010}]\label{thm:bonahon}
Let $K$ be a Montesinos link obtained as the cyclic sum of $r \geq 3$ rational tangles whose slopes are $q_1, \hdots ,q_r \in \Q \setminus \Z$. Then $K$ is determined up to isomorphism by the rational number $\sum_{i=1}^{r}q_i$ and the vector $((q_1 \mod1), \hdots,(q_r \mod 1))$, up to dihedral permutation.
\end{thm}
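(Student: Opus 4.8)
The plan is to pass to the double cover of $S^3$ branched over $K$, apply the classification of Seifert fibered spaces, and then descend the result back to the link. Since the statement asserts that the listed data is a complete invariant, there are two things to establish, and I would separate them. The elementary half is that two tuples $(q_1,\dots,q_r)$ and $(q_1',\dots,q_r')$ sharing the same sum and the same fractional-part vector (up to dihedral permutation) yield isomorphic links. I would realize this by exhibiting two families of isotopies of the necklace diagram in Figure~\ref{fig:MonLink}: (i) the rotational and reflective symmetries of the necklace, which realize any dihedral permutation of $(T_1,\dots,T_r)$; and (ii) sliding an integral horizontal band along a connecting arc of the cyclic sum, which replaces an adjacent pair $(q_i,q_{i+1})$ by $(q_i+m,\,q_{i+1}-m)$ for $m\in\Z$. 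A short check shows that the transformations of $(q_1,\dots,q_r)$ preserving both $\sum_i q_i$ and the vector $(q_i \bmod 1)$ up to dihedral permutation are exactly those generated by (i) and (ii), since any zero-sum integer correction vector is a composite of unit transfers between adjacent indices.

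The substantive half is faithfulness: that $\sum_i q_i$ and the vector $(q_i \bmod 1)$, up to dihedral symmetry, can be recovered from the isomorphism type of $K$. For this I would form the double branched cover $\Sigma = \Sigma_2(K)$. Writing $q_i = \beta_i/\alpha_i$ in lowest terms with $\alpha_i \geq 2$ (using $q_i \in \Q\setminus\Z$), each rational tangle lifts to a fibered solid torus that is a neighborhood of an exceptional fiber of type $(\alpha_i,\beta_i)$, and the cyclic-sum gluing assembles these into a Seifert fibration of $\Sigma$ over the orbifold $S^2$ with cone points of orders $\alpha_1,\dots,\alpha_r$. I would record the unnormalized Seifert data: the multiset of fiber types $\{(\alpha_i,\beta_i)\}$, whose classes modulo $\alpha_i$ encode the numbers $q_i \bmod 1$, together with the rational Euler number, which computes to $-\sum_i q_i$. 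Because $r \geq 3$ and every $\alpha_i \geq 2$, the base orbifold has non-positive Euler characteristic, so $\Sigma$ is a ``large'' Seifert fibered space: its fibration is unique up to isotopy, and by the classification of Seifert fibered spaces the unnormalized data above is a complete invariant of $\Sigma$ up to fiber-preserving homeomorphism. An isomorphism $K \cong K'$ thus forces an isomorphism of the Seifert data, recovering $\sum_i q_i$ and the multiset $\{q_i \bmod 1\}$.

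The last step — upgrading the recovered \emph{multiset} to the \emph{vector up to dihedral permutation}, and recovering the link rather than merely $\Sigma$ — is where the real difficulty lies. The branch involution $\tau$ with $\Sigma/\tau = (S^3,K)$ can be isotoped to be fiber-preserving, whereupon it descends to a reflection of the base orbifold $S^2$ fixing a circle that passes through all $r$ cone points; this reflection circle furnishes a cyclic ordering of the cone points, and hence of the fiber data, well defined precisely up to the dihedral symmetry of the circle. The main obstacle is controlling this involution up to conjugacy, so that the combinatorial cyclic arrangement of the tangles, and not just the unordered Seifert data, is pinned down. This amounts to a uniqueness statement for the relevant reflection-type involution on a Seifert fibered space carrying the given fibration, and it is exactly here that the hypothesis $r \geq 3$ is essential: it excludes the small Seifert fibered spaces (lens spaces and the $r \leq 2$ cases, where $K$ is a $2$--bridge link) whose extra symmetries would permit inequivalent branch loci and thus break the classification.
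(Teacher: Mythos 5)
The paper itself offers no proof of this statement: it is imported wholesale as Theorem~12.8 of \cite{bonahon2010}, so your proposal can only be measured against the classical argument in the literature, whose architecture (double branched cover, Seifert fibration, analysis of the covering involution) you have correctly reproduced in outline, and whose elementary half (realizing dihedral permutations and integral transfers $(q_i,q_{i+1})\mapsto(q_i+m,q_{i+1}-m)$ by isotopies of the necklace diagram) you handle correctly. The first genuine gap is your claim that $r\geq 3$ together with $\alpha_i\geq 2$ forces the base orbifold to have non-positive Euler characteristic. This is false: the orbifolds $S^2(2,2,n)$, $S^2(2,3,3)$, $S^2(2,3,4)$, $S^2(2,3,5)$ are spherical, e.g.\ $\chi^{\mathrm{orb}}\bigl(S^2(2,2,n)\bigr)=1/n>0$, and they arise from honest Montesinos links such as the $(2,2,n)$ and $(2,3,5)$ pretzel links. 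Precisely in these cases the uniqueness of the Seifert fibration you invoke can fail: prism manifolds, which fiber over $S^2(2,2,n)$, admit a second fibration over $\mathbb{R}P^2$ with at most one exceptional fiber, and the flat manifold fibering over $S^2(2,2,2,2)$ with Euler number zero (the double cover of the $(2,-2,2,-2)$ pretzel link) also fibers over the Klein bottle. The hypothesis $r\geq 3$ excludes lens spaces, but not these exceptional manifolds, so the step ``the unnormalized Seifert data is a complete invariant of $\Sigma$'' does not go through uniformly and the spherical and Euclidean cases need separate treatment.

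The second and more fundamental gap is the step you yourself flag as ``where the real difficulty lies'': that the covering involution $\tau$ can be isotoped to be fiber-preserving and is then unique up to conjugacy, so that the fixed circle of the induced reflection of the base pins down the cyclic order of the cone points. This is not a loose end to be controlled; it is the actual content of the theorem, and your proposal reduces to it without proving it. Making $\tau$ fiber-preserving requires equivariant machinery (the Meeks--Scott equivariant Seifert fibration theorem, or the orbifold theorem), and the uniqueness of $\tau$ up to conjugacy demands a case analysis that is most delicate exactly in the small Seifert manifolds of the previous paragraph, where extra symmetries and second fibrations exist. As written, your argument establishes realizability, correctly identifies the Seifert data one hopes to extract, and then replaces the heart of the proof by an assertion that such an analysis can be done; that is a correct road map of the Bonahon--Siebenmann proof, but it is not a proof.
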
 
 
Note that this theorem gives isomorphism up to dihedral permuation; however we will only use isomorphism up to cyclic permutation. By Theorem~\ref{thm:bonahon}, given $K$ as the cyclic sum of $T_1, \hdots, T_r$, we can ``combine'' the integer parts of $q_1, \hdots,q_r$. The following definition makes use of this fact. 
 
\begin{definition}\label{def:reduced}
A diagram $D(K)$ is called a \emph{reduced Montesinos diagram} if it is the cyclic sum of the diagrams $T_i$, and for each $i$, the diagram of $T_i$ has either all positive or all negative crossings, and either
\begin{enumerate}
\item[(1)] all the slopes $q_i$ of tangles $T_i$ have the same sign, or
\item[(2)] $0<|q_i|< 1$ for all $i$.
\end{enumerate}
\end{definition}
 
It is not hard to see that every Montesinos link with $r \geq 3$ has a reduced diagram. For example, if $q_i < 0$ while $q_j > 1$, one may add $1$ to $q_i$ and subtract $1$ from $q_j$. By Theorem~\ref{thm:bonahon}, this does not change the link type. One may continue in this manner until condition (1) of Definition \ref{def:reduced} is satisfied.

We make one more definition.

\begin{definition}\label{def:admissibleMont}
A diagram $D(K)$ of the cyclic sum of $T_1, \hdots, T_r$ is an \emph{admissible Montesinos diagram} if the diagram of $T_i$ is an admissible tangle diagram for each $i$.
\end{definition}

Since every tangle has an admissible diagram, every Montesinos link with $r\geq 3$ has a reduced, admissible diagram.

We will need to know that an admissible diagram of a Montesinos link is prime. Recall that a diagram is \emph{prime} if, for any simple closed curve $\gamma$ meeting the diagram graph transversely in exactly two edges, the curve $\gamma$ bounds a region of the projection plane with no crossings. 

\begin{prop}\label{prop:montesinosPrime}
A reduced admissible diagram of a Montesinos link with at least two (non-trivial) tangles is prime. 
\end{prop}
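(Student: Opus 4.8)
The plan is to localize primeness to the individual tangles and then exploit the cyclic two--strand gluing to rule out genuinely separating curves. Write $D = D(K)$ as the cyclic sum of the tangle diagrams $T_1, \dots, T_r$, placing each $T_i$ in a disk $B_i$ so that the $B_i$ are arranged cyclically and consecutive disks are joined by exactly two crossingless arcs (the sum arcs, and the two closure arcs joining $T_r$ back to $T_1$); call all of these the \emph{connecting arcs}. Since each $q_i \notin \Z$, every tangle is non--trivial, so each $B_i$ contains at least one crossing, and every crossing of $D$ lies inside some $B_i$.

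First I would establish the base case: each admissible rational tangle diagram is \emph{prime as a tangle}, in the sense that any simple closed curve contained in $B_i$ and meeting $T_i$ transversely in at most two edges bounds a crossingless region inside $B_i$. I would argue by induction on the length of the continued fraction $[a_n, \dots, a_1]$, equivalently on the number of twist regions. A single band of crossings is immediate, and the admissibility convention stacks each successive band on a prescribed side, so $T_i$ is a reduced alternating diagram built by successively adding twist regions; a two--edge curve is then either confined to the newest band or separates it from the rest, and in each case the reduced alternating structure forces the bounded region to carry no crossings. (Alternatively, the numerator closure of a rational tangle is a prime alternating $2$--bridge link, whose reduced alternating diagram is prime by Menasco's theorem, and one transfers this fact across $\partial B_i$.)

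The clean core of the argument is the case where $\gamma$ is disjoint from every tangle disk, so that both of its intersections with $D$ occur on connecting arcs. To enclose any nonempty block of consecutive tangles, a simple closed curve avoiding the $B_i$ must cross all four connecting arcs meeting that block on its two sides; since $\gamma$ meets $D$ in only two edges, neither complementary region of $\gamma$ can contain a full tangle disk. Working on $S^2$, the tangles therefore all lie on one side of $\gamma$, and the other side meets only connecting arcs, which carry no crossings. Hence $\gamma$ bounds a crossingless region, as required.

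It remains to treat curves $\gamma$ that pass through one or more tangle disks, and this bookkeeping is where I expect the main difficulty. After isotoping $\gamma$ to meet each $\partial B_i$ transversely and minimally, I would distribute the two intersection points of $\gamma$ with $D$ among the beads $B_i$ and the gaps between them. If both lie in a single bead, the base case finishes it; if $\gamma$ avoids all beads, the previous paragraph applies. The remaining configurations --- two intersection points in distinct beads, or one in a bead and one on a connecting arc --- are the obstacle. The strategy there is to cap off the arcs $\gamma \cap B_i$ using sub--arcs of $\partial B_i$ to form closed curves meeting a single tangle in at most two edges, apply the base case to conclude those arcs cut off crossingless pieces, and then feed the result back into the necklace--counting argument above, using non--triviality of every tangle (each contributes a crossing that must be accompanied by a full tangle disk on its side of $\gamma$) together with admissibility to keep the configuration along each $\partial B_i$ standard. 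Carrying out this reduction in every case shows that $\gamma$ always bounds a crossingless region, so $D(K)$ is prime.
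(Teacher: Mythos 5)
Your overall decomposition is sound, and two of your three cases are essentially fine: the ``necklace'' counting argument for a curve $\gamma$ disjoint from all tangle disks is correct (it parallels Case 1 of the paper's Lemma~\ref{lemma:tangleSumPrime}), and the case where both intersections with the diagram lie in a single bead reduces, as you say, to tangle primeness, which can be extracted from Menasco's theorem as in the paper's Lemma~\ref{lemma:tangleprime}. But the mixed configurations --- which you yourself flag as ``the obstacle'' --- are exactly where the mathematical content of the proposition lies, and your proposal does not prove them; it only names a strategy. Two specific difficulties are not resolved by that strategy as stated. First, if an arc of $\gamma$ traverses a bead $B_i$, meets $T_i$ once, and has its endpoints on \emph{opposite} sides of the square $\partial B_i$, then every cap by a sub-arc of $\partial B_i$ passes two corners, so the capped curve meets the tangle three times, and your base case (a statement about curves meeting a tangle in at most two edges) simply does not apply. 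The paper rules out this configuration by a parity argument: closing the arc up in $\denom(T_i)$ (or $\num(T_i)$) produces a closed curve meeting a link diagram an odd number of times, which is impossible (Case 4 of Lemma~\ref{lemma:tangleSumPrime}). Your sketch contains no substitute for this step. Second, even when the capped curve does bound a crossingless region, you must determine which side is crossingless and then either isotope $\gamma$ out of the bead or reach a contradiction; the paper does this with a connectedness argument (Figure~\ref{fig:primeTangle}): if the crossingless side were the one containing three of the four corners, the tangle diagram would be disconnected. ``Feed the result back into the necklace-counting argument'' glosses over both points, so the case of two intersection points in distinct beads, or one in a bead and one on a connecting arc, remains open in your write-up.

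It is also worth noting how the paper sidesteps the multi-bead bookkeeping that stalls your argument: it inducts on the number of tangles, treating the sum of the first $k$ tangles as a single connected tangle whose numerator closure is prime by the induction hypothesis, and then proves a two-tangle lemma (Lemma~\ref{lemma:tangleSumPrime}) whose five cases are each short. If you want to complete your direct, all-at-once argument, you will in effect need to reprove the paper's Cases 4 and 5 --- the corner-capping, the parity contradiction for opposite-side passages, and the connectedness contradiction --- for every bead the curve enters; the induction is a cleaner way to package exactly that work.
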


We will prove Proposition~\ref{prop:montesinosPrime} using two lemmas.

\begin{lemma}\label{lemma:tangleprime}
If $T$ is a reduced admissible diagram of a rational tangle with at least two crossings, then either $T$ is a single vertical band of crossings and $\denom(T)$ is prime, or $\num(T)$ is prime.
\end{lemma}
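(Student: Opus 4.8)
The plan is to argue by contradiction, analyzing how a curve $\gamma$ witnessing non-primality must interact with the alternating diagram of the numerator (or denominator) closure. Recall that $\num(T)$ is obtained from the admissible diagram of $T$ by joining NW to NE and SW to SE with crossingless arcs, and since $T$ is alternating (Proposition~\ref{prop:Conway} and the discussion following it), $\num(T)$ is an alternating diagram. First I would recall the standard fact that a connected, reduced alternating diagram is prime; the key word is \emph{reduced} (no nugatory crossings). So the strategy reduces to the following: show that $\num(T)$ is a connected reduced alternating diagram except precisely in the case where $T$ is a single vertical band, in which case $\num(T)$ has a nugatory crossing but $\denom(T)$ is instead reduced. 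I would split the argument according to whether the continued fraction $[a_n,\dots,a_1]$ ends (in the $a_n$ slot) with the horizontal band being genuinely nonzero or forced to be $a_n=0$.

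The key combinatorial observation is how the closure arcs sit relative to the outermost bands of the admissible tangle. By our convention $a_n$ always corresponds to a horizontal band, and in an admissible positive tangle horizontal bands are added on the NE--SE side while vertical bands are added on the NW--NE side (Definition~\ref{def:admissible}). When we form $\num(T)$ by connecting NW to NE and SW to SE, the outermost horizontal crossings (those of the $a_n$ band, when $a_n\neq 0$) are capped off in a way that does \emph{not} create a nugatory crossing, because the closure arc separates the two strands of each such crossing. The main step is therefore to verify that no crossing of $\num(T)$ is nugatory: a nugatory crossing would be one where a simple closed curve meets the diagram in a single point at that crossing. I would check that the only way the closure can produce such a curve is when the entire tangle is one vertical band (so $a_n=0$ and the only crossings are vertical, capped by the NW--NE and SW--SE arcs to form a chain of nugatory crossings); in every other admissible configuration the alternating structure forces each crossing to be flanked on both sides by further diagram, precluding nugatory crossings. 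In the exceptional vertical-band case, I would observe that $\denom(T)$, which joins NW to SW and NE to SE, instead caps the vertical band along its ends and yields a standard reduced alternating $(2,n)$-torus chain, hence prime.

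The main obstacle I anticipate is bookkeeping the two admissible conventions (positive versus negative tangles) together with the parity of $n$ and the $a_n=0$ convention, so that ``outermost band is horizontal and nonzero'' genuinely matches up with ``numerator closure is reduced.'' Concretely, I expect to need a careful picture-level check that the closure arcs of $\num(T)$ always run parallel to, rather than across, the outermost horizontal band, so that they never isolate a single crossing; the sign conventions of Definition~\ref{def:admissible} are exactly what guarantee this, and the delicate point is confirming it survives in the boundary case where the outermost horizontal band is absent but some interior vertical band is outermost in the plane. Once reducedness of the relevant closure is established, primeness follows immediately from the classical theorem that reduced alternating diagrams are prime, so the real content is entirely in the nugatory-crossing analysis and in correctly identifying the single vertical band as the unique exceptional case.
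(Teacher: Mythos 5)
There is a genuine gap: the ``standard fact'' on which your whole argument rests --- that a connected, reduced alternating diagram is prime --- is false. Reducedness (absence of nugatory crossings) is necessary for diagram primeness but not sufficient. The standard alternating diagram of the granny knot (the connected sum of two right-handed trefoils) is connected, alternating, and has no nugatory crossings, yet the simple closed curve separating the two trefoil summands meets the diagram transversely in exactly two edges and has three crossings on each side, so that diagram is not prime in the sense of this paper. The correct statement needs a hypothesis on the \emph{link}, not just the diagram: a connected, reduced alternating diagram of a \emph{prime} link is a prime diagram. (One way to see this: if such a diagram were composite, each side would cap off to an alternating diagram with at least one crossing; by the Kauffman--Murasugi--Thistlethwaite crossing-number theorem neither summand can be trivial, so the link itself would be composite.) Consequently, even after your careful nugatory-crossing analysis of $\num(T)$, respectively $\denom(T)$, you are still exactly one essential step short of the conclusion, and that step cannot be supplied by diagram combinatorics alone.

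The missing input is the identification of the underlying link type, which is where the paper's (very short) proof actually lives: if $T$ is a single vertical band then $\denom(T)$ is the standard alternating diagram of a $(2,q)$-torus link with $q>1$, and otherwise $\num(T)$ is an alternating diagram of a $2$-bridge link; these links are prime, and by Menasco's work \cite{menasco:incompress} the diagrams in question are then prime diagrams. Your case split (vertical band versus everything else, and which closure to take in each case) does match the paper's, and your observation that $\num(T)$ of a vertical band degenerates is correct; to repair the proposal, replace the false general fact with (i) the identification of the closure as a $2$-bridge or $(2,q)$-torus link, and (ii) the implication that a connected reduced alternating diagram of a prime link is a prime diagram, together with primeness of $2$-bridge and $(2,q)$-torus links. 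Items (ii) and the primeness of these links are precisely the content the paper outsources to Menasco.
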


\begin{proof}
If $T$ is a rational tangle with only a single vertical band of crossings, then the denominator closure of $T$ is a $(2,q)$--torus link, with $q>1$ by assumption on the number of crossings. Otherwise, the numerator closure of $T$ will be an alternating diagram of a 2--bridge link. By work of Menasco, in either case the diagram will be prime \cite{menasco:incompress}.
\end{proof}

\begin{lemma}\label{lemma:tangleSumPrime}
Suppose $T_1$ is a diagram of a connected non-trivial tangle such that either $\num(T_1)$ or $\denom(T_1)$ is prime, and suppose that $T_2$ is a reduced admissible diagram of a rational tangle with at least one crossing. Then $\num(T_1+T_2)$ is prime.
\end{lemma}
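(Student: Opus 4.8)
The plan is to argue by contradiction, using a Conway circle that isolates the $T_2$--block and relates $\num(T_1+T_2)$ to the closures of $T_1$ and $T_2$ separately. Suppose $D := \num(T_1+T_2)$ is not prime, so there is a simple closed curve $\gamma$ meeting $D$ transversely in exactly two edges and bounding crossings on both sides. Let $S$ be a circle in the plane of projection enclosing precisely the diagram of $T_2$; by construction $S$ meets $D$ in exactly four points, namely the two arcs of the tangle sum joining $T_1$ to $T_2$ and the two arcs of the numerator closure. The key observation is that the four points $S \cap D$ can be rejoined in pairs in two ways by a crossingless trivial tangle: one rejoining replaces the interior of $S$ (the $T_2$--block) and turns the exterior into $\num(T_1)$, while the other turns it into $\denom(T_1)$. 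Symmetrically, rejoining the exterior of $S$ turns the interior into $\num(T_2)$ or $\denom(T_2)$. By hypothesis one of $\num(T_1),\denom(T_1)$ is prime, and by Lemma~\ref{lemma:tangleprime} one of $\num(T_2),\denom(T_2)$ is prime (when $T_2$ has at least two crossings; the single--crossing case is immediate, as a two--edge curve cannot separate one crossing). Since these rejoinings introduce no new crossings, primeness transfers between $D$ and the capped diagrams.

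First I would dispose of the case $\gamma \cap S = \emptyset$, so that $\gamma$ lies entirely inside or entirely outside $S$. Suppose $\gamma$ lies outside $S$ and cap the interior by whichever trivial tangle makes the exterior the prime closure of $T_1$. Then $\gamma$ is a two--edge curve in a prime diagram, so it bounds a crossingless region $R$. If $R$ is disjoint from the interior of $S$, the same region is crossingless in $D$, contradicting our assumption. If $R$ contains the interior of $S$, then all crossings of $T_1$ lie in the complement of $R$; but each of the four strands of $D$ meeting $S$ runs from the interior of $S$ (inside $R$) out to $T_1$ (outside $R$), so each must cross $\gamma$, forcing $|\gamma \cap D| \ge 4$, a contradiction. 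The case $\gamma$ inside $S$ is handled identically, with the roles of $T_1$ and $T_2$ (and Lemma~\ref{lemma:tangleprime}) interchanged.

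The main work, and the step I expect to be the principal obstacle, is reducing the general situation to $\gamma \cap S = \emptyset$. Here I would take $\gamma$ realizing a non--prime decomposition with $|\gamma \cap S|$ minimal and run an innermost--arc argument: an outermost arc $\alpha$ of $\gamma$ cut off by $S$ cobounds, with a subarc $\beta \subset S$, a disk $E$ whose interior misses $S$. Because $\gamma$ meets $D$ in only two points and $S$ in four, the pattern of $D \cap E$ is highly constrained, and one shows that either $E$ is crossingless---so $\gamma$ can be isotoped across $E$ to decrease $|\gamma \cap S|$, contradicting minimality---or surgering $\gamma$ along $\beta$ yields a strictly simpler two--edge curve lying on one side of $S$, returning us to the disjoint case or directly to Lemma~\ref{lemma:tangleprime}. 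Keeping careful track of the two points of $\gamma \cap D$ and the four points of $S \cap D$ as they pass through these surgeries, and in particular ruling out that a two--edge curve swallows an entire tangle block by the four--strand count above, is the delicate part; the remainder of the argument is formal.
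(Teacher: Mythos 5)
Your first two paragraphs are sound, and the capping trick is genuinely nice: replacing the inside (resp.\ outside) of the Conway circle $S$ by a crossingless tangle to produce $\num(T_1)$ or $\denom(T_1)$ (resp.\ $\num(T_2)$ or $\denom(T_2)$), then using the four-strand count to rule out a crossingless region swallowing the whole block, cleanly unifies what the paper handles as Cases 1--3 (both points of $\gamma \cap D$ outside the tangles, both in $T_1$, or both in $T_2$). But your dichotomy is on $\gamma \cap S$, not on where $\gamma \cap D$ lies, so every curve with $\gamma \cap S \neq \emptyset$ --- in particular every curve with one intersection point inside a tangle and one outside, which is exactly where the paper spends its Cases 4 and 5 --- is deferred to your third paragraph, and that paragraph is a strategy, not a proof. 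The gap is real, and the mechanism you propose does not obviously close it: (a) if the outermost disk $E$ is crossingless but still meets $D$, isotoping $\gamma$ across $E$ changes $\gamma \cap D$ (the pushed arc picks up intersections with every strand of $D$ exiting $E$ through $\beta$), so the curve need not remain a two-edge curve; (b) surgering along $\beta$ changes $|\gamma \cap D|$ by up to $|\beta \cap D| \le 4$, and the surgered curve need not bound crossings on both sides, so you neither contradict minimality nor legitimately land in the disjoint case; and (c) even if the surgered curve does land in the disjoint case and is shown to bound no crossings on one side, nothing in your argument transfers that conclusion back to the original $\gamma$.

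The paper closes precisely these configurations with two arguments that your sketch has no counterpart for. When $\gamma$ exits the tangle square through \emph{adjacent} sides, it forms a closed curve $\gamma'$ from the arc of $\gamma$ inside the square plus a path along two sides through a corner, applies primeness of $\num(T_1)$ or $\denom(T_1)$ to $\gamma'$, and then either slides the intersection point out along a single crossingless strand (an isotopy that does preserve the two-edge property) or contradicts connectedness of the tangle. When $\gamma$ exits through \emph{opposite} sides (north--south or east--west), it closes the inside arc up in $\denom(T_1)$ (resp.\ $\num(T_1)$) to obtain a closed curve meeting that diagram exactly once, which is impossible by parity. Some parity argument of this kind appears unavoidable for the opposite-sides configuration --- no crossingless-disk or minimality argument rules it out, since such a $\gamma$ can be disjoint from all crossings --- so to complete your proof you would need to supply these cases (or genuine replacements), at which point the case analysis is essentially the paper's.
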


\begin{proof}
Let $D$ be the diagram of $\num(T_1+T_2)$ and suppose that $\gamma$ is a simple closed curve meeting $D$ exactly twice.

\textbf{Case 1.} The curve $\gamma$ meets $D$ outside of both tangles. Then since the diagram has no crossings outside the two tangles, either $\gamma$ bounds a portion of the diagram with no crossings on one side, or $\gamma$ encloses $T_1$ on one side, $T_2$ on the other. But in the latter case, $\gamma$ would have to meet $D$ four times, contradicting the fact that it meets $D$ exactly twice.

\textbf{Case 2.} The curve $\gamma$ meets $D$ twice in the tangle $T_1$. Then we may isotope $\gamma$ to be contained entirely in $T_1$, that is, within the Conway sphere enclosing $T_1$. Then $\gamma$ can be drawn into the numerator and denominator closures of $T_1$. One of these is prime; without loss of generality say $\num(T_1)$ is prime (otherwise rotate the diagram for the following argument). The curve $\gamma$ must contain no crossings on one side. If it contains no crossings in its interior, then there are no crossings in the interior of $\gamma$ in $D$, and we are done. So suppose $\gamma$ contains no crossings on its exterior in $\num(T_1)$. Then the tangle must contain all its crossings on the interior of $\gamma$. Moreover, exactly two strands of the tangle run to the exterior of $\gamma$, and four strands must connect from the NW, NE, SE, and SW corners. See Figure~\ref{fig:primeTangle}. This is impossible for a connected tangle.

\begin{figure}
\includegraphics{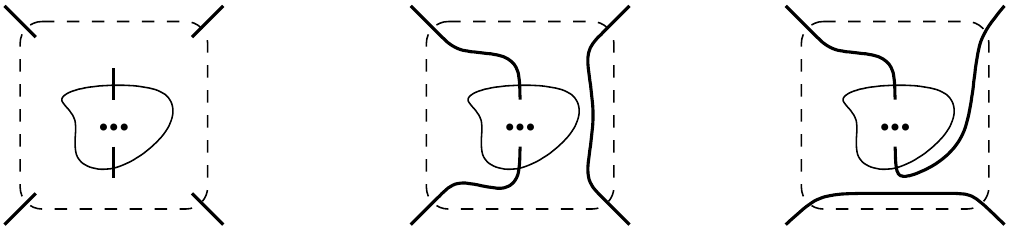}
  \caption{If $\gamma$ has no crossings to the exterior of $\num(T_1)$, then $T_1$ cannot have a connected diagram}
  \label{fig:primeTangle}
\end{figure}

\textbf{Case 3.} The curve $\gamma$ meets $D$ twice in the tangle $T_2$. If $T_2$ has prime numerator or denominator closure, then the same argument as in Case~2 applies, to guarantee that $\gamma$ bounds no crossings on one side. By Lemma~\ref{lemma:tangleprime}, the only remaining case is that $T_2$ consists of a single crossing. But then if $\gamma$ meets $D$ exactly twice in a tangle consisting of a single crossing, it cannot encircle that crossing, but must bound a region of the diagram with no crossings to the interior.

\textbf{Case 4.} The curve $\gamma$ intersects $T_1$ exactly once. The tangle $T_1$ is bounded by a square meeting the diagram in four points, NW, NE, SE, and SW.

If $\gamma$ exits $T_1$ by running through adjacent sides of the square, then we may form a new simple closed curve $\gamma'$ meeting the diagram exactly twice by taking $\gamma$ inside the square, and taking portions of the two sides of the square meeting in one of the corners (NW, NE, SE, or SW). This new curve $\gamma'$ can be drawn into the diagrams of $\num(T_1)$ and $\denom(T_1)$. Since once of these is prime, without loss of generality $\num(T_1)$, $\gamma'$ bounds no crossings on one side in that diagram. If the interior, then to the interior $\gamma'$ bounds a single strand of the diagram, and we may slide $\gamma'$ and $\gamma$ along this strand to remove the intersection of $\gamma$ with $T_1$. If $\gamma'$ bounds no crossings to the exterior, then since there are three knot strands to the exterior, emanating from three of NW, NE, SE, SW, the tangle diagram is not connected. This is a contradiction.

If $\gamma$ exits $T_1$ by running through north and south sides of the square, then consider the portion of $\gamma$ inside $T_1$, and form $\denom(T_1)$. We may connect north to south in $\denom(T_1)$ by an arc that does not meet the diagram of $\denom(T_1)$. Connecting this to $\gamma$, we obtain a closed curve meeting the diagram of $\denom(T_1)$ exactly once. This is impossible. Note this argument did not need $\denom(T_1)$ to be prime. Symmetrically, if $\gamma$ exits $T_1$ by running through the east and west, then we may form a closed curve meeting $\num(T_1)$ exactly once, which is impossible. So we may assume Case 4 does not happen. 

\textbf{Case 5.} The curve $\gamma$ intersects $T_2$ exactly once. Again if $T_2$ contains more than one crossing, Lemma~\ref{lemma:tangleprime} and the argument of Case 4 will imply we can isotope $\gamma$ outside of $T_2$. If $T_2$ contains exactly one crossing, and $\gamma$ meets $T_2$ exactly once, then it must meet $T_2$ in one of the strands running from NE, NW, SW, SE to the center, and we may isotope it from that point of intersection to the corner without meeting any crossings. Thus we may assume, after isotopy, that Case 5 does not happen.

Thus in all cases, $\gamma$ bounds no crossings on one side. 
\end{proof}

\begin{proof}[Proof of Proposition~\ref{prop:montesinosPrime}]
The proof is by induction on the number of tangles in the Montesinos link. If there are two tangles, then either the result holds by Lemma~\ref{lemma:tangleSumPrime}, or both tangles consist of a single crossing. In that case, their sum is a horizontal band of two crossings, hence the Montesinos link is a standard diagram of a $(2,2)$--torus link, which is prime.

Now suppose that any reduced admissible diagram of a Montesinos link with $k$ tangles is prime, and consider a Montesinos link with $k+1$ tangles. The first $k$ tangles to the right have a sum satisfying the hypotheses on $T_1$ in Lemma~\ref{lemma:tangleSumPrime}, and the $(k+1)$-st tangle satisfies the hypothesis on $T_2$. So by that lemma, the diagram of the Montesinos link is prime.
\end{proof}

\subsubsection{Montesinos Links of Interest}
\label{subsubsec:MontesinosLinksofInterest}

Futer, Kalfagianni, and Purcell found a volume estimate for Montesinos links with at least three positive or three negative tangles~\cite{FKP}. A Montesinos link with only one or two tangles has an alternating diagram; its volume is bounded by Lackenby~\cite{lackenby}. Thus the following are the only types of Montesinos links whose volumes cannot be estimated by previous results:
\begin{enumerate}
\item[(a)] Montesinos links with two positive and one negative tangles,
\item[(b)] Montesinos links with one positive and two negative tangles,
\item[(c)] Montesinos links with two positive and two negative tangles.
\end{enumerate}

Notice that the mirror image of a type (b) link is a type (a) link; and taking the mirror will not change the volume of the link complement. Thus we will ignore type (b) links in favor of type (a) links in our analysis. Notice also that there is only one ``arrangement'' of a type (a) link, up to cyclic permutation. However there are two arrangements of type (c) links.
 
\begin{definition} 
A \emph{$++-$ link} is Montesinos link which is the cyclic sum of  $T_a, T_b, T_c$, where $T_a$ and $T_b$ are positive tangles and $T_c$ is a negative tangle. A \emph{$+-+-$ link} is a Montesinos link which is the numerator closure of the sum $T_a + T_b + T_c + T_d$, where $T_a$ and $T_c$ are positive tangles and $T_b$ and $T_d$ are negative tangles. A \emph{$++--$ link} is a Montesinos link which is the numerator closure of the sum $T_a + T_b + T_c + T_d$ where $T_a$ and $T_b$ are positive tangles and $T_c$ and $T_d$ are negative tangles.
\label{def:++-links}
\end{definition}

Our goal is to find volume bounds for these types of Montesinos links. We take Definition \ref{def:++-links} as the definition not only of $++-$, and $+-+-$ links, but also of the tangles $T_a$, $T_b$, $T_c$ and $T_d$. Notice that the definitions of $T_a$, $T_b$, and $T_c$ change depending on whether we are talking about $++-$, $+-+-$, or $++--$ links.

\begin{remark}\label{remark:++--}
In fact, we can actually consider only one of $++--$ and $+-+-$ links, as follows. Since we are assuming the link is reduced, each of our tangles has slope with absolute value at most $1$, as in Definition~\ref{def:reduced}(2). Thus in a $++--$ link, we may subtract $1$ from the second slope and add $1$ to the third. By Theorem~\ref{thm:bonahon}, the result will be equivalent to a $+-+-$ link. For this reason, we only consider $+-+-$ links.
\end{remark}

Notice that reduced diagrams for  $++-$ and $+-+-$ links do not satisfy part (1) of Definition~\ref{def:reduced}; therefore they must satisfy (2). This means that there are no integer parts of slopes of each tangle. In other words, if $T_a$ has slope $[a_n, \dots, a_1]$, then we may assume that $a_n = 0$. Recall from subsection~\ref{sec:rationaltangles} that $a_n$ corresponds to a horizontal band of crossings; so $a_{n-1}$ corresponds to a vertical band of crossings, as in Figure~\ref{fig:admissibleTangle}. Recall also that $a_i \neq 0$ for $i \neq n$. Thus the tangles $T_a$, $T_b$, $T_c$ and $T_d$ have the form of Figure~\ref{fig:admissibleTangle}, except possibly $n$ even replaced with $n$ odd, meaning the vertical band of $a_1$ crossings will be horizontal.

\section{Estimating the guts}\label{sec:estimatingtheguts}

Our volume bounds use estimates developed by Futer, Kalfagianni, and Purcell to bound volumes of semi-adequate links \cite{FKP}. We will see that the $++-$ and $+-+-$ Montesinos links of interest are semi-adequate, and so they fit into this machinery. In this section, we recall the definition of semi-adequate links, and review the relevant features of \cite{FKP}. All the necessary details that we need from \cite{FKP} are contained in this paper. However, one may consult \cite{FKP} or the survey article \cite{fkp:survey} for additional information and for the proofs of the results that we cite. 
 
\subsection{Semi-adequate links}

Given a link diagram $D$ and a crossing $x$ of $D$, we define a new diagram by replacing the crossing $x$ with a crossing--free \emph{resolution}. There are two ways to resolve a crossing, shown in Figure~\ref{fig:resolutions}: the $A$--resolution or the $B$--resolution. 

\begin{figure}
\centering
\includegraphics{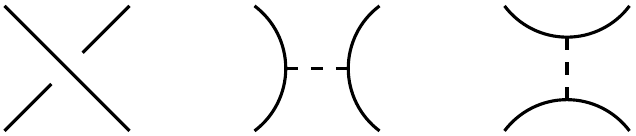}
\caption{Left to right: A crossing, its $A$--resolution, its $B$--resolution }
\label{fig:resolutions}
\end{figure}

\begin{definition} 
A \emph{state} $\sigma$ is a choice of $A$-- or $B$--resolution at each crossing of a diagram $D$. Applying a state $\sigma$ to a diagram $D$ yields a collection of crossing--free simple closed curves called \emph{state circles}. If we attach an edge to the state circles at each removed crossing, i.e.\ attach the dashed edge shown in Figure~\ref{fig:resolutions}, we obtain a tri-valent graph $H_\sigma$. The edges coming from crossings, dashed in the figure, are called \emph{segments}.
\label{def:state} 
\end{definition}

We are concerned mainly with the \emph{all--$A$ state}, which chooses the $A$--resolution at each crossing. We will occasionally mention the all--$B$ resolution as well. The graph \emph{$H_A$} is obtained by applying the all--$A$ state to $D$ and including segments. For an example, see Figure~\ref{fig:diagrams}.
 
\begin{figure}
\centering
\begin{tabular}{cccc}
\includegraphics[height=1in]{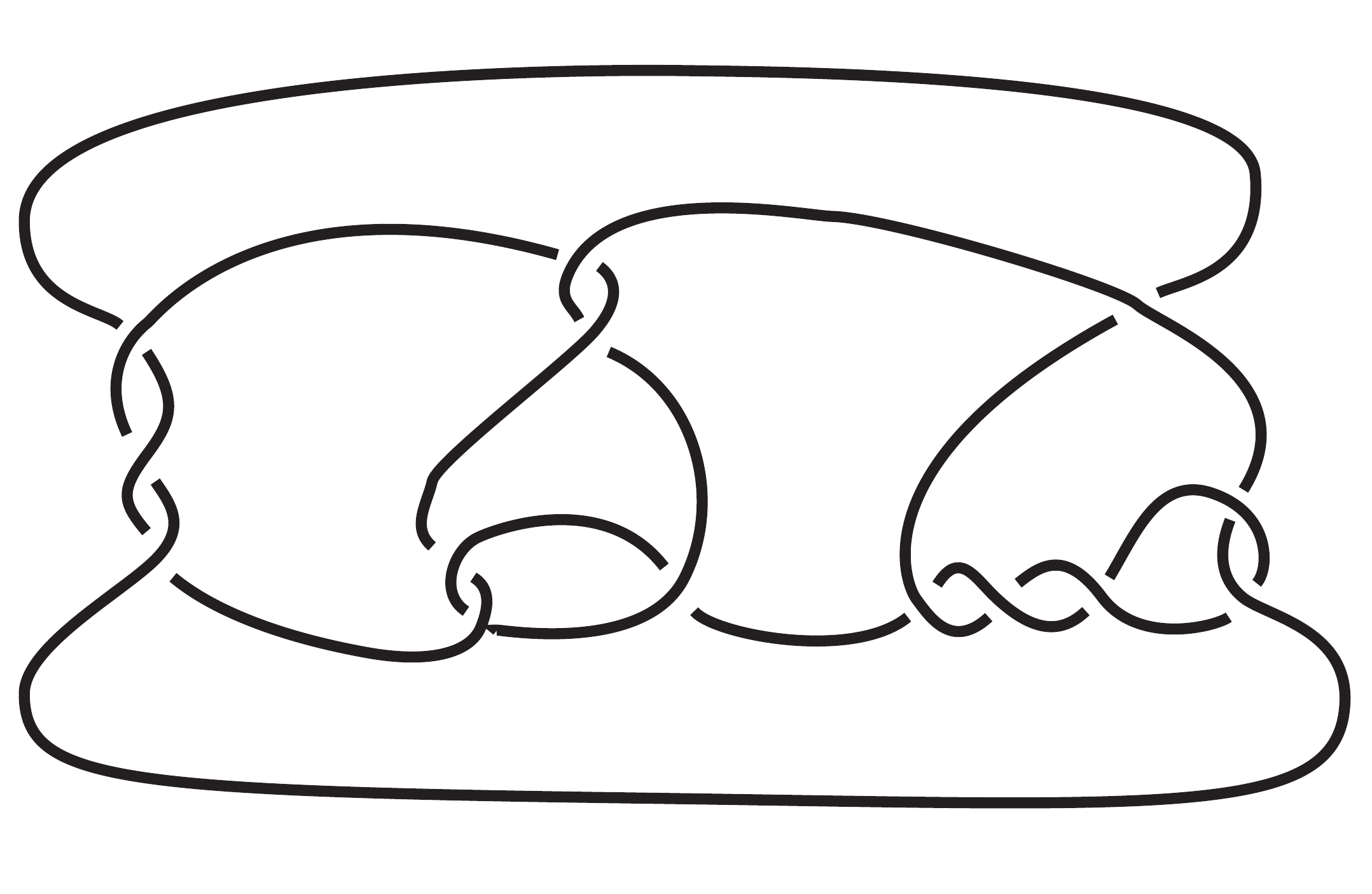} &
\includegraphics[height=1in]{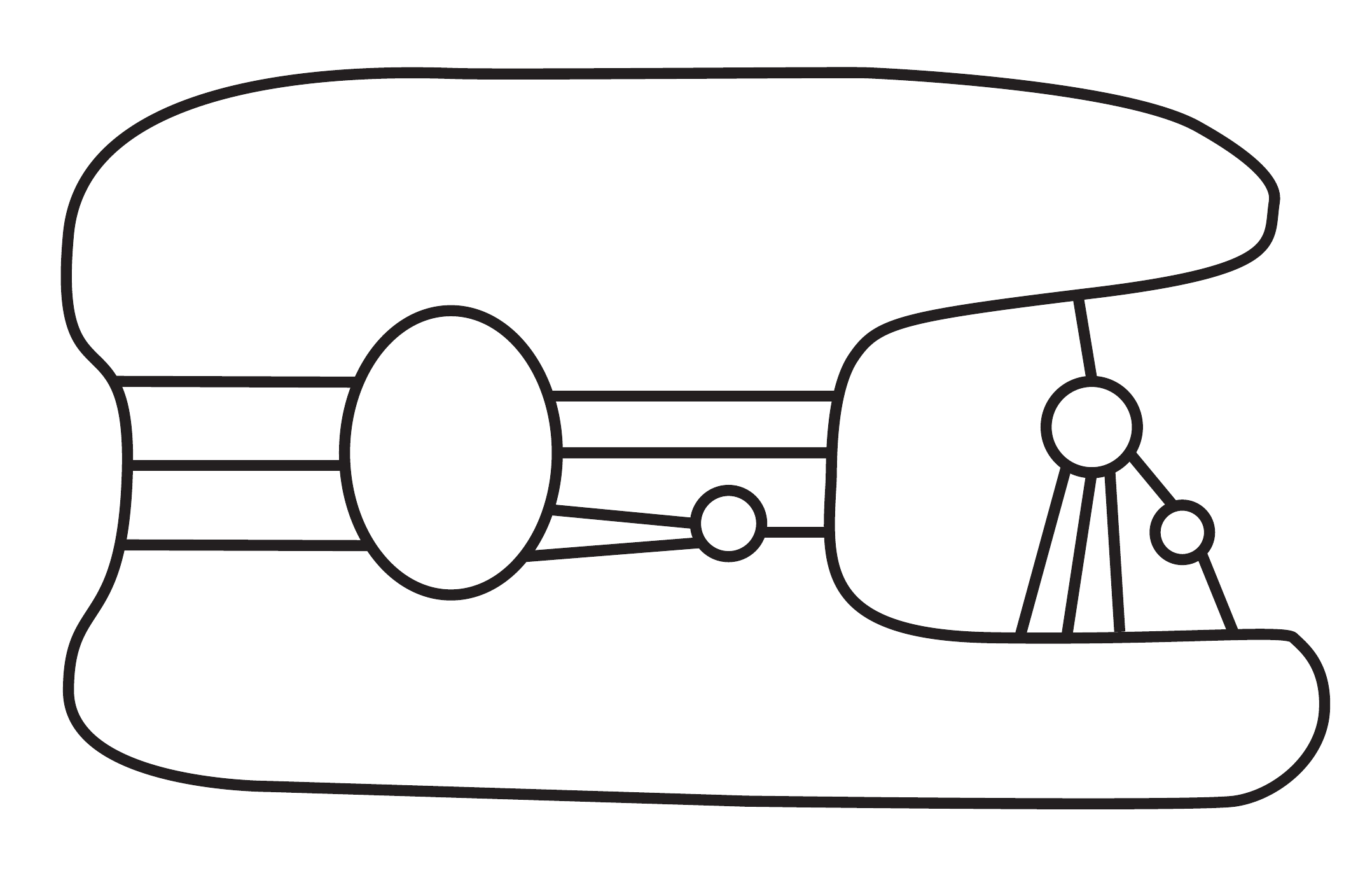} &
\includegraphics[height=1in]{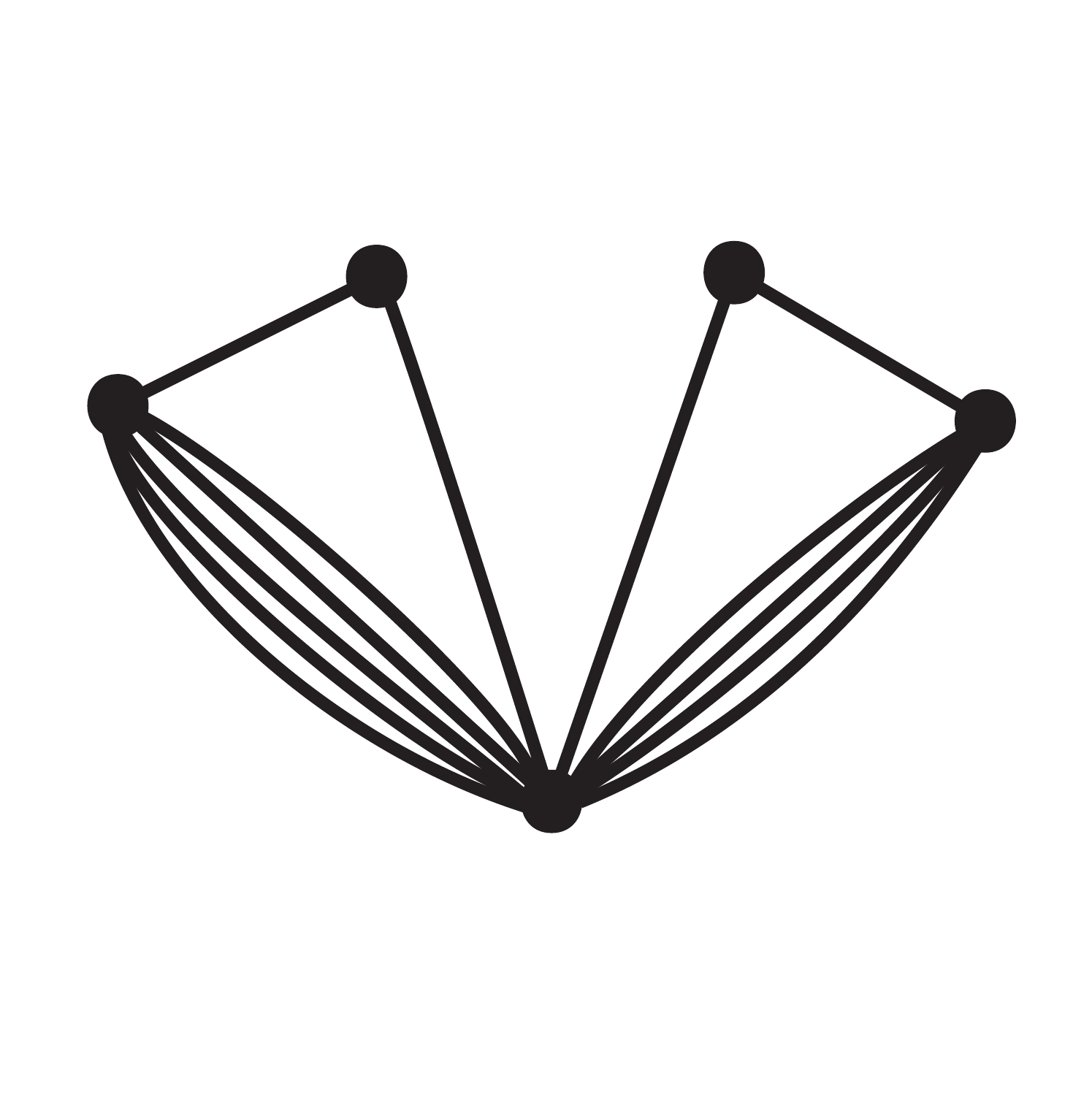} &
\includegraphics[height=1in]{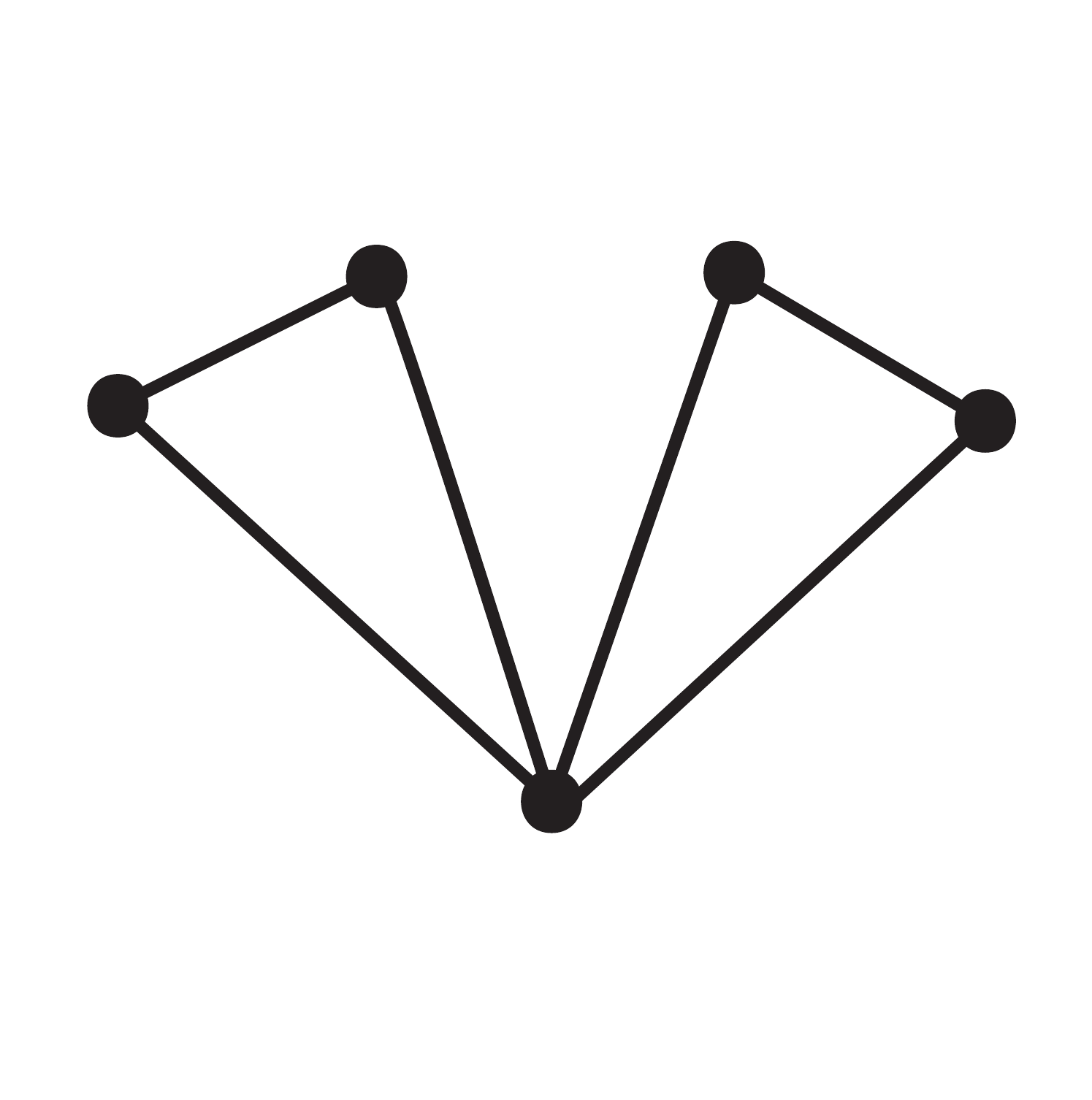} \\
(a) & (b) &
(c) & (d) 
\end{tabular}
\caption{(a) A $++-$ Montesinos link, (b) its graph $H_A$, (c) the state graph $\G_A$, (d) the reduced state graph $\G_A'$}
\label{fig:diagrams}
\end{figure}

\begin{definition}
From $H_A$ we create the \emph{$A$--state graph} $\G_A$ by shrinking each state circle to a single vertex. We obtain the \emph{reduced $A$--state graph} $\G'_A$ by removing multiple edges between pairs of vertices in $\G_A$. An example is shown Figure~\ref{fig:diagrams}.
\label{def:G_A}
\end{definition}

The following two lemmas concern $H_A$ for admissible diagrams of Montesinos links, and they follow immediately from the structure of admissible tangles, as in Figure~\ref{fig:admissibleTangle}. Both lemmas are illustrated in Figure~\ref{fig:HAposnegtangles}.

\begin{lemma}\label{lemma:HAPosAdmissibleTangle}
Let $T$ be a positive admissible tangle with corresponding continued fraction $[0, a_{n-1}, \dots, a_2, a_1]$. Then for any Montesinos knot containing $T$, the graph $H_A$ will have the following properties in a neighborhood of $T$.
\begin{enumerate}
\item A portion of a state circle, call it $S_0$, runs from NW to SW, and a portion of another, call it $S_{n-1}$, runs from NE to SE. 
\item There are $a_{n-1}>0$ horizontal segments running from $S_0$ to $S_{n-1}$ at the north of the graph.
\item For each $a_i$ with $i \equiv n (\mbox{mod } 2)$, there exists a horizontal string of $a_i$ state circles alternating with $a_i$ segments, with the segment on the far east having one endpoint on $S_{i+1}$, south of any other segments, and with the final state circle on the far west denoted by $S_{i-1}$.
\item For each $a_i$ with $i \equiv (n-1) (\mbox{mod } 2)$, there are $a_i$ horizontal segments connecting $S_0$ and~$S_i$.\qed
\end{enumerate}
\end{lemma}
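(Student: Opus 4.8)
The plan is to prove the lemma by a direct local analysis of the all--$A$ resolution on the explicit admissible model of $T$ drawn in Figure~\ref{fig:admissibleTangle}, left, and then to assemble the local pictures band by band. An admissible positive tangle is built (Definition~\ref{def:admissible}) as an alternating stack of \emph{vertical} bands, each added by rotating NW and NE, and \emph{horizontal} bands, each added by rotating NE and SE, with all crossings positive. So it suffices first to compute the all--$A$ resolution of a single positive vertical band and of a single positive horizontal band, and then to track how the resulting state circles and segments glue along the shared strands as the bands are stacked.

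First I would record the two building blocks, each of which is a one--crossing computation propagated along a band. Using the all--$A$ resolution of Figure~\ref{fig:resolutions}, a positive crossing in a \emph{vertical} band resolves to the smoothing whose segment is horizontal, with the two resulting arcs lying to the west and east of the crossing. Consequently, in a vertical band of $a_i$ positive crossings the west halves of all the resolutions line up into a single arc and the east halves into a single arc, joined by exactly $a_i$ horizontal segments, and \emph{no} closed state circle is created inside the band. A positive crossing in a \emph{horizontal} band instead resolves so that a small closed state circle (a bigon) appears between each consecutive pair of crossings; hence a horizontal band of $a_i$ crossings contributes a horizontal string of $a_i$ state circles alternating with its $a_i$ segments. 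These are precisely the two pictures illustrated in Figure~\ref{fig:HAposnegtangles}.

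Next I would assemble the tangle, using the admissibility convention to keep the bookkeeping consistent. Because every horizontal band is attached on the east and the top vertical band $a_{n-1}$ is present with $a_{n-1}>0$, the entire west side of $T$ is a \emph{single} state-circle arc running from NW to SW; call it $S_0$. Reading off the top band then gives the east arc $S_{n-1}$ from NE to SE together with the $a_{n-1}$ horizontal segments between $S_0$ and $S_{n-1}$, proving (1) and (2). For the remaining bands I would proceed down the continued fraction, equivalently by induction on the number of bands: a vertical band $a_i$ (those indices with $i\equiv n-1 \pmod 2$) has its west arc identified with $S_0$ and its east arc with the state circle $S_i$ produced at the junction with the adjacent horizontal band, yielding the $a_i$ horizontal segments of (4); a horizontal band $a_i$ (those with $i\equiv n \pmod 2$) contributes its chain of $a_i$ state circles and $a_i$ segments, with the far--east segment meeting $S_{i+1}$ and the far--west circle being the $S_{i-1}$ to which the next vertical band attaches, giving (3). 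The parity statements are forced by the convention that $a_n$ is horizontal, since the bands then alternate.

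The hard part will be the single--crossing input of the second paragraph: correctly pinning down, for the admissible positive convention, which of the two resolutions of Figure~\ref{fig:resolutions} is the $A$--resolution at the crossings of a vertical band versus a horizontal band, and hence \emph{why} vertical bands merge their side arcs (producing only parallel segments) while horizontal bands split off bigons (producing a chain of state circles). This asymmetry is exactly where positivity of the crossings and the admissible choice of twisting corners are essential. Once the single--crossing picture is fixed, the propagation along each band is immediate, and the only remaining care is in verifying the gluings at the junctions between consecutive bands---in particular that all vertical bands share the one west arc $S_0$---after which (1)--(4) can be read directly off the assembled graph $H_A$ as in Figure~\ref{fig:HAposnegtangles}.
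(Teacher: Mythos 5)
Your proposal is correct and takes essentially the same approach as the paper: the paper offers no written argument at all, stating that the lemma ``follows immediately from the structure of admissible tangles'' as illustrated in Figures~\ref{fig:admissibleTangle} and~\ref{fig:HAposnegtangles}, and your band-by-band computation of the all--$A$ resolution (vertical positive bands yielding parallel segments, horizontal positive bands yielding chains of bigon state circles, glued along the shared west arc $S_0$) is exactly that verification written out in detail. Your identification of which resolution occurs in each type of band matches the paper's figures and the statements of Lemmas~\ref{lemma:HAPosAdmissibleTangle} and~\ref{lemma:HANegAdmissibleTangle}, so the one-crossing check you flag as the remaining ``hard part'' resolves as you claim.
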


\begin{lemma}\label{lemma:HANegAdmissibleTangle}
Let $T$ be a negative admissible tangle with corresponding continued fraction $[0, a_{n-1}, \dots, a_2, a_1]$. Then for any Montesinos knot containing $T$, the graph $H_A$ will have the following properties in a neighborhood of $T$.
\begin{enumerate}
\item A portion of a state circle, call it $S_n$, runs from NW to NE, and a portion of another, call it $S_0$, runs from SW to SE.
\item For each $a_i$ with $i\equiv (n-1) (\mbox{mod } 2)$, there exists a vertical string of $a_i$ state circles alternating with $a_i$ segments, with the segment at the far north having an endpoint on $S_{i+1}$, to the east of all other segments on $S_{i+1}$. The final state circle at the far south is denoted by $S_{i-1}$.
\item For each $a_i$ with $i \equiv n (\mbox{mod } 2)$, there exist $a_i$ vertical segments connecting $S_0$ and $S_i$.\qed
\end{enumerate}
\end{lemma}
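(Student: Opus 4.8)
The plan is to reduce the statement to a purely local analysis of the all--$A$ resolution inside each twist band of $T$, and then to assemble the local pictures using the admissibility conventions of Definition~\ref{def:admissible}(b). Everything is driven by one observation about the sign of the crossings: since every crossing of $T$ is negative, each receives the \emph{same} $A$--smoothing, and this smoothing is exactly the one opposite to the positive case treated in Lemma~\ref{lemma:HAPosAdmissibleTangle}. Concretely, I would first check directly from Figure~\ref{fig:resolutions} that the $A$--resolution of a negative crossing joins its two upper ends to one another and, separately, its two lower ends; this is the only place where the negativity of $T$ enters, and it is dual to the behavior of a positive crossing.

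Granting this, I would resolve the two kinds of bands separately. In a vertical band of $a_i$ negative crossings (the case $i \equiv n-1 \pmod{2}$), the smoothing closes up the lower arc of one crossing with the upper arc of the next, so the band collapses into a vertical stack of $a_i$ bigon state circles joined successively by the $a_i$ segments; this gives conclusion~(2). In a horizontal band of $a_i$ negative crossings (the case $i \equiv n \pmod{2}$), the same smoothing lets the upper and lower strands run straight across and leaves each crossing as a single vertical segment between them, producing $a_i$ parallel vertical segments joining the state circle above to the one below; this gives conclusion~(3). The trivial top band $a_n = 0$ has no crossings and so contributes only the arc from NW to NE, which I would name $S_n$, while the bottom strand from SW to SE is $S_0$; this gives conclusion~(1).

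The remaining work is bookkeeping. I would glue the local pictures band by band, in the order in which the tangle is built, and simultaneously fix the indexing $S_0, S_1, \dots, S_n$ of the state circles. The admissibility hypothesis is essential here, since it fixes that every vertical band is attached by rotating NW and NE and every horizontal band by rotating NW and SW; this determines which already--constructed state circle each new band meets, and on which side. In particular it forces the northmost segment of the vertical string coming from $a_i$ to land on $S_{i+1}$ to the east of all segments already present there, and it identifies the southmost circle of that string with $S_{i-1}$, as asserted in~(2). An induction on the number of bands, with the single--band computations as the base case, then yields all three conclusions at once.

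The main obstacle is not any single computation but the interaction of the sign analysis with the positional claims in~(2): one must confirm that the $A$--resolution of a negative crossing really is the bigon--producing smoothing in every vertical band, so that the smoothing type stays uniform across the alternating bands of the construction, and then verify that the admissibility conventions place the new circles and segments in exactly the east, north, and south positions claimed. A useful conceptual check to run alongside is the mirror relationship: reversing all crossings interchanges the $A$-- and $B$--resolutions, which is precisely why each occurrence of ``horizontal'' in Lemma~\ref{lemma:HAPosAdmissibleTangle} is replaced by ``vertical'' here, and conversely.
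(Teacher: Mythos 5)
Your proposal is correct and follows essentially the same route as the paper, which states this lemma without a written proof on the grounds that it ``follows immediately from the structure of admissible tangles'' (as in Figures~\ref{fig:admissibleTangle} and~\ref{fig:HAposnegtangles}); your local sign analysis of the $A$--smoothing of a negative crossing, applied band by band and assembled via the admissibility conventions, is precisely the intended verification. The only quibble is cosmetic: in a vertical band of $a_i$ crossings, only the $a_i-1$ intermediate circles are forced to bound bigons, while the final circle $S_{i-1}$ is shared with the next band, but this does not affect the count or the conclusion.
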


\begin{figure}
\begin{center}
\begin{tabular}{ccc}
  \input{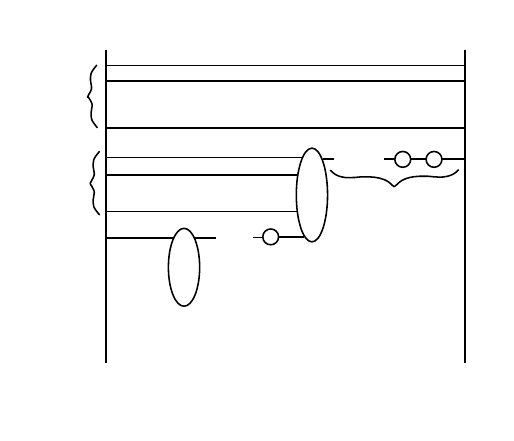_t} & \hspace{.5in} &
  \input{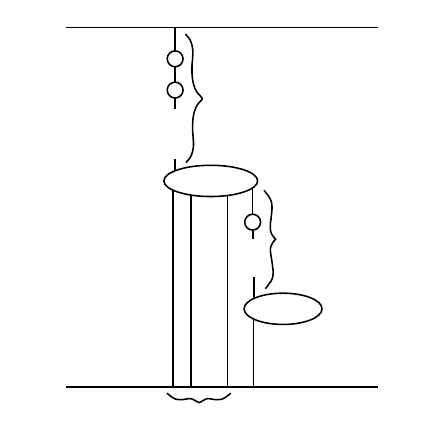_t} \\
  (a) Positive & & (b) Negative \\
\end{tabular}
\end{center}
  \caption{General form of $H_A$ in neighborhood of an admissible positive tangle, left, and an admissible negative tangle, right.}
  \label{fig:HAposnegtangles}
\end{figure}

The following definition is due to Lickorish and Thistlethwaite.

\begin{definition}
A link diagram $D(K)$ is called \emph{$A$--adequate} if $\G_A$ has no 1--edge loops, and \emph{$B$--adequate} if $\G_B$ has no 1--edge loops.
\end{definition}
 
\begin{thm}[Lickorish and Thistlethwaite, \cite{lickorish1988}]\label{thm:lickorish}
Let $D(K)$ be a reduced Montesinos diagram with $r > 0 $ positive tangles and $s > 0$ negative tangles. Then $D(K)$ is $A$--adequate if and only if $r \geq 2$ and $B$--adequate if and only if $s \geq 2$. Since $r+s \geq 3$ in a reduced diagram, $D$ must be either $A$-- or $B$--adequate.
\end{thm}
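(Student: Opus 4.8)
The plan is to reduce the statement to a purely combinatorial question about the all--$A$ state circles and then read off the answer from the local pictures in Lemmas~\ref{lemma:HAPosAdmissibleTangle} and~\ref{lemma:HANegAdmissibleTangle}. First I would dispose of the symmetry: taking the mirror image of $D(K)$ reverses every crossing, so it interchanges the $A$-- and $B$--resolutions and hence interchanges $\G_A$ and $\G_B$, while at the same time turning each positive tangle into a negative one and vice versa, so that $r$ and $s$ are interchanged. Thus it suffices to prove the single assertion that $D(K)$ is $A$--adequate if and only if $r \geq 2$; the $B$--statement then follows by applying this to the mirror, and the final sentence is immediate because $r+s \geq 3$ with $r,s \geq 1$ forces $r \geq 2$ or $s \geq 2$. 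I also note at the outset that, since both $r \geq 1$ and $s \geq 1$, the reduced diagram cannot be of type~(1) in Definition~\ref{def:reduced}, so it is of type~(2); hence every tangle has $0 < |q_i| < 1$, which is exactly what guarantees $a_n = 0$ for each tangle and rules out any single-crossing tangle.

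By definition $D(K)$ is $A$--adequate precisely when $\G_A$ has no $1$--edge loop, that is, when no segment of $H_A$ has both endpoints on the same state circle. The key preliminary step is to classify the segments by the state circles they meet. Using the two lemmas, I would split the state circles of each tangle into its two \emph{boundary circles}, which meet the bounding square of the tangle ($S_0$ and $S_{n-1}$ for a positive tangle, $S_0$ and $S_n$ for a negative one), and its \emph{interior circles}, which are confined to that single tangle. An interior circle cannot be identified with any other state circle, so any segment with an endpoint on an interior circle joins two distinct circles and is harmless. Reading off Lemmas~\ref{lemma:HAPosAdmissibleTangle} and~\ref{lemma:HANegAdmissibleTangle}, the only segments joining \emph{two} boundary circles are the $a_{n-1} \geq 1$ segments of the north band of a positive tangle, which join its two side circles $S_0$ and $S_{n-1}$; here the hypothesis $a_n = 0$ together with non-integrality of the slopes (which forbids a single-crossing tangle) is exactly what prevents a negative tangle from having a segment joining its north circle $S_n$ directly to its south circle $S_0$. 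Consequently $\G_A$ has a $1$--edge loop if and only if some positive tangle has $S_0$ and $S_{n-1}$ equal as global state circles.

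It then remains to decide when the two side circles of a positive tangle coincide globally, and this is the heart of the matter. The mechanism is that at a junction a positive tangle presents a \emph{single} side circle, spanning both its top and its bottom corner, whereas a negative tangle presents \emph{two} circles, $S_n$ across its top and $S_0$ across its bottom. Therefore, wherever a positive tangle abuts a negative one, the single side circle of the positive tangle fuses the top and bottom circles of the negative tangle, while abutting negative tangles merely join top to top and bottom to bottom. Tracing these identifications around the cyclic sum, the circle carrying the east side of a positive tangle $P$ runs through the whole block of negative tangles separating $P$ from the next positive tangle $P'$ and coincides with the west side circle of $P'$; symmetrically, the west side of $P$ lies on the circle coming from the previous positive tangle. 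If $r \geq 2$ then $P' \neq P$, so the east and west circles of every positive tangle are distinct, no north--band segment is a loop, and, there being no other boundary--to--boundary segments, $D(K)$ is $A$--adequate. If $r = 1$ the unique positive tangle is its own predecessor and successor, so its two side circles are forced to coincide and its $a_{n-1} \geq 1$ north--band segments become $1$--edge loops, whence $D(K)$ is not $A$--adequate. The hard part will be making this last tracing step rigorous: one must verify that the global identifications among boundary circles are governed exactly by the rule ``fuse at a positive tangle, pass through at a negative tangle.'' Once that is established the theorem falls out, as everything else is bookkeeping with the local models of Lemmas~\ref{lemma:HAPosAdmissibleTangle} and~\ref{lemma:HANegAdmissibleTangle}.
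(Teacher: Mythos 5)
The first thing to say is that there is no internal proof to compare against: the paper states this theorem as a quoted result of Lickorish and Thistlethwaite \cite{lickorish1988} and never proves it. So your proposal is a self-contained replacement proof built from the paper's own structural lemmas, and in outline it is correct. Your classification of potential $1$--edge loops is right: an interior circle of a tangle is a complete state circle confined to that tangle, so it can never be globally identified with any other circle; by Lemmas~\ref{lemma:HAPosAdmissibleTangle} and~\ref{lemma:HANegAdmissibleTangle} the only segments joining two boundary circles are the $a_{n-1}\geq 1$ north segments of a positive tangle, since reducedness with both signs present forces $0<|q_i|<1$, which excludes the slope $\pm 1$ single-crossing tangle that would otherwise give a direct $S_n$--to--$S_0$ segment in a negative tangle. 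Moreover, the tracing step you flag as ``the hard part'' is in fact immediate: part (1) of each lemma says a positive tangle presents one circle per side (NW--SW and NE--SE) while a negative tangle presents a top circle (NW--NE) and a bottom circle (SW--SE), and all arcs of the cyclic sum and numerator closure are crossing-free, so the boundary circles together with the connecting arcs assemble into exactly one closed state circle per gap between cyclically consecutive positive tangles. For $r\geq 2$ these gap circles are disjoint closed curves, hence distinct state circles, so no north segment is a loop; for $r=1$ the single gap wraps all the way around and identifies the two side circles of the unique positive tangle, making its north segments $1$--edge loops. What your approach buys is a verifiable, diagram-level argument inside the paper's own framework; what the citation buys is coverage of the general case treated by Lickorish--Thistlethwaite.

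Two loose ends should be addressed. First, Lemmas~\ref{lemma:HAPosAdmissibleTangle} and~\ref{lemma:HANegAdmissibleTangle} are stated only for \emph{admissible} tangles, so as written your argument proves the theorem for reduced admissible diagrams, which is a priori narrower than the stated hypothesis ``reduced'' (harmless for this paper, which only ever invokes the theorem on reduced admissible diagrams, but worth stating). Second, and more substantively, your mirror reduction has a scope mismatch with this restriction: the mirror image of an admissible diagram is \emph{not} admissible in the sense of Definition~\ref{def:admissible}, because mirroring sends the NE/SE convention for horizontal bands of a positive tangle to NE/SE, not to the NW/SW convention required of an admissible negative tangle. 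Consequently the $B$--statement for admissible diagrams does not literally follow from your $A$--statement for admissible diagrams. The fix is short --- the construction of $H_A$ is equivariant under reflection, so the reflected versions of the two structural lemmas hold for the mirrored tangles, and the identical loop classification and tracing argument go through --- but without saying this the symmetry reduction, as the very first step of your proof, does not close.
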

 
Also note that if $r=0$ or $s=0$ then $D(K)$ is alternating, in which case it is both $A$-- and $B$--adequate. We will see shortly why adequacy is a desirable property.

From $H_A$ we may obtain a surface as follows. The state circles of $H_A$ bound disjoint disks in the $3$--ball below the projection plane. To these disks, attach a half--twisted band corresponding to each crossing in the original diagram. 
This forms a connected surface called the \emph{$A$--state surface}, or simply $S_A$.

\begin{thm}[Ozawa, \cite{ozawa2011}]
Let $D$ be a (connected) diagram of a link $K$. Then the surface $S_A$ is essential in $S^3 \setminus K$ if and only if $D$ is $A$--adequate.
\label{thm:essential}
\end{thm}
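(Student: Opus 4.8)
Since $S_A$ is a spanning surface for $K$ built from the disks bounded by the state circles of $H_A$ together with a half-twisted band for each crossing, the assertion is that $S_A$ is incompressible and $\partial$-incompressible in the exterior $S^3 \setminus N(K)$ precisely when $D$ is $A$-adequate. I would prove the two implications separately, treating the ``only if'' direction as the easy one. If $D$ is \emph{not} $A$-adequate, then by definition $\G_A$ has a $1$-edge loop: a single segment (the band over some crossing $x$) whose two feet lie on one and the same state circle $C$. The disk bounded by $C$ in the projection plane, together with this band, then exhibits a compressing or $\partial$-compressing disk for $S_A$ in an explicit way, since a co-core arc of the band can be closed up by an arc running through the disk bounded by $C$ to a curve that is essential on $S_A$ but bounds a disk off the surface. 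Making this disk precise from the $1$-edge loop disposes of this direction.

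For the hard direction I would assume $D$ is $A$-adequate and suppose, for contradiction, that $E$ is a compressing disk (the $\partial$-compressing case being entirely parallel), chosen so that $\partial E$ meets the segments of $H_A$ minimally. The plan is to view $S_A$ through the handle decomposition coming from $H_A$, with the disks bounded by state circles as $0$-handles and the bands at crossings as $1$-handles, and to put $\partial E$ into a normal form relative to this decomposition. An innermost-disk and outermost-arc analysis of the intersections of $E$ with the bands reduces the situation to a simplest sub-arc of $\partial E$ that crosses a single band and returns along a single state-circle disk. The combinatorial heart of the argument is then that $A$-adequacy forbids a band from having both feet on the same state circle, so no such simplest arc can close up to produce a genuine compression; this lets one reduce the intersection number of $E$ with the bands, contradicting minimality and forcing $E$ not to exist.

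The packaging I would actually prefer is the polyhedral model used for semi-adequate links: cut $S^3 \setminus N(K)$ along $S_A$ and analyze the resulting pieces, whose combinatorics are governed by $H_A$ and whose essential structure is recorded by the reduced graph $\G_A'$. In this model the complement of $S_A$ carries a book-of-$I$-bundles structure over $\G_A'$, and $\pi_1$-injectivity of $S_A$ translates into the statement that every segment joins \emph{distinct} state circles, which is exactly $A$-adequacy. The main obstacle is precisely this ``if'' direction, and within it the step of establishing the correct normal form so that the reduction is valid; ruling out compressing disks whose boundary meets the bands many times, rather than only the length-one case, is where the real work lies, and it is here that both the connectivity of $D$ and the absence of $1$-edge loops in $\G_A$ must be invoked.
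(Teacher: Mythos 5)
You should first be aware that the paper contains no proof of this statement to compare against: Theorem~\ref{thm:essential} is quoted verbatim from Ozawa \cite{ozawa2011} and used as a black box, its only role being to guarantee that $S_A$ and $S_B$ are essential so that the guts machinery of \cite{AST} and \cite{FKP} applies. So your proposal has to stand on its own, and at present it does not. The ``only if'' direction is morally right but garbled in one place: the arc that can be closed up through the disk bounded by $C$ is the \emph{core} of the half-twisted band, not a co-core (a co-core has both endpoints on the link, and its correct use is as a $\partial$-compression: it is non-separating in $S_A$ precisely because both feet of the band lie on the same state circle, and half of the crossing disk then gives an embedded $\partial$-compressing disk). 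Either version can be made rigorous, but the essentiality of the curve or arc in $S_A$ is exactly where the $1$-edge-loop hypothesis enters, and you only assert it.

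The genuine gap is the ``if'' direction, which you explicitly leave open: both of your sketches stop exactly where the known proofs do all of their work. The innermost-disk/outermost-arc reduction you outline is not known to close up so simply, and Ozawa's actual argument does not proceed this way; he decomposes the state surface of an adequate (and automatically homogeneous, for the all-$A$ state) diagram as a Murasugi sum of state surfaces of simpler pieces and invokes the fact that essentiality is preserved under Murasugi sum, then verifies the summands directly. Your preferred polyhedral packaging is closer in spirit to the Futer--Kalfagianni--Purcell reproof, but as stated it is circular: saying that ``$\pi_1$-injectivity of $S_A$ translates into the statement that every segment joins distinct state circles'' is a restatement of the theorem, not an argument. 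Worse, the structural claim that $S^3 \cutalong S_A$ carries a book-of-$I$-bundles structure over $\G_A'$ is false in general; if it were true, the guts of $M_A$ would vanish identically, and the volume lower bound $\vol(S^3\setminus K) \geq v_8(\X(\G'_A) - \E)$ that this paper relies on would be vacuous. What the polyhedral route actually requires is cutting $M_A$ into checkerboard-colored ideal polyhedra with $4$-valent vertices (Lemma~\ref{lem:checkerboardidealpolyhedron} here) and then running a normal surface argument, using $A$-adequacy to rule out normal bigons and hence compressions. Until you carry out one of these two routes in detail, the hard direction remains unproven.
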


Define the manifold with boundary $M_A = S^3 \cutalong S_A$, where $S^3 \cutalong S_A$ is defined to be $S^3$ with a regular neighborhood of $S_A$ removed. 

\begin{definition}
Let $M = S^3 \setminus N(K)$, and define the \emph{parabolic locus} of $M_A$ to be $P = \partial M_A \cap \partial M$. The parabolic locus consists of annuli. These annuli consist of the remnants of the knot diagram.
\label{def:paraboliclocus}
\end{definition}

In \cite{FKP}, it was shown that $M_A$ can be cut into ideal polyhedra. We will not describe the details of this cutting here, because we will not need those details; we will concern ourselves only with the results. The cutting produces finitely many polyhedra that lie below the projection plane, and a single polyhedron above, which we call the \emph{upper polyhedron}. In this paper, we only need to study the upper polyhedron. It has a nice combinatorial description coming from the graph $H_A$, which we now recall. 

To visualize the upper polyhedron, start with the state graph $H_A$. Recall that $H_A$ lies in the projection plane, and is composed of state circles and segments. We call a given state circle $S$ \emph{innermost} if $S$ bounds a region in the projection plane which does not contain any segments of $H_A$. We shade each innermost disk, giving each a different color. These will correspond to the distinct \emph{shaded faces} of the upper polyhedron. See Figure~\ref{fig:ppm-Shaded}(a) for an example.

\begin{figure} 
\centering
\begin{tabular}{ccc}
  \includegraphics[height=1in]{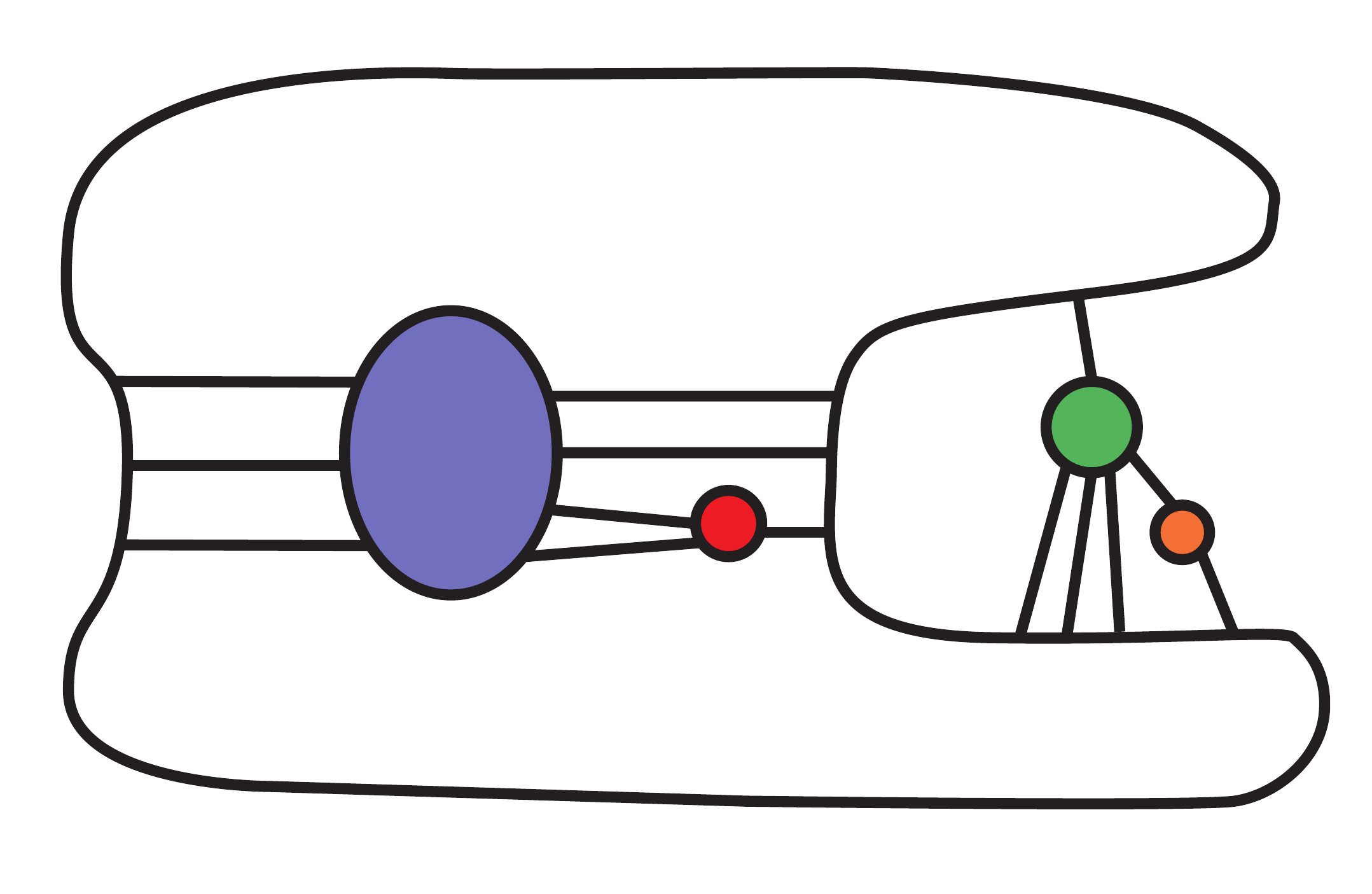} &
  \includegraphics[height=1in]{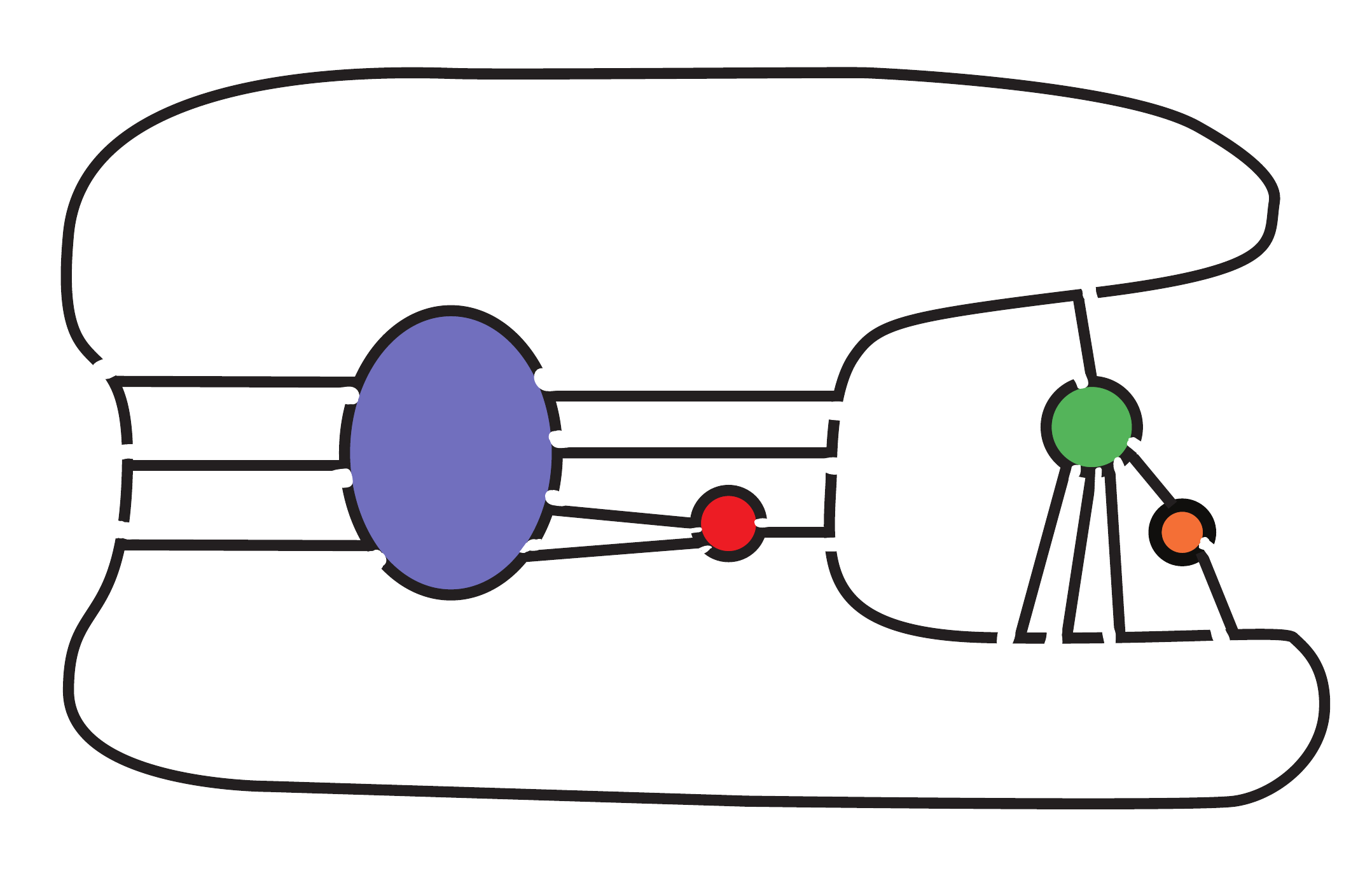} &
  \includegraphics[height=1in]{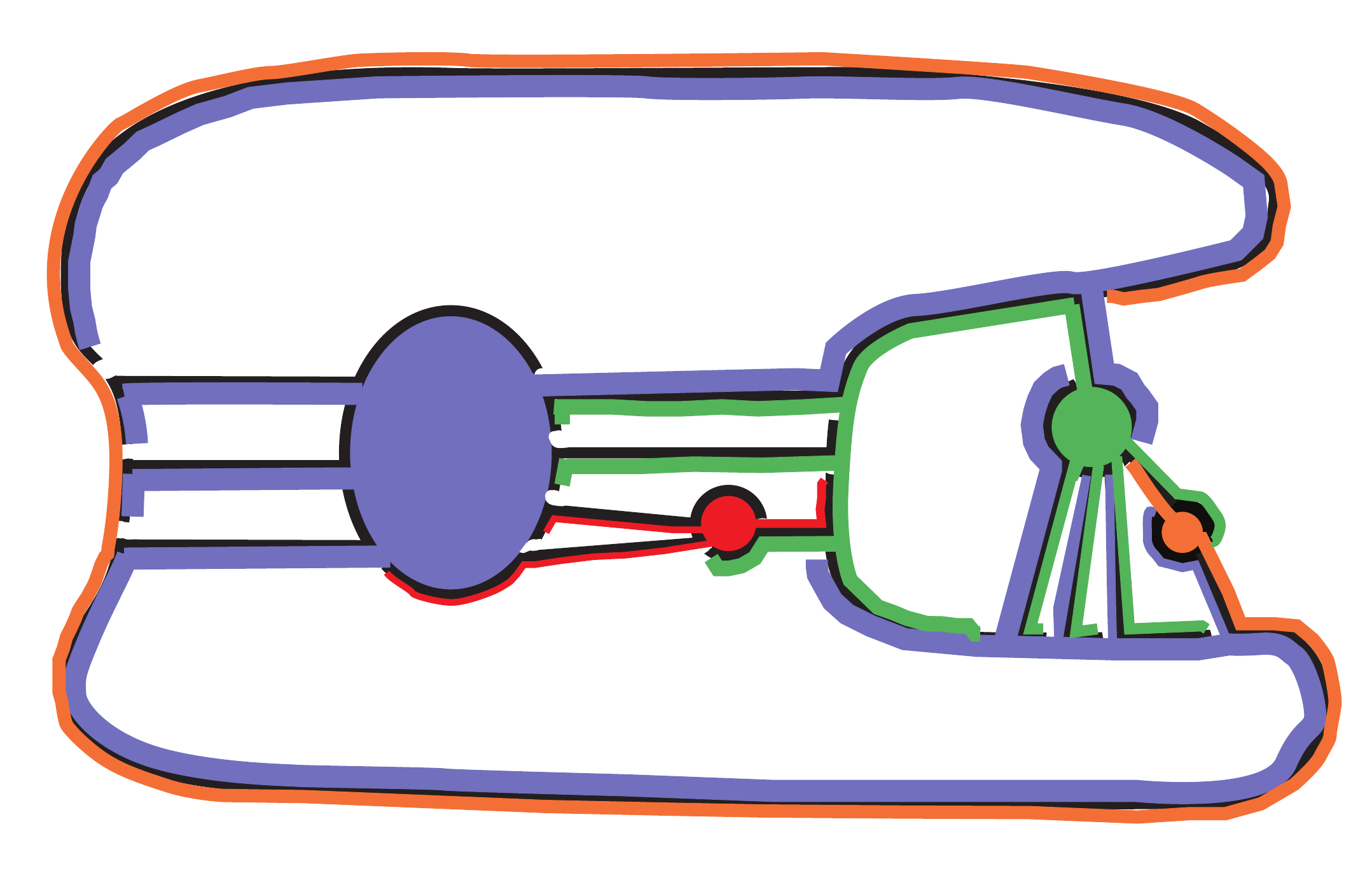} \\
  (a) & (b) & (c) \\
\end{tabular}
\caption{Building the upper polyhedron. (a) Shade innermost disks. (b) Remove portions of state circle. (c) Construct tentacles.}
\label{fig:ppm-Shaded}
\end{figure}

The faces extend from the innermost state circles as follows. Given a segment $s$ of $H_A$, rotate $H_A$ so that $s$ is vertical. There are two distinct ways to perform this rotation; the procedure that follows is independent of that choice. Once $s$ is vertical, erase a small part of the graph immediately northeast of $s$ and a small part immediately southwest of $s$. Repeat this rotation and erasing for each segment in the graph. See, for example, Figure~\ref{fig:ppm-Shaded}(b).

Finally, draw the ``tentacles:'' Choose a segment $s$ that meets one of the innermost state circles. The innermost state circle bounds a shaded face. Rotate $H_A$ so that $s$ is vertical with the shaded face on the top. The small hole to the northeast of $s$ acts as a ``gate'', allowing the shaded face to run through the hole, forming a \emph{tentacle}. The tentacle runs in a thin band along $H_A$, adjacent to a segment and a state circle, running south and then east. It terminates when it runs into a segment. However, the tentacle may run \emph{past} other segments on the opposite side of the state circle without terminating. When this occurs, the tentacle spawns a new tentacle, running through the hole in $H_A$ adjacent to that segment. Each new tentacle also terminates when it hits a segment, and also spawns other tentacles when it runs past a segment. Continue until each tentacle has terminated. Now one shaded face is complete. Repeat this process for each innermost disk. Figure~\ref{fig:ppm-Shaded}(c) shows a completed example of the upper polyhedron.

\begin{definition}
For a given shaded face, or a given tentacles of a shaded face, we will say the face or tentacle \emph{originated} in the innermost state circle of the same color. The place where a tentacle terminates by running into a segment is called the \emph{tail} of the tentacle. The place where a tentacle runs adjacent to a segment is called the \emph{head} of the tentacle.
\end{definition}

The upper polyhedron has faces that include the shaded faces, as well as white faces corresponding to unshaded regions of the diagram. Edges run from head to tail of tentacles, and separate white and shaded faces. Vertices are the remnants of $H_A$. In \cite{FKP}, it was proved that this process produces an ideal polyhedron, with ideal vertices on the parabolic locus.

\begin{lem}[Futer, Kalfagianni, and Purcell, Theorem~3.13 of \cite{FKP}]
\label{lem:checkerboardidealpolyhedron}
Let $D(K)$ be an $A$--adequate link diagram. Then the polyhedron $P_A$ as described above is a checkerboard colored ideal polyhedron with 4--valent vertices.
\end{lem}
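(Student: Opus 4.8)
The plan is to verify the three assertions in turn: that the tentacle construction describes the boundary cell structure of a topological $3$--ball, that this structure is checkerboard colored, and that every ideal vertex is $4$--valent, with $A$--adequacy (i.e.\ $\G_A$ has no $1$--edge loops) as the crucial input. First I would pin down the upper polyhedron as an honest ball. The state surface $S_A$ is built from the disks bounded by the state circles, sitting just below the projection plane, together with one half--twisted band per crossing. The piece of $S^3 \cutalong S_A$ lying above the projection plane is bounded below by copies of these disks and by the uncovered parts of the projection plane, and is cut by the portions of the twisted bands protruding above the plane. Since each protruding band piece is a properly embedded disk in the upper half--space, an innermost--disk argument shows the upper region is simply connected with $S^2$ boundary, hence a ball, and the cell structure on $\partial P_A$ is then read off directly from $H_A$.

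Next I would identify faces, edges, and vertices and check the checkerboard property. The shaded faces are the regions produced by the tentacle construction emanating from the innermost disks; each is an embedded disk because a tentacle runs monotonically along $H_A$ and terminates when it meets a segment. The white faces are the remaining complementary regions, again disks. Edges run from the head to the tail of each tentacle, and the ideal vertices sit on the parabolic locus at the segments. Because a tentacle always runs in a thin band with its originating shaded color on one side of the adjacent state circle and an uncovered (white) region on the other, every edge of $\partial P_A$ has a shaded face on one side and a white face on the other; this is exactly the checkerboard coloring.

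The third step is $4$--valence, where adequacy does the work. Locally at a segment $s$, rotating $H_A$ so that $s$ is vertical, the two erased ``gates'' to the northeast and southwest each transmit exactly one tentacle, and I would verify that exactly two shaded and two white faces meet around the resulting ideal vertex, alternating, so that four edges emanate from it. A $1$--edge loop in $\G_A$ corresponds to a segment both of whose endpoints lie on a single state circle; this is precisely the configuration that would glue a shaded face to itself across $s$ and collapse the vertex below valence four. Since $A$--adequacy excludes such loops, every vertex is genuinely $4$--valent and no face is identified with itself, so $P_A$ is a legitimate ideal polyhedron rather than a degenerate cell complex.

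The step I expect to be the main obstacle is the global one: proving that the locally defined tentacle faces assemble into the boundary sphere of an actual ball with no hidden identifications among faces or vertices. Verifying simple connectivity of each face and of the upper region, and invoking $A$--adequacy at exactly the right points to rule out degenerate gluings, is the technical heart of the argument; once the local picture near each segment is under control, both the checkerboard coloring and the $4$--valence follow comparatively directly.
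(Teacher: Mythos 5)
This lemma is not proved in the paper at all: it is imported as a black box from Futer--Kalfagianni--Purcell \cite{FKP} (their Theorem~3.13), so the only meaningful comparison is with the proof in that monograph. Measured against that proof, your proposal is a sensible roadmap, but it contains a genuine gap, and it is exactly the one you flag in your last paragraph. In your second step you assert that each shaded face is an embedded disk ``because a tentacle runs monotonically along $H_A$ and terminates when it meets a segment.'' Monotonicity of a single tentacle is not the issue: a shaded face is the union of an innermost disk with many tentacles, each spawning further tentacles, and the statement that actually needs proof is that this entire union is embedded and simply connected --- i.e.\ that no tentacle ever flows back into the face it originated from, which would create an annular face or a face meeting itself along an edge or ideal vertex. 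In \cite{FKP} this is the technical heart of the argument (the right--down staircase and ``no Escher stairs'' lemmas, culminating in the result that shaded faces are simply connected), and it is precisely where $A$--adequacy does its work. Deferring this to ``the main obstacle'' without an argument means what you have is an outline of the theorem, not a proof of it.

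A secondary inaccuracy: you locate the use of adequacy at the $4$--valence of vertices, claiming a $1$--edge loop in $\G_A$ would ``collapse the vertex below valence four.'' In the actual construction, $4$--valence and the checkerboard coloring are essentially automatic: each ideal vertex is a component of the remnants of $H_A$, whose segment sides meet shaded faces and whose state--circle sides meet white faces, which forces the alternating shaded--white pattern of four faces around each vertex. What a $1$--edge loop destroys is not the local valence count but the global disk-ness and embeddedness of the shaded faces --- the same issue as above. So the one place where your argument genuinely needs $A$--adequacy is the step you did not carry out, and a correct write-up would have to reproduce (or cite) the staircase machinery of \cite{FKP} rather than treat the face structure as locally evident.
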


Here ``checkerboard'' means that white faces never share an edge, and neither do colored faces. After erasing the small holes as in Figure~\ref{fig:ppm-Shaded}(b), each connected component of the remnants of $H_A$ is one ideal vertex of $P_A$. Each such component consists of segments and portions of state circle, with segments meeting two distinct shaded faces on opposite sides, and portions of state circle meeting white faces at the endpoints of the ideal vertices. This is why the vertices are 4--valent.

The careful reader may notice we have not discussed the \emph{non-prime arcs} in the polyhedral decomposition in \cite{FKP}. This is because in Montesinos links, non-prime arcs only occur between adjacent negative tangles. (See~\cite[Lemma~8.7]{FKP}). We focus on $++-$ and $+-+-$ links, which have no adjacent negative tangles. Thus these diagrams have no non-prime arcs.

We are now ready to discuss the upper polyhedron of the polyhedral decomposition of $++-$ and $+-+-$ Montesinos links.

\begin{lemma}\label{lemma:Pdecomp-ppm}
Let $K$ be a $++-$ Montesinos link. Then $K$ has a reduced, admissible diagram $D$ with upper polyhedron $P_A$ having the following properties.
\begin{enumerate}
\item There is an innermost disk, denoted $G$, between the two positive tangles $T_a$ and $T_b$.
\item There are $a_{n-1}>0$ segments running across $T_a$ from west to east, with east endpoints on $G$. Similarly, there are $b_{n-1}>0$ segments running across $T_b$ from west to east, with their west endpoints on $G$.
\item If $T_a$ contains no state circles in the interior, then all white faces in $T_a$ are bigons. Otherwise, there are $a_{n-1}$ bigons at the north of $T_a$, and a non-bigon just below. The same holds for $T_b$.
\item There is exactly one segment at the north of $T_c$. If $T_c$ has only one state circle, then all white faces below that state circle in $T_c$ are bigons.
\end{enumerate}
\end{lemma}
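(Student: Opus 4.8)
The plan is to build $D$ from the reduced, admissible tangle diagrams guaranteed in Section~\ref{sec:montesinoslinks}, and then read the upper polyhedron directly off the local pictures of $H_A$ supplied by Lemmas~\ref{lemma:HAPosAdmissibleTangle} and~\ref{lemma:HANegAdmissibleTangle}. Since $K$ is a $++-$ link it has a reduced, admissible diagram $D$, and since $D$ has two positive tangles, Theorem~\ref{thm:lickorish} shows $D$ is $A$--adequate; hence by Lemma~\ref{lem:checkerboardidealpolyhedron} the upper polyhedron $P_A$ exists and is a checkerboard ideal polyhedron with $4$--valent vertices. Because the diagram is reduced but not of type~(1), each tangle satisfies $0<|q_i|<1$, so in each continued fraction $a_n=0$ and the northernmost band $a_{n-1}$ is vertical. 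This puts $T_a,T_b,T_c$ in exactly the form of Figure~\ref{fig:admissibleTangle} and lets me apply the two structural lemmas verbatim.

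For Property~(1) I would locate the state circle $S_G$ formed between $T_a$ and $T_b$. By Lemma~\ref{lemma:HAPosAdmissibleTangle}(1) the east side of $T_a$ carries an arc of its state circle $S_{n-1}$ running from the NE to the SE corner, and the west side of $T_b$ carries an arc of its $S_0$ running from NW to SW. The two sum arcs joining $T_a$ to $T_b$ close these into a single state circle $S_G$ bounding the gap between the tangles; since there are no crossings between the tangles, this region contains no segments, so its disk $G$ is innermost, proving~(1). Property~(2) is then immediate from Lemma~\ref{lemma:HAPosAdmissibleTangle}(2): the $a_{n-1}$ (resp.\ $b_{n-1}$) north segments of $T_a$ (resp.\ $T_b$) run between $S_0$ and $S_{n-1}$, and the end lying on the east circle $S_{n-1}$ of $T_a$ (resp.\ the west circle $S_0$ of $T_b$), being part of $\partial G$, gives the claimed endpoints on $G$.

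Properties~(3) and~(4) require translating the local picture of $H_A$ into faces of $P_A$ via the tentacle construction of Section~\ref{sec:estimatingtheguts}, and this is where the real work lies. The idea is that the $a_{n-1}$ parallel horizontal segments at the north of $T_a$ form a ``ladder'' between $S_0$ and $S_{n-1}$; after erasing the small holes northeast and southwest of each segment, the state-circle arcs between consecutive rungs collapse into ideal vertices, leaving each region between two adjacent rungs bounded by exactly two edges, i.e.\ a white bigon. I would count these carefully, using the closure arc at the top of the diagram to account for the outermost region, to get exactly $a_{n-1}$ bigons when interior state circles are present, and to see that when $T_a=[0,a_{n-1}]$ has no interior state circles every white face of $T_a$ is such a bigon. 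The non-bigon ``just below'' appears where the continued fraction first branches (the transition governed by $a_{n-2}$), where a string of interior state circles as in Lemma~\ref{lemma:HAPosAdmissibleTangle}(3) and~(4) bounds a region with more than two edges; the same applies to $T_b$. Property~(4) is this analysis applied to $T_c$ through Lemma~\ref{lemma:HANegAdmissibleTangle}: the northernmost band is now vertical, its $A$--resolution produces a vertical string whose unique topmost segment meets the north state circle $S_n$ (the single north segment), and when $T_c$ is a single band the interior white faces are again bigons, stacked vertically.

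The main obstacle is this last paragraph: verifying that the complementary regions of the ladder really become two-edged faces of $P_A$---rather than the four-sided regions they appear to be in the plane---requires careful tracking of how the tentacle construction turns portions of state circles into ideal vertices instead of edges, together with an honest boundary count that distinguishes the bigons at the north from the single non-bigon region where the tangle's continued fraction first branches.
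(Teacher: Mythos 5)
You take the same route as the paper, whose entire proof is ``immediate from Lemmas~\ref{lemma:HAPosAdmissibleTangle} and~\ref{lemma:HANegAdmissibleTangle}, and the definition of $P_A$,'' and your arguments for properties (1) and (2) are correct. For property (3) your count comes out right, but the step you explicitly defer is the one that needs saying: the outermost region is a bigon because its only two boundary \emph{segments} are the top rung of $T_a$ and the top rung of $T_b$; the top closure arc, the arc of $G$, and $S_n$ of $T_c$ are portions of state circles, so they contribute ideal vertices rather than edges, and the unique north segment of $T_c$ hangs \emph{south} of $S_n$, so it does not border this region. With that observation, the $a_{n-1}-1$ regions between consecutive rungs together with this outer bigon give the claimed $a_{n-1}$ bigons, and the region below the bottom rung meets the bottom rung, all segments of the $a_{n-2}$ string, and the top rung of the next ladder, hence is the non-bigon.

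Your treatment of property (4), however, contains a genuine error. You identify the hypothesis ``$T_c$ has only one state circle'' with ``$T_c$ is a single band'' and assert that the interior white faces are then ``bigons, stacked vertically.'' Neither holds. In a negative admissible tangle the vertical bands take the long $A$--resolution (Lemma~\ref{lemma:HANegAdmissibleTangle}(2)): a single vertical band of $c$ crossings produces a chain whose final circle is $S_0$ itself, hence $c-1$ interior state circles, so it satisfies the hypothesis only when $c=2$; moreover the two regions flanking this chain run out of the tangle (west into the gap with $T_b$, east into the unbounded face), so the chain bounds no white face contained in $T_c$ at all. The case the lemma actually concerns is a tangle such as $[0,1,m]$ (slope $-m/(m+1)$, so $-1\leq q \leq -1/2$): one segment joins $S_n$ to the single circle, and the \emph{horizontal} band below it takes the short $A$--resolution, giving $m$ parallel vertical segments from that circle down to $S_0$; the white faces below the circle are the $m-1$ regions between consecutive parallel segments, lying side by side, each with exactly two edges. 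This is not a pedantic point: that picture---a single circle $I$ with parallel segments running to the south of the tangle, as in Figure~\ref{fig:specnegtangle}---is exactly the structure the later EPD analysis relies on, so your reading would not establish (4) in the form in which it is used.
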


\begin{proof}
Immediate from Lemmas~\ref{lemma:HAPosAdmissibleTangle} and~\ref{lemma:HANegAdmissibleTangle}, and the definition of $P_A$.
\end{proof}
 
\begin{lem}\label{lemma:Pdecomp-pmpm}
Let $K$ be a $+-+-$ Montesinos link. Then $K$ has a reduced, admissible diagram $D$ with upper polyhedron $P_A$ having the following properties.
\begin{enumerate}
\item The tangle $T_b$ is contained in a state circle denoted $G$, lying between the two positive tangles.
\item There are $a_{n-1}>0$ segments running across $T_a$ from west to east, with east endpoints on $G$. Similarly, there are $c_{n-1}$ segments running across $T_c$ from west to east, with west endpoints on $G$.
\item If $T_a$ has no state circles in the interior, then all white faces are bigons. Otherwise, there are $a_{n-1}$ bigons at the north of $T_a$, and a non-bigon just below. Similarly for $T_c$.
\item There is exactly one segment at the north of $T_b$. If $T_b$ has only one state circle, then all white faces below that state circle in $T_b$ are bigons. Similarly for $T_d$.
\end{enumerate}
\end{lem}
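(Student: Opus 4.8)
The plan is to mirror the one-line proof of Lemma~\ref{lemma:Pdecomp-ppm}: every assertion should fall out of the local descriptions of $H_A$ near a positive and near a negative tangle (Lemmas~\ref{lemma:HAPosAdmissibleTangle} and~\ref{lemma:HANegAdmissibleTangle}) together with the recipe for building $P_A$ (innermost disks, segments, tentacles, and white faces). Since a $+-+-$ link is the numerator closure of $T_a + T_b + T_c + T_d$ with $T_a, T_c$ positive and $T_b, T_d$ negative, I would first apply Lemma~\ref{lemma:HAPosAdmissibleTangle} to $T_a$ and $T_c$ and Lemma~\ref{lemma:HANegAdmissibleTangle} to $T_b$ and $T_d$, recording (with superscripts naming the relevant tangle, since each has its own length $n$) the boundary state-circle arcs: each positive tangle has an arc $S_0$ running NW--SW on its west and an arc $S_{n-1}$ running NE--SE on its east, while each negative tangle has an arc $S_n$ running NW--NE across its top and $S_0$ running SW--SE across its bottom.

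The heart of the argument, and property (1), is identifying the enclosing state circle $G$. I would trace the outer boundary of the portion of $H_A$ in $T_b$, using the arcs of the tangle sum, which identify NE/SE of $T_a$ with NW/SW of $T_b$, and NE/SE of $T_b$ with NW/SW of $T_c$. Starting at NW of $T_b$, the top arc $S_n^{(b)}$ runs to NE of $T_b$ $=$ NW of $T_c$, where it continues into the west arc $S_0^{(c)}$ down to SW of $T_c$ $=$ SE of $T_b$; this continues into the bottom arc $S_0^{(b)}$ to SW of $T_b$ $=$ SE of $T_a$; this continues up the east arc $S_{n-1}^{(a)}$ to NE of $T_a$ $=$ NW of $T_b$, closing the loop. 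This single closed curve $G = S_n^{(b)} \cup S_0^{(c)} \cup S_0^{(b)} \cup S_{n-1}^{(a)}$ surrounds $T_b$ on all four sides and lies between $T_a$ and $T_c$, giving (1). I would also observe that $T_d$ acquires no such circle: its east boundary is joined, through the numerator-closure arcs, to the west boundary of $T_a$ along the outside of the whole diagram rather than between two positive tangles, which is why (1) singles out $T_b$.

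Properties (2)--(4) are then transcriptions of the two structure lemmas. For (2), Lemma~\ref{lemma:HAPosAdmissibleTangle}(2) supplies $a_{n-1}>0$ horizontal segments from $S_0^{(a)}$ to $S_{n-1}^{(a)}$ across $T_a$; since $S_{n-1}^{(a)}\subset G$ their east endpoints lie on $G$, and symmetrically the $c_{n-1}$ segments across $T_c$ have west endpoints on $G$ because $S_0^{(c)}\subset G$. For (3), I distinguish whether the positive tangle has interior state circles (equivalently whether its continued fraction has any entry beyond $a_{n-1}$): if not, it is a single vertical band whose $A$-resolution yields only bigon white faces; if so, the $a_{n-1}$ parallel north segments cut off $a_{n-1}$ bigons with the first interior state circle bounding a single non-bigon just below, and identically for $T_c$. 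For (4), Lemma~\ref{lemma:HANegAdmissibleTangle}(2) applied to the topmost vertical band $b_{n-1}$ shows the $A$-resolution produces a vertical string whose only segment meeting the top circle is the northmost one, giving exactly one segment at the north of $T_b$; when $T_b$ has a single interior state circle the remaining string faces are bigons, and the same applies to $T_d$.

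The main obstacle I anticipate is the bookkeeping in (1): one must verify that the four boundary arcs genuinely concatenate into a single simple closed curve and are not split or interrupted by the interior segments of the tangles, which requires tracking the corner labelling and orientation conventions of Figure~\ref{fig:admissibleTangle} consistently across each joining arc of the sum. Once $G$ is correctly identified, the rest of the lemma is a direct reading of Lemmas~\ref{lemma:HAPosAdmissibleTangle} and~\ref{lemma:HANegAdmissibleTangle} through the definition of $P_A$.
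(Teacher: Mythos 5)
Your proof is correct and takes essentially the same approach as the paper: the paper's own proof of this lemma is the single line ``Again immediate from Lemmas~\ref{lemma:HAPosAdmissibleTangle} and~\ref{lemma:HANegAdmissibleTangle},'' and your argument supplies exactly the details that line leaves implicit --- concatenating the four boundary arcs of $T_a$, $T_b$, $T_c$ through the tangle-sum and numerator-closure arcs to identify the state circle $G$, and then reading properties (2)--(4) off the two structure lemmas via the definition of $P_A$. Nothing in your write-up deviates from the route the paper intends; it is a faithful expansion of it.
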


\begin{proof}
Again immediate from Lemmas~\ref{lemma:HAPosAdmissibleTangle} and~\ref{lemma:HANegAdmissibleTangle}.
\end{proof}

\begin{remark}
In many of the figures below, for simplicity, we omit the step above of erasing a small portion of the graph near each segment, and just draw the tentacles without accurately portraying the ideal vertices of the diagram. 
\end{remark}

\subsection{Essential Product Disks}
By work of \cite{FKP}, the problem of bounding volumes of $A$--adequate links can be reduced to the problem of finding essential product disks in the upper polyhedron.

\begin{definition}
An \emph{essential product disk in the upper polyhedron} (EPD) is a properly embedded essential disk in $P_A$ whose boundary consists of two arcs in two shaded faces, and two points where the boundary meets the parabolic locus.
\end{definition}

We consider the boundary of an EPD, which we will draw into the diagram of $P_A$. Figure~\ref{fig:EPD}(a) depicts a portion of the diagram of $P_A$ with the boundary of an EPD. Given an EPD $E$ in $P_A$, we generally pull $\partial E$ into a \emph{normal square}. A normal square is a disk $D$ in normal form, with $\partial D$ intersecting $P_A$ in exactly four arcs. We will only need the special kinds of normal squares described in the next lemma.

\begin{figure}
\centering
\begin{tabular}{cc}
  \includegraphics{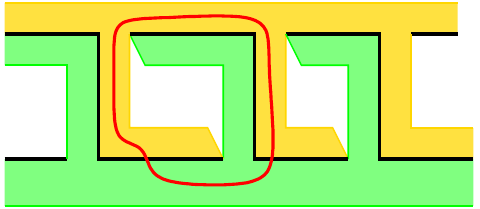} &
  \includegraphics{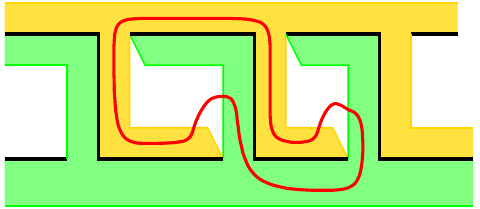} \\
  (a) An EPD & (b) Pulled into a normal square \\
\end{tabular}
\caption{A portion of the upper polyhedron containing an EPD}
\label{fig:EPD}
\end{figure}

\begin{lem}[Lemma 6.1 of \cite{FKP}]
Let $D$ be a diagram of a Montesinos link with an EPD, $E$, embedded in the upper polyhedron. Then $\partial E$ can be pulled off the parabolic locus to give a normal square in $P_A$ such that:
\begin{enumerate}
\item Two opposite sides of the square run through shaded faces, which we color green and gold, with endpoints of each side on distinct edges of $P_A$.
\item The other two edges of the square run through white faces, and cut off a single vertex of the white face. These are the \emph{white sides} of the normal square.
\item The single vertex of the white face that is cut off by a white side of $\partial E$ forms a triangle, so that when moving clockwise the edges of the triangle are colored gold-white-green.
\end{enumerate}
\label{lemma:pullnormalsquare}
\end{lem}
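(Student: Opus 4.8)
The plan is to normalize $\partial E$ by a local push-off at each of the two points where it meets the parabolic locus, and then to read properties (1)--(3) directly off the checkerboard structure of $P_A$ recorded in Lemma~\ref{lem:checkerboardidealpolyhedron}. Recall from that lemma that $P_A$ is a checkerboard-colored ideal polyhedron all of whose vertices are $4$-valent; since shaded faces never share an edge, the four faces meeting at any ideal vertex alternate shaded-white-shaded-white in cyclic order, so in particular the two shaded faces sit opposite one another. By the definition of an EPD, $\partial E$ consists of two arcs $\alpha_1,\alpha_2$ running through shaded faces together with two points $p_1,p_2$ lying on the parabolic locus (Definition~\ref{def:paraboliclocus}); each $p_i$ is an ideal vertex of $P_A$ through which $\partial E$ passes from one shaded face to the opposite one.

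First I would carry out the push-off. At each $p_i$, displace $\partial E$ slightly to one side of the ideal vertex. Because the arc enters and leaves through the two \emph{opposite} shaded faces at that vertex, and the two remaining faces there are white, the displaced arc is forced to cross one of these white faces along a short arc $\beta_i$. After doing this at both $p_1$ and $p_2$, the curve $\partial E$ is built from the four arcs $\alpha_1,\beta_1,\alpha_2,\beta_2$ in cyclic order, alternating shaded-white-shaded-white; this is the candidate normal square, with the two shaded arcs to be colored green and gold and the two $\beta_i$ the white sides. Since each $\beta_i$ hugs the ideal vertex, it cuts off exactly that single vertex of its white face, which gives property (2); the triangle so cut off is bounded by $\beta_i$ together with the two polyhedral edges that separate this white face from the two adjacent shaded faces, which is the triangle of property (3).

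The heart of the argument is to verify that the resulting curve is genuinely a normal square, that is, that the two shaded sides $\alpha_1,\alpha_2$ each run between \emph{distinct} edges of $P_A$ (property (1)). This is where essentiality of $E$ enters. Were some $\alpha_i$ to have both endpoints on the same edge, then $\alpha_i$ together with a subarc of that edge would bound a disk in the shaded face meeting no ideal vertex, and an innermost such disk would let me isotope $\partial E$ across it, reducing the number of intersections of $\partial E$ with the $1$-skeleton; an essential product disk survives such simplifications, so after normalization each shaded side is forced to join two distinct edges. I expect this normalization step---ruling out inessential, vertex-free arcs and confirming that the simplified disk is still an EPD rather than having become boundary-parallel or compressible---to be the main obstacle, since it is the only place that genuinely uses the hypothesis that $E$ is essential.

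Finally I would pin down the orientation asserted in property (3). Having fixed the two push-off directions, I would label the shaded sides green and gold so that, reading the boundary of the cut-off triangle clockwise, its edges appear in the order gold-white-green; because both white sides arise from the same push-off convention at $4$-valent vertices with alternating colors, this labeling is consistent at $\beta_1$ and $\beta_2$ simultaneously. This last point is a bookkeeping check on the local picture rather than a substantive step, and completes the proof.
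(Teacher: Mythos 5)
First, context: the paper never proves this statement at all --- it is imported verbatim as Lemma~6.1 of \cite{FKP}, and no argument for it appears anywhere in the text --- so there is no in-paper proof to compare yours against. Judged on its own terms, your overall strategy is the right one: use the checkerboard coloring and $4$--valence of the ideal vertices (Lemma~\ref{lem:checkerboardidealpolyhedron}) to see that at each parabolic point the two faces met by $\partial E$ are the two \emph{opposite} shaded faces, then push $\partial E$ off each vertex into an adjacent white corner. Your derivation of property~(2), and of the triangle picture, is correct.

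There are, however, two genuine gaps. The first is in property~(3): your order of choices is backwards, and as written the step can fail. You fix both push-off directions first and then try to choose the green/gold labels. But at a given vertex the two possible push-offs produce triangles of \emph{opposite} orientation (gold-white-green versus green-white-gold, read clockwise), so with both push-offs fixed the two triangles may impose \emph{conflicting} labelings --- one vertex demanding that a given shaded face be gold, the other demanding that the same face be green. Your claim that consistency is automatic ``because both white sides arise from the same push-off convention'' is exactly what needs proof, and it is false for the natural convention: if you always push to the same side relative to a direction of travel along $\partial E$, a local computation shows you get gold-white-green at the vertex where you pass from gold into green, but green-white-gold at the vertex where you pass from green into gold. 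The repair is to reverse the order: fix the coloring first, then at each vertex select the unique push-off compatible with~(3); the two selections are independent, so both can be made.

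The second gap is in property~(1), which you yourself flag as the main obstacle. Your reduction argument does not work as stated: isotoping a shaded side across the vertex-free disk pushes it across an edge into the adjacent white face, after which the boundary is no longer a square of the required type at all (it runs through one white and one shaded face and meets the $1$--skeleton only twice), so ``reduce and repeat'' does not terminate in the claimed normal form, and the assertion that ``an essential product disk survives such simplifications'' is precisely the statement to be proved. Worse, this interacts with the first gap: once property~(3) pins down the push-off directions, you have no freedom left to re-choose them so as to separate the endpoints. The clean resolution is that (1) is automatic once the push-offs are chosen compatibly with~(3): the edge receiving the endpoint of a shaded side lies to the \emph{right} of that side at one vertex and to the \emph{left} of it at the other (relative to a direction of travel), and since the shaded face is a disk, a single boundary edge cannot lie on both sides of a properly embedded arc; hence the two endpoints cannot lie on one edge. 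Essentiality of $E$ is what guarantees the configuration is nondegenerate to begin with (two distinct shaded faces, two distinct parabolic points), not what rescues the same-edge case.
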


Given $E$ an EPD in $P_A$, two edges of the square run through shaded faces; we may choose how we color these faces green and gold. Then by  property~(3) of Lemma~\ref{lemma:pullnormalsquare}, we may force the triangles formed by the white faces to be oriented accordingly. Notice that the orientation given in property~(3) will force the white edges to occur at tails of gold tentacles and heads of green. Figure~\ref{fig:EPD}(b) depicts a normal square in $P_A$ oriented as in the above lemma.

\subsection{Complex essential product disks}
Recall that we are attempting to bound volumes of Montesinos links. A bound on volume is given by equation~\eqref{eqn:FKPVol}, originally from \cite[Theorem~9.3]{FKP}, which applies to reduced, admissible diagrams of hyperbolic Montesinos links. 
The term $||E_c||$ appears in this equation; we define it in this section. We also determine further information about EPDs in $P_A$ and their relationships to each other.

\begin{definition} Let $S$ be a surface in $P_A$. A \emph{parabolic compression disk} for $S$ is an embedded disk $E$ in $P_A$ such that:
\begin{enumerate}
\item[(i)] $E \cap S$ is a single arc in $\partial E$;
\item[(ii)] The rest of $\partial E$ is an arc in $\partial P_A$ that has endpoints disjoint from the parabolic locus $P$ and that intersects $P$ in exactly one transverse arc;
\item[(iii)] $E \cap S$ is not parallel in $S$ to an arc in $\partial S$ that contains at most one component of $S \cap P$.
\end{enumerate}
\label{def:pcdisk}
\end{definition}

Figure~\ref{fig:paraboliccompression}(a) shows a portion of a diagram of $P_A$. The red line represents the boundary of an EPD $D$. There is a parabolic compression disk $E$ for $D$. The dashed red line shows the arc as in part (ii)~of the above definition.

\begin{figure}
\centering
\begin{tabular}{cc}
\includegraphics{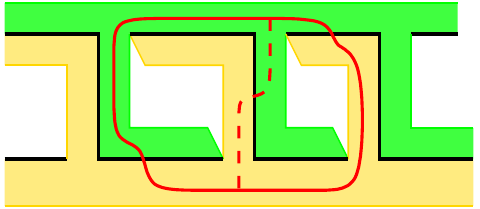} &
\includegraphics{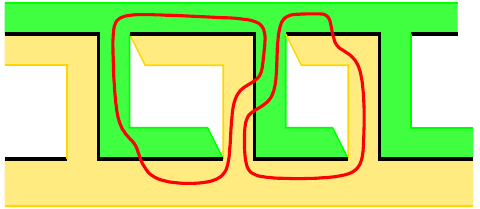} \\
(a) Arc of parabolic compression disk & (b) Two new EPDs
\end{tabular}
\caption{A parabolic compression}
\label{fig:paraboliccompression}
\end{figure}

\begin{definition}
If $D$ is an EPD in $P_A$ with a parabolic compression disk $E$ for $D$, then surgering $D$ along $E$ will produce a pair of new EPDs, $D'$ and $D''$. We say that $D$ and $D' \cup D''$ are \emph{equivalent under parabolic compression}.
\label{def:equivalentunderp}
\end{definition}

Figure~\ref{fig:paraboliccompression}(b) shows the two new EPDs $D'$ and $D''$ obtained from the disk $D$ in Figure~\ref{fig:paraboliccompression}(a). These new disks are equivalent to $D$ under parabolic compression.

\begin{definition}
An essential product disk $D$ in $P_A$ is called 
\begin{enumerate}
\item \emph{simple} if $D$ is the boundary of a regular neighborhood of a white bigon face of $P_A$,
\item \emph{semi-simple} if $D$ is equivalent under parabolic compression to a union of simple disks (but $D$ is not simple),
\item \emph{complex} if $D$ is neither simple nor semi-simple.
\end{enumerate}
\label{def:simplesemicomplex}
\end{definition}

We see that the EPDs in Figure~\ref{fig:paraboliccompression}(b) are simple, and the EPD in Figure~\ref{fig:paraboliccompression}(a) is semi-simple. The following lemma identifies semi-simple EPDs more generally, using a diagrammatic condition.

\begin{lem}\label{lem:semisimple}
Let $D$ be an essential product disk in $P_A$ that is not simple. Then $D$ is semi-simple if its boundary bounds a region in the projection plane that contains only bigon white faces of $P_A$.
\end{lem}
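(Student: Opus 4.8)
The plan is to induct on the number $n$ of white bigon faces contained in the region $R$ in the projection plane bounded by $\partial D$, peeling off a single \emph{simple} disk at each step via one parabolic compression. First I would record the local combinatorial picture forced by the hypothesis: since every white face inside $R$ is a bigon, and white faces of $P_A$ never share an edge (Lemma~\ref{lem:checkerboardidealpolyhedron}), these bigons meet one another only at ideal vertices and are stacked in a single twist region. The two shaded faces through which $\partial D$ runs (the green and gold faces of Lemma~\ref{lemma:pullnormalsquare}) then lie on the two sides of this stack, while ideal vertices of $P_A$ separate consecutive bigons and cap the two ends of the stack.

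The base case is $n=1$. An EPD bounding a region whose only white face is a single bigon is, after normalizing as in Lemma~\ref{lemma:pullnormalsquare}, the boundary of a regular neighborhood of that bigon, hence simple by Definition~\ref{def:simplesemicomplex}(1). In particular, the assumption that $D$ is not simple forces $n\geq 2$; this is precisely where the ``not simple'' hypothesis enters the argument.

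For the inductive step ($n\geq 2$) I would construct an explicit parabolic compression disk $E$ for $D$ in the sense of Definition~\ref{def:pcdisk}, straddling one end of the stack: take $E$ so that $\partial E$ consists of a subarc of $\partial D$ encircling the first bigon together with a compressing arc that crosses a shaded face and passes once, transversely, through the parabolic locus at the ideal vertex separating the first and second bigons. Surgering $D$ along $E$ (Definition~\ref{def:equivalentunderp}) splits $D$ into $D'$, a regular neighborhood of that single end bigon and hence simple, and $D''$, an EPD whose boundary bounds the region $R''$ obtained from $R$ by deleting the end bigon, so that $R''$ contains exactly $n-1$ white bigons and no other faces. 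By the inductive hypothesis $D''$ is equivalent under parabolic compression to a union of simple disks; since $D$ is equivalent to $D'\cup D''$ and equivalence under parabolic compression is transitive, $D$ is equivalent to a union of simple disks. As $D$ is not simple, it is therefore semi-simple by Definition~\ref{def:simplesemicomplex}(2).

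The main obstacle is the inductive step, and specifically verifying that the chosen arc really yields a \emph{parabolic} compression disk rather than a trivial one: I must check condition (iii) of Definition~\ref{def:pcdisk}, namely that $E\cap D$ is not parallel in $D$ to a boundary arc containing at most one component of $D\cap P$, and I must confirm that both surgered pieces $D'$ and $D''$ are honest EPDs, each with boundary consisting of two shaded arcs and two points on the parabolic locus. This requires a careful reading of the tentacle structure of the shaded faces near a twist region, using Lemmas~\ref{lemma:Pdecomp-ppm} and~\ref{lemma:Pdecomp-pmpm}, to ensure the compressing arc can be pushed off the parabolic locus into a normal square as in Lemma~\ref{lemma:pullnormalsquare} without introducing extra intersections with $H_A$. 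The delicate bookkeeping is tracking which ideal vertex the compressing arc runs to and on which shaded side it crosses, so that the residual disk $D''$ inherits exactly the stack of $n-1$ bigons and the induction closes cleanly.
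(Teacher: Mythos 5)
Your proof is correct and takes essentially the same approach as the paper: both arguments locate a white bigon adjacent to an ideal vertex where $\partial D$ switches shaded faces, compress it off along a parabolic compression disk whose arc runs through the bigon's opposite vertex, and repeat until a single bigon (hence a simple EPD) remains — the paper merely phrases your induction as ``repeat the process until one bigon is left.'' The only differences are cosmetic: your explicit ``single stack of bigons'' observation and your verification of condition (iii) of Definition~\ref{def:pcdisk} are left implicit in the paper's argument.
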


\begin{proof}
Suppose $D$ is an EPD in $P_A$ whose boundary contains only white bigon faces on one side, say the inside. The boundary of $D$ must run through two shaded faces with two colors, say green and gold, and it switches between colors by running over an ideal vertex $v$. Because there are only bigon white faces to the inside, the white face adjacent to $v$ must be a bigon. The bigon face must have edges also meeting green and orange shaded faces. Thus there is an arc with endpoints on $\partial D$ running through the ideal vertex of the bigon opposite $v$. See Figure~\ref{fig:semisimpleEPD}.

\begin{figure}
\centering 
\includegraphics{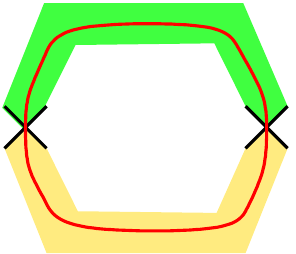}
\hspace{.2in}
\includegraphics{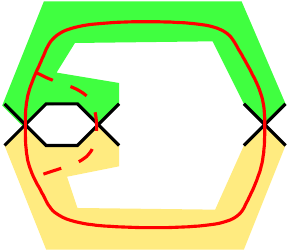}
\hspace{.2in}
\includegraphics{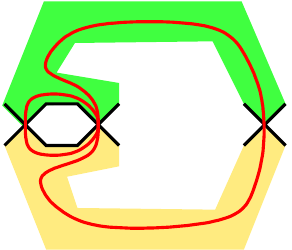}
\caption{An EPD bounding bigons to the interior, left, must have ideal vertex adjacent to a bigon, middle, which allows us to parabolically compress, right.}
\label{fig:semisimpleEPD}
\end{figure}
 
This arc defines a parabolic compression disk for $D$. Thus $D$ is equivalent under parabolic compression to two EPDs, one encircling the bigon, and one $D'$, whose boundary agrees with $D$ except excludes this bigon. Now $D'$ is an EPD bounding one fewer bigon in its interior. We may repeat the above process, compressing off another bigon. There are finitely many bigons in this region, so we continue until there is only one left. Then $D$ is equivalent under parabolic compression to the union of simple EPDs. 
\end{proof}

\begin{lem}[Lemma~5.8 of \cite{FKP}]
There exists a set $E_s \cup E_c$ of essential product disks in $P_A$ such that
\begin{enumerate}
\item $E_s$ is the set of all simple disks in $P_A$.
\item $E_c$ consists of complex disks. Further, $E_c$ is minimal in the sense that no disk in $E_c$ is equivalent under parabolic compression to a subcollection of $E_s \cup E_c$.
\item $E_c$ is also maximal in the sense that if any complex disks are added to $E_c$, then $E_c$ is no longer minimal.
\end{enumerate}
\label{lem:definitionE_c}
\end{lem}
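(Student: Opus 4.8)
The plan is to construct $E_c$ as a maximal element of the poset of all \emph{minimal} collections of complex disks, so that property~(3) becomes a tautological consequence of maximality while property~(2) holds simply by membership in the poset. First I would set $E_s$ to be the set of all simple disks. Since each simple disk is, by Definition~\ref{def:simplesemicomplex}, the boundary of a regular neighborhood of a white bigon face of $P_A$, and since the finite polyhedron $P_A$ has only finitely many white faces, $E_s$ is a well-defined finite set; this gives property~(1) immediately.

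Next I would introduce the partial order. Call a collection $X$ of complex EPDs in $P_A$ \emph{admissible} if no disk $D \in X$ is equivalent under parabolic compression to a subcollection of $(E_s \cup X) \setminus \{D\}$; this is exactly condition~(2) with $E_c$ replaced by $X$. The empty collection is admissible, and admissible collections are partially ordered by inclusion. I would then aim to produce a maximal admissible collection $E_c$: taking this as $E_c$, property~(2) holds because $E_c$ is admissible, and property~(3) holds because any strictly larger collection $E_c \cup \{D\}$ must fail to be admissible by maximality, which is precisely the statement that $E_c$ is no longer minimal once the complex disk $D$ is added.

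The existence of a maximal admissible collection is where the work lies. The cleanest route is to invoke Zorn's lemma, for which I must check that every chain $\{X_\alpha\}$ of admissible collections has its union $\bigcup_\alpha X_\alpha$ as an upper bound inside the poset, i.e.\ that the union is again admissible. The key point, which I expect to be the main obstacle, is that the relation ``$D$ is equivalent under parabolic compression to a subcollection'' is \emph{finitary}: each such equivalence is witnessed by a finite sequence of surgeries in the sense of Definition~\ref{def:equivalentunderp}, and hence involves only finitely many disks. Thus if some $D \in \bigcup_\alpha X_\alpha$ were equivalent to a subcollection $\mathcal{C} \subseteq (E_s \cup \bigcup_\alpha X_\alpha)\setminus\{D\}$, then $\mathcal{C} \cup \{D\}$ would be finite, and since the $X_\alpha$ form a chain there would be a single $X_\gamma$ containing $D$ together with all members of $\mathcal{C}$ lying in the $X_\alpha$; this would contradict the admissibility of $X_\gamma$. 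Hence the union is admissible, Zorn's lemma applies, and a maximal admissible $E_c$ exists.

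Finally I would remark that, because any EPD can be pulled into a normal square by Lemma~\ref{lemma:pullnormalsquare} and a fixed polyhedron supports only finitely many normal squares up to isotopy, the set of complex disks under consideration is in fact finite. In that case the appeal to Zorn can be replaced by simply selecting a maximal admissible subset among finitely many candidates, with the same finitary verification guaranteeing that maximal admissible collections exist. Either way the resulting pair $E_s \cup E_c$ satisfies properties~(1)--(3).
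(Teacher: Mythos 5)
First, a structural point: this paper never proves Lemma~\ref{lem:definitionE_c} at all. It is imported verbatim from \cite{FKP} (Lemma~5.8 there), and the sentence immediately following it says the lemma is ``taken as a definition'' of $E_c$. So the only meaningful comparison is with the proof in \cite{FKP}, and against that proof your proposal, while formally sound for the statement as literally worded, constructs a genuinely weaker object.

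Your order-theoretic argument itself is fine: parabolic compression equivalence is finitary (each surgery in Definition~\ref{def:equivalentunderp} produces two disks, and an equivalence is a finite iteration), so unions of chains of admissible collections are admissible, Zorn applies, and a maximal admissible collection satisfies (1)--(3) as stated. The gap is in what the lemma is for. In \cite{FKP}, $E_c$ is built in the opposite order: one first shows that the (finitely many, up to isotopy) EPDs in $P_A$ are \emph{spanned} by $E_s$ together with the complex disks, and then takes $E_c$ to be a subcollection of complex disks that is \emph{minimal subject to the constraint that $E_s \cup E_c$ still spans every EPD in $P_A$}. Minimality (2) follows because a disk compressible to the others could be deleted without losing spanning, and maximality (3) is automatic because any complex disk one might add is already spanned. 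Your construction (maximize subject to minimality) does not recover the spanning property: when $E_c \cup \{D\}$ fails to be minimal, the witness may be an old disk $D' \in E_c$ that compresses to a collection \emph{containing} $D$, which says nothing about $D$ being compressible to a subcollection of $E_s \cup E_c$. Since Theorem~\ref{thm:estimateguts} explicitly takes $||E_c||$ to be ``the number of essential product disks required to span the upper polyhedron,'' and Propositions~\ref{prop:++-prop} and~\ref{prop:+-+-prop} are bounds on that spanning count, an $E_c$ with properties (1)--(3) but without spanning need not be the quantity the volume bound requires. A second, smaller issue: your argument only makes sense if disks are taken up to isotopy (equivalently, if isotopy is absorbed into the equivalence relation). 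Otherwise parallel copies of a complex disk are never ``equivalent under parabolic compression'' to one another, every admissible collection can be enlarged by adding parallel copies, and your maximal $E_c$ is forced to be infinite, rendering $||E_c||$ meaningless. Your closing remark about finitely many normal squares shows you intend to work with isotopy classes; that hypothesis should be stated up front, and it is also exactly the finiteness input that lets \cite{FKP} select a minimal spanning subcollection from a finite set with no appeal to Zorn.
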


We take Lemma~\ref{lem:definitionE_c} as a definition of the minimal set $E_c$. The number of EPDs in $E_c$ is denoted $||E_c||$. This is the term we wish to bound in Equation~\eqref{eqn:FKPVol}. It is not easy to tell directly from Definitions~\ref{def:pcdisk}, \ref{def:equivalentunderp}, and Lemma~\ref{lem:definitionE_c} whether a set of complex disks $E_c$ is minimal. The following lemma provides an easier way to characterize minimality.

\begin{lem}\label{lem:equivalentunderparaboliccompression}
Let $E_s$ be the set of all simple EPDs in $P_A$, and let $E_1$ and $E_2$ be EPDs in $P_A$. Then $E_2$ is equivalent under parabolic compression to a subcollection of $E_s \cup E_1$ if $\partial E_1$ and $\partial E_2$ differ only by white bigons, i.e.\ one of the regions $S$ bounded by $\partial E_1$ and one of the regions $T$ bounded by $\partial E_2$ are such that $(S\cup T) \setminus (S\cap T)$ contains no non-bigon white faces. 
\end{lem}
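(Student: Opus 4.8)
The plan is to mimic the proof of Lemma~\ref{lem:semisimple}, peeling the white bigons of the symmetric difference off one at a time, each peel being a parabolic compression that splits off a simple disk. Write $R = (S \cup T)\setminus(S\cap T)$ for the symmetric difference of the two regions; by hypothesis every white face of $R$ is a bigon. I would induct on the number of white bigon faces of $R$. In the base case $R$ has no white bigons, so $\partial E_1$ and $\partial E_2$ bound the same region, hence are isotopic in $P_A$, giving $E_2 = E_1$ and the claim trivially.

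For the inductive step I would first locate a white bigon $B$ of $R$ adjacent to one of the two boundary curves along one of its edges; such a $B$ exists because $R$ is a nonempty union of bigons whose frontier lies in $\partial E_1 \cup \partial E_2$, so an outermost such bigon along, say, $\partial E_2$ works. If $B \subseteq T \setminus S$, then $B$ lies inside $E_2$ but outside $E_1$, and the local configuration is exactly that of Lemma~\ref{lem:semisimple}: the ideal vertex of $B$ opposite the point where $\partial E_2$ crosses over it provides a parabolic compression disk for $E_2$. Surgering along it replaces $E_2$ by a simple disk encircling $B$ together with an EPD $E_2'$ whose boundary no longer encloses $B$. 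Now $\partial E_2'$ and $\partial E_1$ differ by one fewer bigon, so the inductive hypothesis applies to the pair $(E_1, E_2')$, and appending the simple disk keeps the conclusion a subcollection of $E_s \cup \{E_1\}$. If instead $B \subseteq S \setminus T$, I would peel $B$ off $E_1$ in the same way, obtaining a relation $E_1 \sim \{E_1', P_B\}$ with $E_1'$ bounding $S\setminus B$ and $P_B$ simple, and then reconcile the two collections through the common disk bounding $S\cap T$, to which both $E_1$ and $E_2$ compress.

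The main obstacle is precisely this last reconciliation, namely the bigons lying in $S\setminus T$ that are present in $E_1$ but not in $E_2$. Removing a bigon from $E_2$ is a parabolic compression and is unproblematic, but ``installing'' into $E_2$ a bigon that $E_1$ already carries is the reverse operation and, read naively, would require the corresponding simple disk to be supplied in advance. I would handle this by compressing both $E_1$ and $E_2$ down to the single EPD $E'$ whose boundary bounds $S\cap T$; this disk is the same from either side, since an EPD is determined by the region it bounds, so that $E_2 \sim E' \cup \{\text{simple disks for } T\setminus S\}$ while $E_1 \sim E' \cup \{\text{simple disks for } S\setminus T\}$, and then I would trace the resulting chain of compressions between the two sides. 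Verifying at each stage that the chosen bigon is genuinely adjacent to the boundary with its opposite ideal vertex free, so that the parabolic compression disk of Lemma~\ref{lem:semisimple} actually exists, is the one genuinely geometric point; the remainder is bookkeeping on the collections of disks.
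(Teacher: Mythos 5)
Your approach is in the same spirit as the paper's (peel the bigons of the symmetric difference off one at a time, each peel splitting off a simple disk), but the proposal has two genuine gaps. The central one is your reconciliation step. You end with $E_2$ compressing to $E' \cup \{\text{simple disks}\}$ and $E_1$ compressing to $E' \cup \{\text{simple disks}\}$, where $E'$ bounds $S \cap T$. But $E'$ is in general neither simple nor isotopic to $E_1$, so this is \emph{not} a decomposition of $E_2$ into a subcollection of $E_s \cup \{E_1\}$, and Definition~\ref{def:equivalentunderp} provides no mechanism for cancelling the auxiliary simple disks from both sides of the two relations to recover the stated conclusion. Moreover, the obstacle that pushed you to this meet-in-the-middle scheme rests on a misreading of parabolic compression: a compression disk may lie on either side of the EPD, so surgering $E_2$ along a bigon $B$ lying \emph{outside} its region is still a legitimate single compression --- it splits off a simple disk around $B$ while the other new disk bounds $T \cup B$. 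Once both shrinking and growing moves are allowed, one simply compresses $E_2$ step by step into a disk isotopic to $E_1$ together with simple disks, shrinking through $T\setminus S$ and growing through $S \setminus T$; this is effectively what the paper does (see Figures~\ref{fig:disjoint1} and~\ref{fig:nondisjoint1}), and it removes any need for the intermediate disk $E'$.

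The second gap is the existence of the compressions at each stage, which you explicitly defer but which is the actual content of the paper's proof. An ``outermost bigon adjacent to $\partial E_2$ along one of its edges'' need not admit any compression: in a string of bigons, a bigon in the middle of the chain touches the curve only along edges, $\partial E_2$ crosses over neither of its ideal vertices, and so there is no compression arc through an opposite vertex. The compression of Lemma~\ref{lem:semisimple} exists only at a bigon sitting at an ideal vertex of the curve, where the boundary switches shaded faces. The paper produces such a bigon via checkerboard coloring (Lemma~\ref{lem:checkerboardidealpolyhedron}): the white face at each vertex of $\partial E_1$ either is shared with $\partial E_2$ or must be a bigon of the difference region; and it then uses simple connectivity of the shaded faces to show the chain of peeled bigons must terminate at a vertex of $\partial E_2$ rather than loop back to $\partial E_1$. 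Without these two points your induction has no guaranteed first step and no guarantee of progress toward $\partial E_1$. Finally, your argument never treats the case $\partial E_1 \cap \partial E_2 \neq \emptyset$ (your base case ``same region, hence isotopic'' also silently assumes disjointness); the paper handles intersections separately, pushing off pairs of intersection points that lie in a common shaded face and running the chain argument on each component of the symmetric difference.
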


\begin{proof}
Consider first the case that $\partial E_1$ and $\partial E_2$ do not intersect.
If $S$ and $T$ are disjoint, then both $\partial E_1$ and $\partial E_2$ bound disjoint regions containing only white bigon faces, so both $E_1$ and $E_2$ are semi-simple. Thus $E_2$ is equivalent under parabolic compression to EPDs in $E_s$ alone, hence is equivalent to a subcollection of $E_s \cup E_1$.

If $\partial E_1$ and $\partial E_2$ do not intersect, but $S$ and $T$ are not disjoint, then $(S\cup T) \setminus (S\cap T)$ is an annular region of the projection plane between $\partial E_1$ and $\partial E_2$, and no non-bigon white faces lie in this region. This is shown in Figure~\ref{fig:disjoint1}(a).

\begin{figure}
\centering
\begin{tabular}{ccc}
\includegraphics{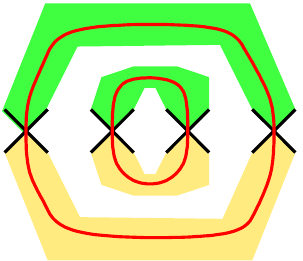} &
\includegraphics{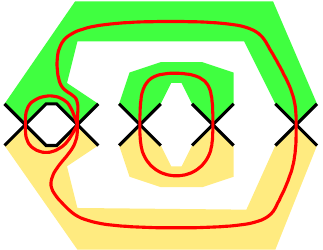} &
\includegraphics{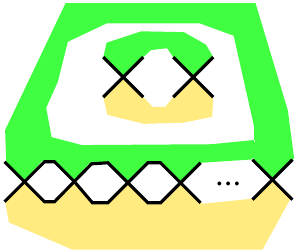} \\
(a) & (b) & (c)
\end{tabular}
\caption{Region between disjoint $\partial E_1$ and $\partial E_2$ conists of bigons.}
\label{fig:disjoint1}
\end{figure}

Focus on the left--most vertex, on $\partial E_1$. By Lemma~\ref{lem:checkerboardidealpolyhedron}, $P_A$ is checkerboard colored, hence the face to the right of this vertex is a white face. Therefore, either this face lies inside $\partial E_1$ and $\partial E_2$, and the two EPDs both meet this ideal vertex, or the face to the right must be a bigon. If the face is a bigon, we find a parabolic compression disk whose parabolic compression arc is the line running from $\partial E_1$ through the opposite vertex of the bigon. Therefore, $\partial E_1$ is parabolically compressible to a union of two EPDs, one of which is simple, as in Figure~\ref{fig:disjoint1}(b). Replace $E_1$ with the other EPD. Then we are in the same situation as before, only with one fewer bigon between boundaries of the EPDs. Repeat, compressing off bigons until there are no more. We obtain a finite chain of bigons, which must connect with one of the other three vertices in the figure. In Figure~\ref{fig:disjoint1}(c), the chain connects to the other vertex on $\partial E_1$. However this creates a contradiction, since a curve following the arc of the boundary of $E_1$ through the green, then running across the tops of the bigons in the green, gives a simple closed curve that does not bound a disk in the green face. This contradicts the fact that each shaded face of $P_A$ is simply connected. Therefore the chain must connect with one of the vertices on $\partial E_2$. (Note this implies also that the faces that $\partial E_1$ and $\partial E_2$ meets must both be the same color, orange and green, as we have illustrated in the figures.) The same process may be repeated for the other two vertices. Then the faces between $\partial E_1$ and $\partial E_2$ consist of strings of adjacent bigons, hence $E_1$ is equivalent under parabolic compression to a subset of $E_s \cup E_2$.

Now suppose $\partial E_1$ and $\partial E_2$ intersect. In this case the region $(S\cup T)\setminus (S\cap T)$ consists of one or more connected components. Take one of these connected components, say $H$. Then $\partial H$ consists of an arc of $\partial E_1$ and an arc of $\partial E_2$, with two intersections of $\partial E_1$ and $\partial E_2$. Suppose these intersections both take place in the green face. Since the green face is simply-connected, we can push $E_1$ and $E_2$ to remove both of these intersections. 

\begin{figure}
\centering
\begin{tabular}{ccc}
\includegraphics{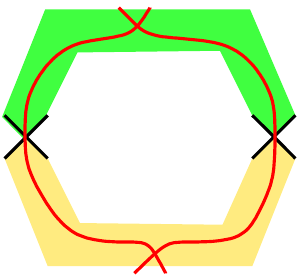} & 
\includegraphics{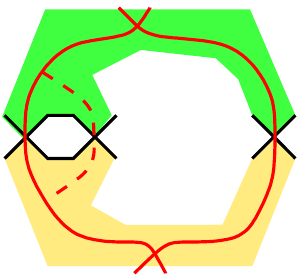} &
\includegraphics{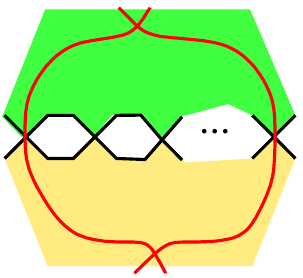} \\
(a) & (b) & (c) 
\end{tabular}
\caption{When $\partial E_1$ and $\partial E_2$ are not disjoint}
\label{fig:nondisjoint1}
\end{figure}

Thus we assume that the intersections of $\partial E_1$ and $\partial E_2$ occur in two different faces, green and orange as in Figure~\ref{fig:nondisjoint1}(a). Here the black lines represent edges of $P_A$. As before, by Lemma~\ref{lem:checkerboardidealpolyhedron}, the boundary of an EPD can only move from one colored face to another at an ideal vertex of $P_A$. Suppose $H$ is the region in the middle of this figure, so there are no non-bigon white faces in this region. The face just to the right of the vertex at the left of the figure is a white face, which must be a bigon, as in Figure~\ref{fig:nondisjoint1}(b). Also shown is a compression arc corresponding to a parabolic compression disk for $E_1$. Therefore $E_1$ is equivalent under parabolic compression to two new EPDs, one of which is simple. Then we can repeat this process, compressing off simple EPDs in a chain until the chain connects to the other vertex of $P_A$ (Figure~\ref{fig:nondisjoint1}(c)). Thus in this region, $E_1$ is equivalent under parabolic compression to a subset of $E_s \cup E_2$. 
Repeat the above arguments for each connected component of $\mathcal{H}$. We either push the intersections off, or compress bigons off of $E_1$ until $E_1$ is equivalent to a subset of $E_s \cup E_2$.
\end{proof}

Using Lemma~\ref{lem:equivalentunderparaboliccompression} we may more easily find a minimal set $E_c$ and therefore calculate $||E_c||$. For certain Montesinos links, $||E_c||$ is already known. The following is due to Futer, Kalfagianni, and Purcell \cite[Proposition~8.16]{FKP}.

\begin{thm}\label{thm:threepositive}
Let $D(K)$ be a reduced, admissible, non-alternating Montesinos diagram with at least three positive tangles. Then $||E_c|| = 0$.
\end{thm}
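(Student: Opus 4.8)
The plan is to show that \emph{every} essential product disk in $P_A$ is either simple or semi-simple; by Definition~\ref{def:simplesemicomplex} together with the minimality clause of Lemma~\ref{lem:definitionE_c}, this forces the minimal set of complex disks to be empty, so that $||E_c|| = 0$. In view of Lemma~\ref{lem:semisimple}, it therefore suffices to prove that for any non-simple EPD $E$, one of the two regions of the projection plane cut off by $\partial E$ contains only white bigon faces of $P_A$. I would fix such an $E$ and, using Lemma~\ref{lemma:pullnormalsquare}, pull $\partial E$ into a normal square: two opposite sides run through two shaded faces, colored green and gold, and the other two sides each cut off a single white vertex with clockwise coloring gold-white-green. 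Thus $\partial E$ meets the diagram by passing through exactly two ideal vertices of $P_A$, switching between the green and gold shaded faces at each, and the problem becomes purely combinatorial: locate these two vertices and identify the white faces trapped between the two shaded arcs.

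Next I would use the explicit description of $H_A$ near a positive admissible tangle given in Lemma~\ref{lemma:HAPosAdmissibleTangle} to describe the shaded faces and their tentacles. The decisive structural feature of the all-positive case is that in every tangle the tentacles flow in one consistent direction (south and then east), so that, following the green arc in one sense and the gold arc in the other, the closed curve $\partial E$ acquires a coherent orientation. Since each shaded face is simply connected and white faces never share an edge (Lemma~\ref{lem:checkerboardidealpolyhedron}), the boundary can change color only at an ideal vertex, and the non-bigon white faces occur only in the specific positions dictated by Lemma~\ref{lemma:HAPosAdmissibleTangle} (beneath the $a_{n-1}$ northern segments of each tangle). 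I would then run a case analysis on the two ideal vertices through which $\partial E$ passes: (i) both vertices lie in a single tangle, (ii) they lie in two distinct tangles, and (iii) configurations involving the innermost disks where the tangles meet. In each case one traces the two shaded arcs along the tentacles and verifies directly that the enclosed region consists only of the white bigons inside the twist regions, whence Lemma~\ref{lem:semisimple} applies.

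The main obstacle is precisely the cross-tangle and central configurations of case (iii): one must rule out an EPD whose boundary leaves one tangle, crosses a central innermost disk, enters a second tangle, and returns, enclosing a non-bigon white face. The consistent south-then-east flow of tentacles forced by the all-positive, admissible, reduced hypotheses is exactly what forecloses this, because returning to the starting shaded face without trapping a non-bigon is possible only along a chain of bigons; any attempt to enclose a non-bigon face requires $\partial E$ to re-enter a shaded face against this flow, contradicting either the simple-connectivity of that shaded face or the checkerboard coloring of Lemma~\ref{lem:checkerboardidealpolyhedron}. Having at least three positive tangles guarantees this orientation is never broken by a sign change, which is what distinguishes this case from the mixed-sign links treated later in the paper. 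Once every EPD is shown to bound only bigons on one side, each is simple or semi-simple, no complex disk survives in a minimal $E_c$, and hence $||E_c|| = 0$.
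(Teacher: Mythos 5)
First, a point of orientation: the paper does not actually prove this statement; it quotes it from Futer--Kalfagianni--Purcell \cite[Proposition~8.16]{FKP}, and the structure of that cited proof is what is echoed in the paper's own Lemmas~\ref{lem:2edgeloops} and~\ref{lem:vertices} and in the case analyses of Section~\ref{sec:epds}. Measured against that argument, your proposal has a genuine and fatal gap: you have misread the hypothesis. A reduced Montesinos diagram in which \emph{every} tangle is positive is alternating (the paper notes this immediately after Theorem~\ref{thm:lickorish}), so the hypothesis ``non-alternating with at least three positive tangles'' forces the diagram to contain at least one \emph{negative} tangle. Your entire argument rests on ``the all-positive case,'' on tentacles flowing ``in one consistent direction'' in every tangle, and on the claim that having at least three positive tangles ``guarantees this orientation is never broken by a sign change.'' In the actual setting this premise is simply false: negative tangles are necessarily present, and they are precisely where the candidate complex EPDs live. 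By Lemma~\ref{lem:2edgeloops}, a complex EPD must either be type~(1), running through a negative tangle of slope $-1 \leq q \leq -1/2$ along segments meeting an interior state circle $I$, or type~(2), spanning \emph{exactly two} positive tangles. With three or more positive tangles, type~(2) is impossible, so the entire content of the theorem is the analysis of type~(1) disks running through negative tangles --- configurations your argument never examines, because your structural picture excludes negative tangles altogether.

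Beyond this, the step where you assert that enclosing a non-bigon white face ``requires $\partial E$ to re-enter a shaded face against this flow, contradicting either the simple-connectivity of that shaded face or the checkerboard coloring'' is not an argument; it is a restatement of the conclusion. The actual proof (in \cite{FKP}, and mirrored in this paper's Lemmas~\ref{lem:++-type1EPDs} and~\ref{lem:+-+-type1EPDs} for the harder two-positive-tangle cases) pulls $E$ into a normal square, classifies the possible white sides into the types (1N), (1S), (2N), (2S), (3N), (3S) of Lemma~\ref{lem:vertices}, uses the Two-Face Argument (Lemma~\ref{lemma:twoface}) to pin down where the two shaded faces originate, and then checks case by case that every resulting disk bounds only bigons on one side. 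That this case analysis closes with $||E_c||=0$ is special to having three or more positive tangles: the paper's own $++-$ and $+-+-$ analyses show that with only two positive tangles the very same procedure produces disks that genuinely are complex. So while your outer skeleton (show every EPD is simple or semi-simple via Lemma~\ref{lem:semisimple} and normal squares) does match the cited proof, the substance you propose to fill it with addresses a different, vacuous case and cannot be repaired without doing the negative-tangle case analysis from scratch.
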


\subsection{Finding complex EPDs}
We now want to bound $||E_c||$ for classes of Montesinos links that do not fit under the umbrella of Theorem~\ref{thm:threepositive}. We will do so directly by finding a minimal spanning set of complex EPDs in $P_A$. The following results show how we might begin to look for complex EPDs.

\begin{lem}\label{lem:2edgeloops}
Let $D(K)$ be a reduced, admissible, $A$--adequate, non-alternating Montesinos diagram. Let $E$ be a complex EPD in $P_A$. Then either 
\begin{enumerate}
\item[(1)] $\partial E$ runs through a negative tangle $N_i$ of slope $-1 \leq q \leq -1/2$, along segments of $H_A$ that connect a single state circle, $I$, to the north and south sides of the tangle, as in Figure~\ref{fig:specnegtangle}, or
\item[(2)] there are exactly two positive tangles $P_1$ and $P_2$, and $\partial E$ runs along segments of $H_A$ that run through $P_1$ and $P_2$ from east to west.
\end{enumerate}
\end{lem}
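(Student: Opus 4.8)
The plan is to put $\partial E$ into the normal square form of Lemma~\ref{lemma:pullnormalsquare} and then read off, from the combinatorics of $H_A$ inside the tangles, the only places where such a square can close up around a non-bigon white face. First I would normalize: after pulling $\partial E$ off the parabolic locus we may assume it is a normal square whose two shaded sides run through a green face and a gold face and whose two white sides each cut off a single ideal vertex, with clockwise coloring gold--white--green at each cut-off triangle. Thus $\partial E$ switches between green and gold at exactly two ideal vertices, that is, at two segments $v_1,v_2$ of $H_A$: the green arc joins $v_1$ to $v_2$ through one shaded face, and the gold arc joins them through another. In particular both the green and the gold face are adjacent to each of $v_1$ and $v_2$, and since each shaded face is simply connected (as used in the proof of Lemma~\ref{lem:equivalentunderparaboliccompression}) the whole square is determined, up to isotopy, by the pair $\{v_1,v_2\}$ together with the two shaded faces meeting both. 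This is the sense in which a complex EPD is a ``two--edge loop'' in $H_A$.

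Second, I would feed in complexity. Since $E$ is complex it is neither simple nor semi-simple, so by the contrapositive of Lemma~\ref{lem:semisimple} the curve $\partial E$ cannot bound a region of the projection plane consisting only of white bigon faces; a non-bigon white face must lie in the region it encloses. This is exactly the constraint that forces $v_1,v_2$ and the two arcs to span a substantial piece of $H_A$ rather than a chain of bigons, and it is what rules out the configurations that would make $E$ semi-simple.

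Third, I would localize to tangles and run the case analysis. By Lemmas~\ref{lemma:HAPosAdmissibleTangle} and~\ref{lemma:HANegAdmissibleTangle}, inside a positive tangle every segment of $H_A$ is horizontal and the shaded faces run east--west, while inside a negative tangle every segment is vertical and the shaded faces run north--south; and Lemmas~\ref{lemma:Pdecomp-ppm} and~\ref{lemma:Pdecomp-pmpm} record precisely which white faces are bigons. Tracing the green and gold tentacles back to their innermost disks of origin, I would enumerate the pairs $\{v_1,v_2\}$ that can simultaneously be adjacent to a common green and a common gold face, splitting according to whether the two arcs stay inside one tangle or cross between tangles. If they remain inside a single negative tangle, following vertical segments from one central state circle $I$ to the north and south sides of the tangle, then the requirement that a non-bigon face be enclosed forces the tangle into the special shape of Figure~\ref{fig:specnegtangle}, which translates into the slope restriction $-1\le q\le -1/2$ of alternative~(1). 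If instead the arcs run along east--west segments of positive tangles, then enclosing a non-bigon face forces them to traverse two distinct positive tangles, and since Theorem~\ref{thm:threepositive} rules out complex EPDs whenever there are three or more positive tangles, we must be in the case of exactly two, giving alternative~(2). Every other configuration either cannot be joined through two common shaded faces, or bounds an all-bigon region on one side and hence yields a semi-simple disk, so all are discarded.

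The main obstacle is precisely this third step: the bookkeeping of tentacle routes. The delicate part is to rule out complex EPDs that wander among several tangles, mixing horizontal and vertical segments, yet still close up while enclosing a non-bigon white face on the enclosed side; and then to extract the exact numerical data --- the slope range $-1\le q\le -1/2$ in~(1) and ``exactly two positive tangles'' in~(2) --- from the bigon classification of Lemmas~\ref{lemma:Pdecomp-ppm} and~\ref{lemma:Pdecomp-pmpm}. I expect this to require a careful, figure-driven enumeration of how the green and gold arcs can enter and leave each tangle across its bounding state circles.
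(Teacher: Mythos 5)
There is a genuine gap: your proof is a plan whose decisive step is never carried out. The paper's own proof is short and citation-based: it invokes \cite[Corollary~6.6]{FKP} (the boundary of a complex EPD must run over tentacles adjacent to the segments of a 2--edge loop of $\G_A$), then \cite[Lemma~8.14]{FKP} (for Montesinos diagrams such loops come in exactly three forms: twist-region loops, loops in a negative tangle of slope $-1\le q\le -1/2$ through a circle $I$, and loops across exactly two positive tangles), disposes of the twist-region form via \cite[Lemma~5.17]{FKP} (bigons in the short resolution of a twist region can be removed without changing $||E_c||$), and gets the ``runs through $I$'' refinement from \cite[Lemma~8.15]{FKP}. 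Your step~3 proposes to re-derive all of this content by direct enumeration of tentacle routes, but you explicitly defer that enumeration (``I expect this to require a careful, figure-driven enumeration''). That enumeration \emph{is} the lemma; without it you have restated the claim, not proved it. In particular, the two numerical conclusions --- the slope range $-1\le q\le -1/2$ in~(1) and the east--west/two-positive-tangle structure in~(2) --- are asserted to ``translate'' from Figure~\ref{fig:specnegtangle} and the bigon lemmas, but no derivation is given, and nothing in your steps~1--2 produces them.

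Two further specific problems. First, your step~1 reduction is weaker than what is needed: knowing that $\partial E$ switches colors at two ideal vertices $v_1,v_2$ and that shaded faces are simply connected (Lemma~\ref{lemma:twoface}) does \emph{not} show that the relevant segments form a 2--edge loop in $\G_A$, i.e.\ two segments joining the \emph{same pair} of state circles; tentacles wander far from their originating circles, and establishing the 2--edge-loop property is exactly the nontrivial content of \cite[Corollary~6.6]{FKP}. Relatedly, your blanket dismissal ``bounds an all-bigon region on one side, hence semi-simple'' does not dispose of the twist-region loops: a complex EPD may run along tentacles adjacent to such a loop as part of a longer route without bounding only bigons, which is why the paper needs the diagram-modification argument of \cite[Lemma~5.17]{FKP} rather than Lemma~\ref{lem:semisimple}. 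Second, your appeal to Theorem~\ref{thm:threepositive} to force ``exactly two positive tangles'' needs a bridge: that theorem says $||E_c||=0$, a statement about the minimal spanning set, and to conclude that \emph{no} complex EPD exists you must also invoke the maximality clause of Lemma~\ref{lem:definitionE_c} (a complex disk equivalent under parabolic compression to simple disks would be semi-simple, a contradiction). This can be patched, but as written it conflates ``not needed to span'' with ``does not exist.''
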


\begin{proof}
By \cite[Corollary~6.6]{FKP}, $\partial E$ runs over tentacles adjacent to segments of a 2--edge loop in $\G'_A$.  Further, \cite[Lemma~8.14]{FKP} gives three possible forms of 2--edge loops in $G_A$. The first form of loop from that lemma corresponds to crossings in a single twist region, in which the all--$A$ resolution is the short resolution. By~\cite[Lemma~5.17]{FKP}, we may remove all bigons in the short resolution of a twist region without changing $||E_c||$. Therefore we may ignore such loops. For a 2--edge loop of the second form, \cite[Lemma~8.15]{FKP} gives that $\partial E$ must run through $I$. (Notice that the first paragraph in the proof of that lemma, proving this fact, does not use the hypothesis that $D(K)$ has at least three positive tangles.) Finally, a 2--edge loop of the third form runs over exactly two positive tangles.
\end{proof}

\begin{figure}
\centering
\includegraphics{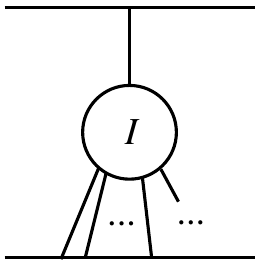}
\caption{A negative tangle with slope $-1 \leq q \leq -1/2$}
\label{fig:specnegtangle}
\end{figure}

Notice Lemma~\ref{lem:2edgeloops} also applies to an EPD that has been pulled into a normal square. This is because we can pull an EPD $E$ into a normal square without changing the segments that $\partial E$ runs along.

\begin{definition}\label{def:EPDtype}
A \emph{type~(1) EPD} is an EPD in $P_A$ described by Lemma~\ref{lem:2edgeloops}~(1); similarly for a type~(2) EPD.
\end{definition}

By Lemma~\ref{lem:2edgeloops}, every complex EPD in a Montesinos link must be either type~(1) or type~(2). This is not true for EPDs in general. Note also that an EPD may be both types~(1) and~(2).

The following lemma gives further information about type~(1) EPDs, and follows from the proof of \cite[Proposition~8.16]{FKP}.

\begin{lem} \label{lem:vertices}
Let $K$ be a $++-$ or a $+-+-$ Montesinos link, and let $E$ be a type~(1) complex EPD in $P_A$ as in Lemma~\ref{lem:2edgeloops}, with the innermost disk $I$ colored green, and $E$ pulled into a normal square as in Lemma~\ref{lemma:pullnormalsquare}.
Then the arcs of $\partial E$ running over a white face may occur only in the following places:
\begin{enumerate}
\item On the innermost disk $I$, either at the tail of a tentacle running to $I$ from the north or from the south, as in Figure~\ref{fig:type12vertices}(a) and (b).
\item At the head of a tentacle running out of the negative block containing $I$, if $\partial E$ runs from $I$ to an adjacent positive tangle, as in Figure~\ref{fig:type12vertices}(c) and (d). 
\item On the next adjacent negative block, if $\partial E$ runs downstream from $I$ to an adjacent positive tangle, then across a segment spanning the positive tangle east to west, and then downstream across the outer state circle of the next negative block. See Figure~\ref{fig:type3vertices}. This situation cannot occur for $++-$ links.
\end{enumerate}
\end{lem}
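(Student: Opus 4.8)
The plan is to locate the two white sides of the normal square directly, exploiting the tentacle structure of $P_A$ together with the orientation conventions of Lemma~\ref{lemma:pullnormalsquare}. Since $I$ is colored green, one pair of opposite sides of $\partial E$ lies in the green shaded face originating at $I$, while the other pair lies in a single gold face; the two white sides are exactly the arcs of $\partial E$ running over white faces, so the content of the lemma is a complete list of where these can occur. By property~(3) of Lemma~\ref{lemma:pullnormalsquare}, each white side cuts off one ideal vertex in a gold-white-green triangle, and the white edges occur simultaneously at the head of a green tentacle and the tail of a gold tentacle meeting at that vertex. Because $I$ is green, it therefore suffices to enumerate the ideal vertices (i.e.\ segments) at which a green tentacle emanating from $I$ can have its head while a gold tentacle simultaneously terminates.

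First I would fix the local picture of $H_A$ near $I$. The type~(1) hypothesis, via Lemma~\ref{lem:2edgeloops}(1), Lemma~\ref{lemma:HANegAdmissibleTangle}, and Figure~\ref{fig:specnegtangle}, tells us that $\partial E$ runs along the two segments of a $2$-edge loop joining the single innermost disk $I$ to the north and south sides of the negative block of slope $-1\le q\le -1/2$. The green tentacles of the face originating at $I$ run out of $I$ along these segments according to the construction of $P_A$. The first possibility is that a gold tentacle terminates at a segment attached to $I$ itself, from the north or the south, with the green head at the same segment; this is case~(1) and Figure~\ref{fig:type12vertices}(a),(b).

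Next I would follow a green tentacle downstream out of the negative block. Since the block has slope $-1\le q\le -1/2$, its outer state circle abuts an adjacent positive tangle, so a green tentacle can exit the block toward that positive tangle; the head of this tentacle as it leaves the block is the second allowed location, giving case~(2) and Figure~\ref{fig:type12vertices}(c),(d). If instead the boundary continues still further downstream — running across a segment that spans the positive tangle from east to west, then along the outer state circle of the next negative block — then the green head lands on that next negative block, which is case~(3) and Figure~\ref{fig:type3vertices}. For a $++-$ link there is only one negative tangle, hence no next negative block for the tentacle to reach, so case~(3) is vacuous; this yields the final sentence of the statement.

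The substance of the argument, and the main obstacle, is the exhaustive tracing showing that no vertex outside these three positions can arise: I must follow every green tentacle leaving $I$ through the combinatorics of the tangles, using the bigon/non-bigon structure recorded in Lemmas~\ref{lemma:Pdecomp-ppm} and~\ref{lemma:Pdecomp-pmpm}, and verify that a vertex which is simultaneously a green head and a gold tail cannot be placed anywhere else. This is precisely the bookkeeping carried out in the proof of \cite[Proposition~8.16]{FKP}, which I would adapt. The one point requiring care is that the argument there was stated under the hypothesis of at least three positive tangles, so, exactly as in the proof of Lemma~\ref{lem:2edgeloops}, I must check that the steps used to locate these vertices do not invoke that hypothesis and hence remain valid for $++-$ and $+-+-$ links.
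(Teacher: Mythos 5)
Your plan follows the same overall route as the paper: identify white sides as simultaneous green heads/gold tails, and defer the exhaustive enumeration to the case analysis in the proof of \cite[Proposition~8.16]{FKP}, checking that the hypothesis of three positive tangles is not essential. However, there is a concrete gap in how you invoke that proof. The enumeration in \cite[Proposition~8.16]{FKP} does not produce three possible locations for white arcs; it produces \emph{five}. The two you never mention, types~(4) and~(5) in that proof, occur along \emph{non-prime arcs} of the polyhedral decomposition, not at ordinary tentacle heads and tails, so no amount of tracing green tentacles emanating from $I$ through the tangle combinatorics of Lemmas~\ref{lemma:Pdecomp-ppm} and~\ref{lemma:Pdecomp-pmpm} will surface or eliminate them. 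The actual content of the paper's proof of Lemma~\ref{lem:vertices} is precisely this exclusion: non-prime arcs in the decomposition of a Montesinos diagram occur only between adjacent negative tangles (\cite[Lemma~8.7]{FKP}), and $++-$ and $+-+-$ diagrams have no adjacent negative tangles, hence no non-prime arcs, so types~(4) and~(5) cannot arise. Without this step your three-item list is not justified as exhaustive; you have flagged the wrong hypothesis to worry about (the three-positive-tangles assumption, which the paper handles separately in Lemma~\ref{lem:2edgeloops}) and missed the one that does the work.

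A smaller point: your reason for discarding case~(3) for $++-$ links (``only one negative tangle, hence no next negative block'') is essentially right but loosely stated. The precise obstruction, as in the paper, is that a type~(3) arc forces a $-+-$ string of consecutive tangles beginning at the tangle containing $I$; in a $++-$ link the block reached after crossing a \emph{single} positive tangle is always the other positive tangle, and one could only return to the negative tangle after crossing both positive tangles. Stating it via the $-+-$ string makes clear that the exclusion is about adjacency in the cyclic order, not merely about the count of negative tangles.
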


\begin{figure}
  \centering
\begin{tabular}{cccc}
  \includegraphics{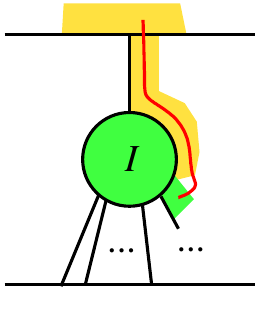} &
  \includegraphics{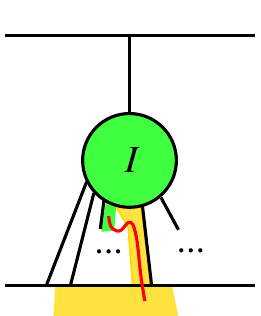} &
  \includegraphics{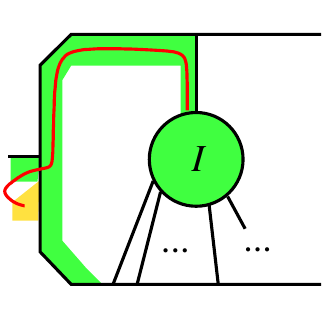} &
  \includegraphics{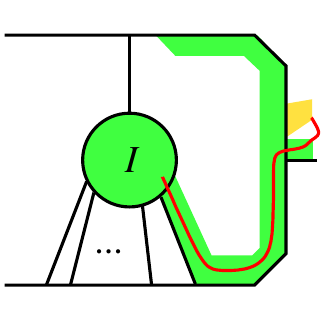} \\
  (a) North & (b) South & (c) North & (d) South
  \end{tabular}
  \caption{Type (1N) and (1S), and type (2N) and (2S) white edges}
  \label{fig:type12vertices}
\end{figure}

\begin{figure}
\centering
\begin{tabular}{ccc}
  \includegraphics{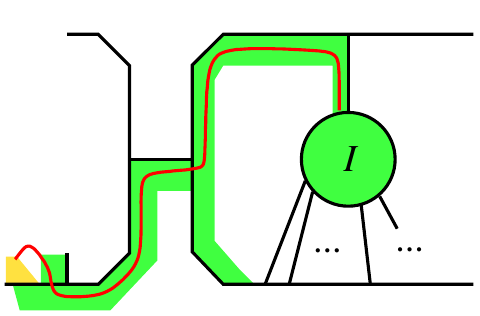} & \hspace{.1in} &
  \includegraphics{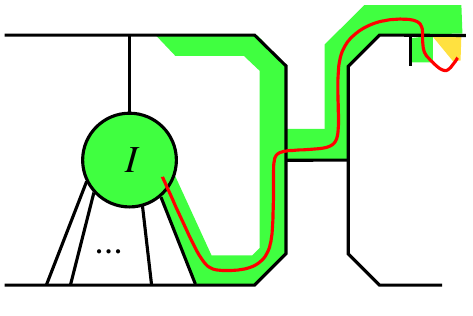} \\
  (a) Type 3 North & & (b) Type 3 South
\end{tabular}
  \caption{Type (3N) and (3S) white edges}
  \label{fig:type3vertices}
\end{figure}

We will call these white arcs of types (1N), (1S), (2N), (2S), (3N), and (3S). Lemma~\ref{lem:vertices} requires that $I$ be colored green. However, recall that given an EPD, we may choose how to label faces green and gold. Hence classifying all type~(1) EPDs will involve only considering the combinations resulting from the lemma.

\begin{proof}
In \cite[Proposition~8.16]{FKP}. In that proof, there are five types of arcs in white faces. However, types~(4) and~(5) require non-prime arcs, and we do not have any non-prime arcs in our diagrams. Notice that for a type~(3) arc, we must have a string of tangles $-+-$ moving either east or west, where the first negative tangle is the one containing $I$. This string of tangles cannot occur in a Montesinos link with two positive and one negative tangle. Thus $++-$ links may only have types~(1) and~(2) white edges.
\end{proof}

We will often obtain information about a complex EPD $E$ by the following argument. Suppose $E$ is pulled into a normal square with $\partial E$ running through a given white face, cutting off a triangle. For any given white side, the two shaded faces met by $\partial E$ can originate only at a few locations, which can be determined by considering the forms of the polyhedra as in Lemmas~\ref{lemma:Pdecomp-ppm} and~\ref{lemma:Pdecomp-pmpm}, and the $A$--resolutions of the tangles. By considering locations where the faces originate, we can identify $\partial E$. The following lemma is also useful. 

\begin{lem}[Two--Face Argument]
  \label{lemma:twoface}
Let $E$ be a normal square such that $\partial E$ meets shaded faces $F_1$ and $F_2$ at one arc in a white face $W_1$, and meets shaded faces $F_1$ and $F_3$ in another arc in a white face $W_2$. Then $F_2=F_3$. Moreover, up to isotopy there is a unique embedded arc from $W_1$ to $W_2$ running through $F_2=F_3$, and $\partial E$ must follow this arc.
\end{lem}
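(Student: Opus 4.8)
The plan is to read off both conclusions directly from the structure of a normal square recorded in Lemma~\ref{lemma:pullnormalsquare}, together with the fact, already used in the proof of Lemma~\ref{lem:equivalentunderparaboliccompression}, that each shaded face of $P_A$ is simply connected.

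First I would recall that the boundary of a normal square is a cycle of four arcs alternating between shaded and white faces: two opposite sides $s_1, s_2$ run through shaded faces, while the two white sides lie in $W_1$ and $W_2$ and each cuts off a triangle whose edges are colored gold--white--green (Lemma~\ref{lemma:pullnormalsquare}(3)). In particular, each white side is incident to exactly the two shaded faces containing $s_1$ and $s_2$, and these are distinct by the checkerboard coloring of Lemma~\ref{lem:checkerboardidealpolyhedron}. Tracing around $\partial E$ in the cyclic order $s_1, w_1, s_2, w_2$, one checks that the white arc in $W_1$ and the white arc in $W_2$ are incident to the same unordered pair of shaded faces, namely those of $s_1$ and $s_2$. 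Since $F_1$ lies in both pairs by hypothesis, the remaining members must coincide, giving $F_2 = F_3$.

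For the second statement, set $F := F_2 = F_3$. Among the two shaded sides of the square, one lies in $F$ and runs from the white triangle at $W_1$ to the white triangle at $W_2$; this is the arc in question. Its endpoints are pinned to the two edges of $P_A$ along which $F$ abuts the single vertices of $W_1$ and $W_2$ that $\partial E$ cuts off. Because $F$ is simply connected, it is an embedded disk, and any two embedded arcs in a disk with endpoints on prescribed boundary edges are isotopic. Hence the arc of $\partial E$ through $F$ is determined up to isotopy, and $\partial E$ must follow it.

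The only point requiring care is the bookkeeping in the second paragraph: that each white side is incident to \emph{both} shaded faces and that the pair of shaded faces met at $W_1$ agrees with the pair met at $W_2$. Once this is set up, the first claim is immediate and the second reduces to the standard fact that embedded arcs in a disk are determined up to isotopy by their endpoints. I do not expect a genuine obstacle here; the lemma is essentially a clean repackaging of Lemma~\ref{lemma:pullnormalsquare} and the simple connectivity of shaded faces.
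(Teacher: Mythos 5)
Your proof is correct and takes essentially the same route as the paper's: the identity $F_2=F_3$ comes from the fact that a normal square runs through only two shaded faces, and the uniqueness of the connecting arc comes from the simple connectivity of shaded faces. The paper's proof is just a two-sentence version of your more detailed bookkeeping, so the extra tracing argument you supply is fine but not a genuinely different approach.
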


\begin{proof}
The fact that $F_2=F_3$ is obvious, since a normal square only runs through two shaded faces. The fact that there is a unique arc from $W_1$ to $W_2$ in $F_2$ follows from the fact that shaded faces are simply connected. 
\end{proof}


\section{Bounds on essential product disks}\label{sec:epds}
The main results of this section are Propositions~\ref{prop:++-prop} and~\ref{prop:+-+-prop}, which bound $||E_c||$ for $++-$ and $+-+-$ Montesinos links, respectively. We prove these by looking first for complex EPDs that are of type~(1), then for those that are not of type~(1), which must necessarily be type~(2). 

\subsection{The $++-$ case}

\begin{lem}Let $K$ be a $++-$ link with reduced, admissible diagram $D(K)$. Then there are only two type~(1) complex EPDs in $P_A$, one with white faces of type (1N) and (1S) as shown in Figure~\ref{fig:++-1N1S}(b), and the other with white faces of type (2N) and (1S) as shown in Figure~\ref{fig:++-2N1S}(c).
\label{lem:++-type1EPDs}
\end{lem}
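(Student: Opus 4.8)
The plan is to classify all type~(1) complex EPDs directly by exploiting the tight constraints from Lemma~\ref{lem:vertices} together with the Two--Face Argument (Lemma~\ref{lemma:twoface}). By Lemma~\ref{lem:2edgeloops}, a type~(1) EPD $E$ runs through a negative tangle with a single distinguished state circle $I$; in a $++-$ link the unique negative tangle is $T_c$, so $I$ is a state circle inside $T_c$. After coloring $I$ green and pulling $\partial E$ into a normal square as in Lemma~\ref{lemma:pullnormalsquare}, the two white sides of the square cut off triangles, and by Lemma~\ref{lem:vertices} (noting that type~(3) arcs cannot occur for $++-$ links) each white side must be one of (1N), (1S), (2N), (2S). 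So the entire classification reduces to enumerating the $\binom{4}{2}$ (plus matching) unordered pairs of permissible white arcs and deciding, for each pair, whether there is an actual embedded EPD realizing it.

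First I would fix the orientation conventions forced by Lemma~\ref{lemma:pullnormalsquare}(3): the gold-white-green ordering means one white arc sits at the tail of a gold tentacle and the other at the head of a green tentacle, so a priori $E$ has one ``green-type'' white side and one ``gold-type'' white side. Since $I$ is green, the green tentacles emanate from $I$; this immediately restricts the green white side to type (1N), (1S), (2N), or (2S) \emph{originating at $I$}, while the gold side must terminate at the \emph{same} second shaded face $F_2=F_3$ dictated by Lemma~\ref{lemma:twoface}. I would then go pair by pair. For each candidate pair of white arcs I would track, using the explicit $H_A$ structure of $T_c$ from Lemma~\ref{lemma:HANegAdmissibleTangle} and the polyhedron description in Lemma~\ref{lemma:Pdecomp-ppm} (in particular that there is exactly one segment at the north of $T_c$), which shaded face the gold arc can reach and whether a single embedded gold arc can connect the two white sides through one common face. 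Many combinations will be eliminated because the required second shaded face does not exist, or because the forced gold arc would have to pass through a non-bigon white face, contradicting that $E$ is a normal square cutting off exactly the triangles specified.

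The surviving cases should be exactly two: the pair \{(1N),(1S)\}, giving the EPD of Figure~\ref{fig:++-1N1S}(b), and the pair \{(2N),(1S)\}, giving that of Figure~\ref{fig:++-2N1S}(c). I would verify positively that each of these does bound an embedded disk by exhibiting the explicit green and gold arcs: in the (1N)/(1S) case both white sides sit on $I$ with the two arcs running north and south to $I$ and the gold face being the outer state circle of $T_c$ spanning $I$ top to bottom; in the (2N)/(1S) case the (2N) side sits at the head of a tentacle leaving the negative block into the adjacent positive tangle, and I would check that the unique gold arc closing it up is embedded. The single-segment-at-the-north condition of Lemma~\ref{lemma:Pdecomp-ppm}(4) is what makes (2N) admissible but its ``south'' analogue or the (2S)-paired variants fail, so I would lean on that asymmetry.

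The main obstacle I anticipate is the bookkeeping that rules out the pairs involving (2S), and pairs like \{(1N),(2N)\} or \{(2N),(2S)\}: for these I must show there is \emph{no} embedded EPD, not merely that the naive picture looks awkward. The cleanest way to do this is the simply-connectedness argument used in Lemma~\ref{lem:equivalentunderparaboliccompression} --- if the required gold arc together with an arc of $\partial E$ through $I$ would close up into a non-trivial loop in a shaded face, that contradicts each shaded face being simply connected, killing the case. The delicate point is being careful that these eliminations use only features guaranteed by Lemmas~\ref{lemma:Pdecomp-ppm}, \ref{lemma:HAPosAdmissibleTangle}, and~\ref{lemma:HANegAdmissibleTangle} (i.e.\ hold for \emph{every} reduced admissible $++-$ diagram, independent of the particular slopes), rather than accidental features of a sample figure. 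I expect the write-up to be a finite but somewhat lengthy case check, with each eliminated case a short paragraph appealing to the two-face argument and simple-connectedness, and the two surviving cases verified by direct construction against Figures~\ref{fig:++-1N1S}(b) and~\ref{fig:++-2N1S}(c).
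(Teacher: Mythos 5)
Your overall skeleton --- pull $E$ into a normal square, restrict the white sides to types (1N), (1S), (2N), (2S) via Lemma~\ref{lem:vertices}, and run a finite case check driven by the Two--Face Argument and the structure lemmas --- is indeed the paper's approach. But your elimination step contains a genuine gap. You propose to dispose of the combinations involving (2S) (and of pairs like \{(1N), (2N)\}) by showing that \emph{no embedded EPD} realizes them, via a simple-connectedness contradiction. That is not what happens: embedded EPDs realizing (1N)(2S) and (2N)(2S) genuinely exist --- the paper exhibits two of the former (Figure~\ref{fig:++-1N2S}(b),(c)) and five of the latter (Figure~\ref{fig:++-2N2S}(b)--(f)) --- so any attempt to prove their non-existence must fail. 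The reason these combinations contribute nothing to the lemma is different: every EPD they produce bounds only white bigon faces on one side of its boundary, hence is semi-simple by Lemma~\ref{lem:semisimple}, and in particular not complex. The same mechanism is needed \emph{inside} the combination (2N)(1S), where two EPDs arise and one of them (Figure~\ref{fig:++-2N1S}(b), encircling only bigons of $T_c$) is discarded for exactly this reason. Without invoking the bigon criterion for semi-simplicity, your case analysis cannot be completed; the classification is not of which white-edge pairs support embedded EPDs, but of which support EPDs that fail to be simple or semi-simple.

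Two smaller points. First, since a type~(1) EPD runs along segments joining $I$ to both the north and the south sides of $T_c$ (Lemma~\ref{lem:2edgeloops}(1)), one white side must be of north type and the other of south type, so only the four combinations (1N)(1S), (1N)(2S), (2N)(1S), (2N)(2S) occur; pairs such as \{(1N), (2N)\} never arise and need no argument. Second, in your description of the surviving (1N)(1S) case the second shaded face is not ``the outer state circle of $T_c$'': shaded faces originate at innermost disks, and the Two--Face Argument forces both faces meeting the white sides to originate in the innermost disk $G$ between the two positive tangles, which in turn forces $T_a$ to contain no state circles --- that is what makes Figure~\ref{fig:++-1N1S}(b) the unique picture in this case.
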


\begin{proof}
Let $E$ be a type~(1) complex EPD in $P_A$. Pull $E$ into a normal square with two white edges as in Lemma~\ref{lemma:pullnormalsquare}. Lemma~\ref{lem:vertices} tells us all the possible locations for white edges of $\partial E$. These possibilities are labeled (1N), (1S), (2N), and (2S). We must have one white edge north and one south. Therefore we must check only the following four combinations.

\underline{White edges (1N), (1S).}
By the notation (1N), (1S), we mean that our two white edges are described by type (1N) and type (1S) in Lemma~\ref{lem:vertices}. In this case, our EPD must have a portion as in Figure~\ref{fig:++-1N1S}(a). By the Two--Face Argument (Lemma~\ref{lemma:twoface}), the faces to the north and south, colored gold and magenta, must agree. Therefore, they must originate in the same innermost disk. We now consider where these faces might originate.

The face from the south, labeled magenta, must run around the bottom of the diagram on the inside, and therefore originate in the positive tangle $T_a$, or in the state circle $G$ between $T_a$ and $T_b$ if $T_a$ contains no state circles. The face from the north, labeled gold, runs from $T_b$, and therefore originates in $G$. Thus both originate in $G$ and $T_a$ contains no state circles. This is shown in Figure~\ref{fig:++-1N1S}(b). Here $\partial E$ may bound white faces that are not bigons on both sides, where indicated by an asterisk in the figure. Thus this may be a complex EPD.

\begin{figure}
\centering
\begin{tabular}{cc}
	\includegraphics{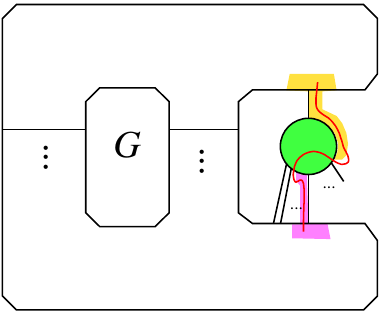} &
  \includegraphics{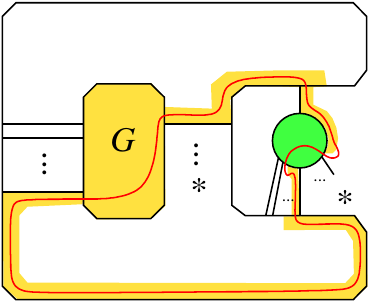} \\
  (a) & (b) \\
\end{tabular}
\caption{An EPD with white edges type (1N) and (1S)}
\label{fig:++-1N1S}
\end{figure}

\underline{White edges (1N), (2S).}
The boundary of an EPD with white edges type (1N) and (2S) will be as in Figure~\ref{fig:++-1N2S}(a), with the EPD following a green tentacle wrapped around the south of the diagram on the outside. A white edge of type (2S) requires that $T_c$ has only one state circle, so that there are no non-bigons in $T_c$.

Now, the gold tentacle meeting $I$ from the north originates in $G$. The gold tentacle meeting the green on the far west of the diagram will originate in $T_a$ or in $G$; since the gold only has one innermost disk from which it originates, this must be $G$. From $G$, the gold face reaches the type $(1N)$ white edge by running across $T_b$; but it may reach the $(2S)$ white edge by running across either $T_a$ or $T_b$.  Therefore there are two possible EPDs, shown in Figures~\ref{fig:++-1N2S}(b) and~\ref{fig:++-1N2S}(c). In each of these figures the only places where non-bigons may occur in the diagram are indicated by an asterisk. Note each of the EPDs bounds only bigons on one side, so they are both semi-simple. 

\begin{figure}
\centering
\begin{tabular}{ccc}
	\includegraphics{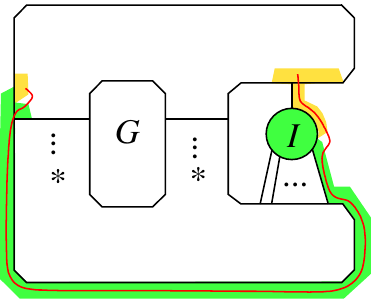} &
	\includegraphics{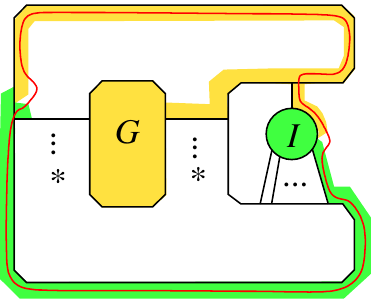} &
	\includegraphics{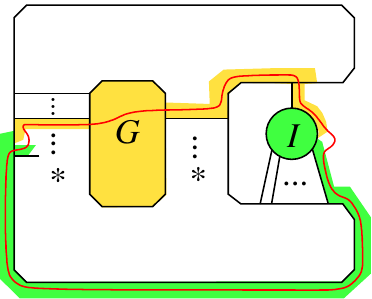} \\
  (a) & (b) & (c)
\end{tabular}
\caption{Semi-simple EPDs with white edges of type (1N) and (2S)}
\label{fig:++-1N2S}
\end{figure}

\underline{White edges (2N), (1S).} 
A normal square with white edges (2N) and (1S) will have a diagram as in Figure~\ref{fig:++-2N1S}(a). The gold face to the south wraps around the bottom of the diagram and originates in $T_a$ or in $G$. The magenta face to the west, inside $T_b$, must either originate in $T_b$, or in $G$, or it is the same tentacle as the gold to the south.

Suppose first that the magenta face to the west is the same tentacle as the gold to the south of $T_c$. Then as shown in Figure~\ref{fig:++-2N1S}(b), the EPD follows this tentacle from gold to the south to the tip of the face to the west, and the EPD is semisimple, encircling only bigons of $T_c$.

So suppose these are not the same tentacle. Since they lie in the same face, by Lemma~\ref{lemma:twoface}, both must originate in $G$, as in Figure~\ref{fig:++-2N1S}(c). Thus $T_a$ has only bigon faces, but $T_b$ and $T_c$ might have non-bigon faces. The location of non-bigon faces is indicated by asterisks. Note that this may give a complex EPD.

\begin{figure}
\centering
\begin{tabular}{ccc}
  \includegraphics{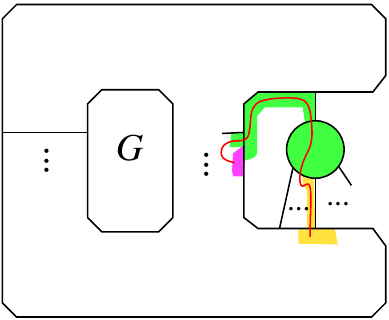} &
 	\includegraphics{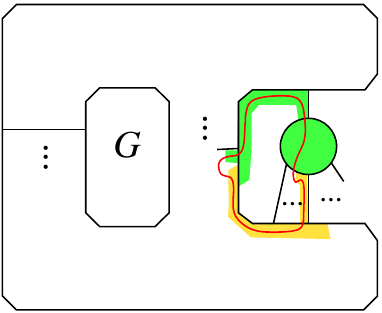} &
	\includegraphics{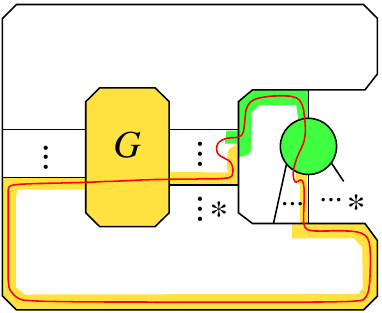} \\
  (a) & (b) & (c)
\end{tabular}
\caption{EPDs with white edges of type (2N) and (1S)}
\label{fig:++-2N1S}
\end{figure}

\underline{White edges (2N), (2S).}
The (2N) and (2S) white edges give a diagram as in Figure~\ref{fig:++-2N2S}(a). Note the gold and magenta faces must actually be the same face. We determine where gold and magenta tentacles may originate. This case is somewhat more complicated than the previous three. The magenta may originate in $T_b$ or in $G$, or the magenta tentacle may agree with the tentacle running around the south of the diagram, and therefore originate at the furthest southwest state circle of $T_a$, or in $G$ if $T_a$ has no state circles. The gold may originate in $G$ and run east to wrap around the north of the diagram, may originate in $G$ and run west across a single segment in $T_a$, or may originate in $T_a$. Putting this information together, noting that gold and magenta originate in the same place (Lemma~\ref{lemma:twoface}), we find that the face either originates in $G$, or in the far southwest state circle in $T_a$.

If the face originates in $G$ there are four different possible diagrams, depending on whether gold or magenta run east or west, shown in Figures~\ref{fig:++-2N2S}(b) through (e).  If the face originates in $T_a$ the diagram is as in Figure~\ref{fig:++-2N2S}(f). Note that all of the EPDs pictured bound only bigons on one side, and therefore are semi-simple.

\begin{figure}
\begin{center}
\begin{tabular}{ccc}
\includegraphics{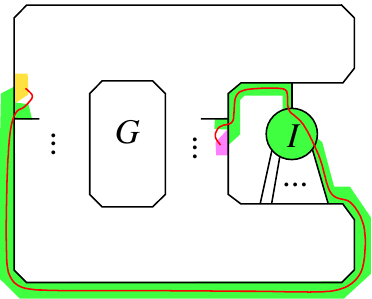} &
\includegraphics{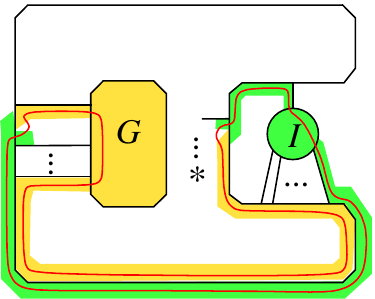} &
\includegraphics{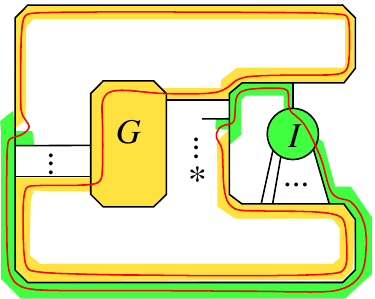} \\
(a) & (b) & (c) \\
\includegraphics{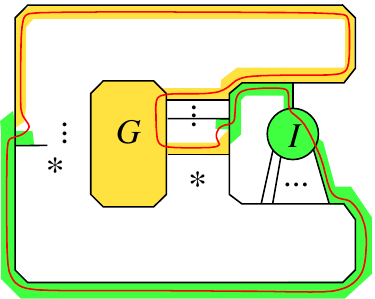} &
\includegraphics{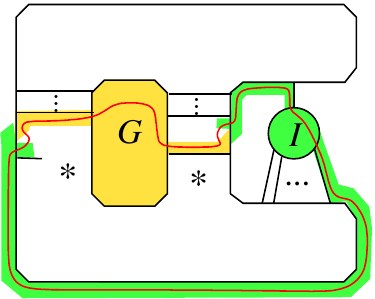} &
\includegraphics{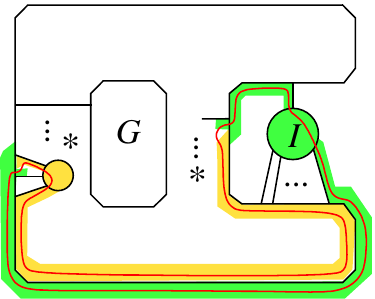}
\\
(d) & (e) & (f)
\end{tabular}
\end{center}
\caption{EPDs with white edges of type (2N), (2S)}
\label{fig:++-2N2S}
\end{figure}

This concludes the search for type (1) complex EPDs in $P_A$. The complex EPDs are found are depicted in Figures~\ref{fig:++-1N1S}(b) and~\ref{fig:++-2N1S}(c). 
\end{proof}

\begin{lem} Let $K$ be a $++-$ link with reduced, admissible diagram $D(K)$. Then the only possible complex EPDs in $P_A$ which are not of type~(1) are those shown in Figure~\ref{fig:aSbN++-Comb}(c) and~(d).
\label{lem:++-type2EPDs}
\end{lem}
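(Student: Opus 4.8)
The plan is to mirror the strategy used in the proof of Lemma~\ref{lem:++-type1EPDs}, now applied to disks that are not of type~(1). By Lemma~\ref{lem:2edgeloops}, every complex EPD in a Montesinos link is either of type~(1) or of type~(2); hence any complex EPD $E$ that is not of type~(1) must be of type~(2). In a $++-$ link the only two positive tangles are $T_a$ and $T_b$, so by the definition of a type~(2) EPD the boundary $\partial E$ runs along segments of $H_A$ spanning both $T_a$ and $T_b$ from east to west. These are precisely the horizontal segments described in Lemma~\ref{lemma:Pdecomp-ppm}(2), whose endpoints lie on the innermost disk $G$ between the two positive tangles. I would first pull $E$ into a normal square using Lemma~\ref{lemma:pullnormalsquare}, so that $\partial E$ consists of two arcs through shaded faces (colored green and gold) together with two white edges, each cutting off a single ideal vertex.

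Since $\partial E$ is forced to run through both positive tangles, its two white edges must occur at the ends of spanning segments inside $T_a$ and $T_b$, lying on either the north or the south side of each tangle. As in the type~(1) analysis, this leaves finitely many combinations to check, organized by whether the white edge in $T_a$ is north or south and likewise for $T_b$. For each combination I would use the combinatorial description of $P_A$ in Lemma~\ref{lemma:Pdecomp-ppm}, together with the $A$--resolutions of the tangles, to locate where the green and gold shaded faces may originate, and then invoke the Two--Face Argument (Lemma~\ref{lemma:twoface}) to force the two faces met on opposite white edges to coincide and to originate in a common innermost disk. In each admissible case this pins $\partial E$ down to a unique curve up to isotopy, which can then be drawn explicitly into the diagram of $P_A$.

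Finally, for each surviving configuration I would apply Lemma~\ref{lem:semisimple}: if $\partial E$ bounds a region containing only white bigon faces on one side, then $E$ is semi-simple and hence not complex, so it can be discarded. The configurations in which non-bigon white faces occur on both sides of $\partial E$ (the non-bigon lying just below the top band in $T_a$ or $T_b$, as in Lemma~\ref{lemma:Pdecomp-ppm}(3)) are exactly those that remain, and these are the two disks shown in Figure~\ref{fig:aSbN++-Comb}(c) and~(d). I expect the main obstacle to be the bookkeeping in this last step: the same curve may wrap around the negative tangle $T_c$ on its outside, and whether a given configuration is genuinely complex rather than merely semi-simple turns on correctly identifying a single non-bigon face on each side, so the case analysis must be carried out with care to avoid missing or double-counting a complex disk.
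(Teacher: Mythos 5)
There is a genuine gap in your setup, and it occurs exactly where the real work of the proof lies. You assert that since $\partial E$ runs along segments spanning $T_a$ and $T_b$, ``its two white edges must occur at the ends of spanning segments inside $T_a$ and $T_b$, lying on either the north or the south side of each tangle,'' and you then organize the case analysis as (north/south in $T_a$) $\times$ (north/south in $T_b$). This premise is false. The white sides of the normal square occur at tails of gold tentacles and heads of green tentacles (Lemma~\ref{lemma:pullnormalsquare}), and tentacles originating at $G$ can run far downstream: out of a positive tangle, all the way around the north or south of the diagram, and into another tangle --- including the negative tangle $T_c$. The paper's proof first observes that, coloring $G$ gold, no white side can be adjacent to $G$ (the only tentacle heads there are gold ones emanating from $G$ itself), which forces $\partial E$ to run along the \emph{south} of the western segment $\alpha$ and the \emph{north} of the eastern segment $\beta$; the white sides then lie wherever those two gold tentacles terminate, giving three possibilities for each ($T_a$, $T_b$, or $T_c$), hence a $3\times 3$ case analysis rather than your $2\times 2$ one.

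This is not a bookkeeping quibble: your parametrization misses precisely the disks the lemma is about. In the paper's notation, the two complex EPDs of Figure~\ref{fig:aSbN++-Comb}(c) and~(d) arise from the combinations ($\alpha$2)($\beta$1) and ($\alpha$2)($\beta$3), whose white sides lie, respectively, both in $T_b$, and in $T_b$ and $T_c$. Neither has a white side in $T_a$ at all. Carried out literally, your scheme would only examine configurations with one white side in each positive tangle --- which correspond to ($\alpha$1)($\beta$1) and ($\alpha$2)($\beta$2), and those are exactly the combinations the paper rules out because they force $E$ to be type~(1) --- so you would wrongly conclude that no complex type~(2) EPDs exist. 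To repair the argument you need the ``no white side adjacent to $G$'' observation, and then a case analysis over where the two gold tentacles leaving $G$ terminate, allowing termination in $T_c$ and allowing both white sides to land in the same tangle.
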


\begin{proof}
Let $E$ be a complex EPD in $P_A$ that is not type~(1); then $E$ must be type~(2). Therefore, $\partial E$ runs along segments across both $T_a$ and $T_b$. Each such segment has an endpoint on the innermost disk $G$. Since we may choose how to label the colors an EPD runs through, we choose to color the face bounded by $G$ gold. 
By Lemma~\ref{lemma:pullnormalsquare}, we may pull $E$  into a normal square with white sides at tails of the gold tentacles, and heads of the green.

For convenience, call the segment on the west $\alpha$, and the one on the east $\beta$. A priori, $\partial E$ may run along either the north or south sides of these segments. However, in fact by labeling $G$ gold, we may restrict to the case that $\partial E$ runs along the south of $\alpha$ and the north of $\beta$. This is because white sides occur at tails of gold and heads of green. There are no heads of any tentacles adjacent $G$, except those which run from innermost disk $G$ itself, which are gold, so there can be no white sides adjacent $G$. Thus $\partial E$ must run away from $G$ through gold tentacles, which run south of $\alpha$ and north of $\beta$.

Tentacles running along the south side of $\alpha$ and the north side of $\beta$ both originate in $G$. To find $\partial E$, we must look for a white side downstream from both $\alpha$ and $\beta$, and these must occur at a tail of a gold tentacle.

We have three possibilities for the location of the white side downstream from $\alpha$:
\begin{enumerate}
\item[($\alpha$1)] The tentacle on the south side of $\alpha$ terminates in $T_a$. In this case the white side lies in $T_a$, and meets the head of a green tentacle coming from a tentacle originating in the far southeast of $T_c$. See Figure~\ref{fig:++-aSbNalpha}($\alpha$1).
\item[($\alpha$2)] The tentacle runs out of $T_a$, across the entire south of the diagram, and $\partial E$ follows it to where it terminates in $T_b$. The white side lies in $T_b$, and meets a green tentacle originating in the far northwest of $T_c$. See Figure~\ref{fig:++-aSbNalpha}($\alpha$2).
\item[($\alpha$3)] The tentacle runs across the south of the diagram, meeting the head of a new gold tentacle in $T_c$, and $\partial E$ runs into $T_c$. Such a tentacle must terminate on a state circle in $T_c$, and so the white side is in $T_c$, meeting a green tentacle in $T_c$ originating at the state circle on which the gold tentacle terminates. See Figure~\ref{fig:++-aSbNalpha}($\alpha$3). 
\end{enumerate}

\begin{figure}
  \begin{center}
  \begin{tabular}{ccc}
    \includegraphics{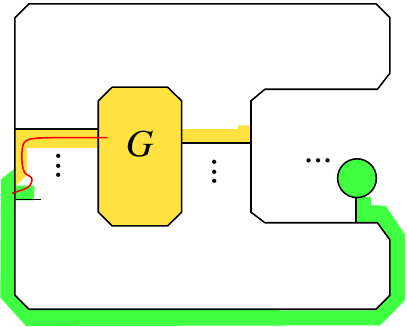} &
    \includegraphics{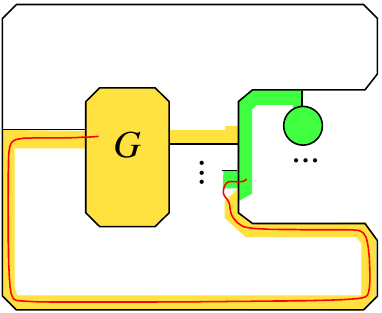} &
    \includegraphics{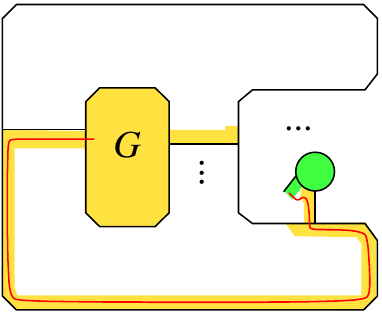} \\
    ($\alpha$1) & ($\alpha$2) & ($\alpha$3)
  \end{tabular}
  \end{center}
  \caption{$\partial E$ runs through a gold face across the south of $\alpha$; shown are possible locations of white sides.}
  \label{fig:++-aSbNalpha}
\end{figure}

Similarly, there are three possibilities for the white side downstream from $\beta$:
\begin{enumerate}
\item[($\beta$1)] The tentacle across the north of $\beta$ terminates in $T_b$, and the white side lies in $T_b$. It meets a green tentacle originating at the state circle in the far northwest of $T_c$. See Figure~\ref{fig:++-aSbNbeta}($\beta$1).
\item[($\beta$2)] The tentacle across the north of $\beta$ runs all the way around the north of the diagram, terminating in $T_a$, and $\partial E$ follows this tentacle to $T_a$. The white side lies in $T_a$, and meets a green face with head on the tentacle running across the bottom of the diagram, originating in the far southeast state circle of $T_c$. See Figure~\ref{fig:++-aSbNbeta}($\beta$2).
\item[($\beta$3)] The tentacle across the north of $\beta$ runs around the north of the diagram, but $\partial E$ follows a new tentacle into $T_c$. This tentacle terminates on a state circle in $T_c$, and the white side must be in $T_c$, connecting to a green face that originates in this state circle. See Figure~\ref{fig:++-aSbNbeta}($\beta$3).
\end{enumerate}

\begin{figure}
  \begin{center}
    \begin{tabular}{ccc}
    \includegraphics{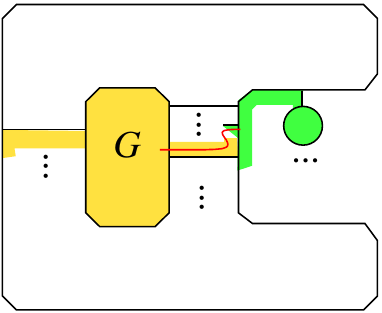} &
    \includegraphics{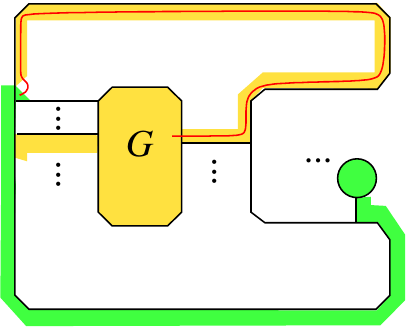} &
    \includegraphics{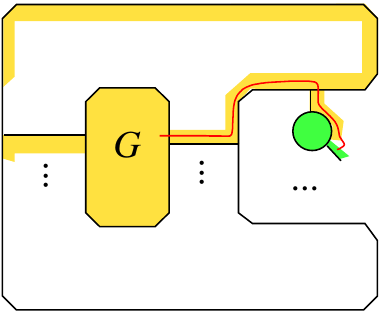}\\
    ($\beta$1) & ($\beta$2) & ($\beta$3)
    \end{tabular}
  \end{center}
  \caption{$\partial E$ runs through a gold face across the north of $\beta$; shown are possible locations of white edges and origins of the green tentacles they meet}
  \label{fig:++-aSbNbeta}
\end{figure}

The above give nine possible combinations, which we reduce. Note that in order for ($\alpha$1) to combine with ($\beta$1) or ($\beta$3), there can only be one state circle in $T_c$ and $\partial E$ must run over it. This means $E$ is a type~(1) EPD, contradicting assumption. An identical argument rules out ($\alpha$2) combined with ($\beta$2), and ($\alpha$3) combined with either ($\beta$1) or ($\beta$3).

We work through the remaining combinations. First, ($\alpha$1) and ($\beta$2), shown in Figure~\ref{fig:aSbN++-Comb}(a), gives rise to a semi-simple EPD. Similarly, ($\alpha$3) and ($\beta$2), shown in Figure~\ref{fig:aSbN++-Comb}(b), gives a semi-simple EPD. 
The combination ($\alpha$2) combined with ($\beta$1) may give a complex EPD, as does ($\alpha$2) combined with ($\beta$3). These are shown in Figure~\ref{fig:aSbN++-Comb}(c) and (d).
\end{proof}

\begin{figure}
  \begin{center}
  \begin{tabular}{cccc}
    \includegraphics[width=1.2in]{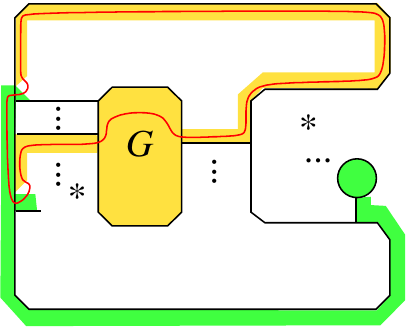} &
    \includegraphics[width=1.2in]{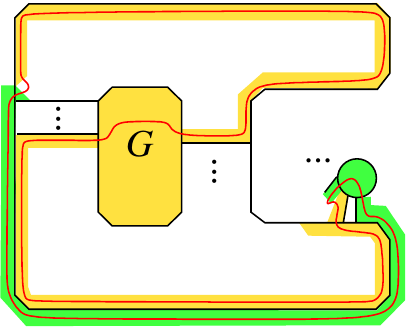} &
    \includegraphics[width=1.2in]{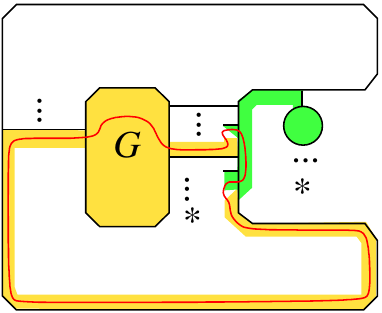} &
    \includegraphics[width=1.2in]{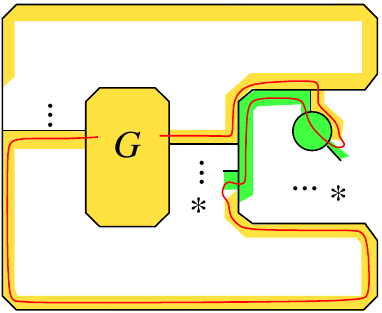} \\
    (a) ($\alpha$1) ($\beta$2) & (b) ($\alpha$3) ($\beta$2) &
    (c) ($\alpha$2) ($\beta$1) &
    (d) ($\alpha$2) ($\beta$3) \\
  \end{tabular}
  \end{center}
  \caption{Possibilities for $\partial E$ running over the south of $\alpha$ and the north of $\beta$}
  \label{fig:aSbN++-Comb}
\end{figure}

\begin{prop}
Let $K$ be a $++-$ link with reduced, admissible diagram $D(K)$. Then
\(
 ||E_c|| \leq 1,
\)
where $||E_c||$ is the number of complex EPDs required to span $P_A$.
\label{prop:++-prop}
\end{prop}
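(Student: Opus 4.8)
The plan is to reduce to the finite list of complex EPDs already produced and then collapse that list using parabolic compression. By Lemma~\ref{lem:2edgeloops} every complex EPD in $P_A$ is of type~(1) or type~(2). Lemma~\ref{lem:++-type1EPDs} shows the only type~(1) complex EPDs are the two of Figures~\ref{fig:++-1N1S}(b) and~\ref{fig:++-2N1S}(c), and Lemma~\ref{lem:++-type2EPDs} shows the only complex EPDs not of type~(1) are the two of Figure~\ref{fig:aSbN++-Comb}(c) and~(d). Thus there are at most four complex EPDs to consider. Recalling the characterization of the minimal spanning set $E_c$ (Lemma~\ref{lem:definitionE_c}), it suffices to prove that any two of these disks are equivalent under parabolic compression: a minimal set cannot then contain two distinct complex disks, forcing $||E_c|| \leq 1$.

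The key tool is Lemma~\ref{lem:equivalentunderparaboliccompression}, which declares two EPDs equivalent (modulo simple disks in $E_s$) as soon as their boundaries differ only by white bigon faces. So I would compare the four candidate boundaries pairwise and verify that each symmetric-difference region in the projection plane contains no non-bigon white face. The essential bookkeeping is to track the asterisked regions in the figures, since by Lemma~\ref{lemma:Pdecomp-ppm} these are the only places a non-bigon white face can occur: just below the northern band of $T_a$ or of $T_b$ when that tangle has an interior state circle, and inside $T_c$ when $T_c$ has more than one state circle. The claim to establish is that, whenever two of the candidate disks are simultaneously complex, they cut off the same collection of non-bigon white faces, so that passing from one boundary to the other only inserts or deletes strings of bigons.

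I would organize this comparison as a case analysis on which of $T_a$, $T_b$, $T_c$ actually contains an interior state circle (equivalently, hosts a non-bigon white face). In each case Lemma~\ref{lemma:Pdecomp-ppm} pins down exactly where the non-bigon faces lie, and one checks directly that every candidate disk that is complex in that case encloses precisely those faces, the remaining discrepancy being bigons. Combined with Lemma~\ref{lem:equivalentunderparaboliccompression}, this shows that every complex EPD is equivalent under parabolic compression to a single fixed one together with simple disks; minimality of $E_c$ then rules out a second complex disk, giving $||E_c|| \leq 1$.

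The hardest step will be the cross-type comparisons, matching a type~(1) disk --- which hugs the inner disk $G$ between the two positive tangles --- against a type~(2) disk, which instead runs along the full east--west segments across both $T_a$ and $T_b$. These boundaries look geometrically very different, so the real content is to show that when both are genuinely complex their symmetric difference is nonetheless bigons-only. I expect to handle this by exploiting the structural hypotheses that force each disk to be complex in the first place (for instance, the requirement that $T_a$ carry only bigon faces for the type~(1) disks of Figures~\ref{fig:++-1N1S}(b) and~\ref{fig:++-2N1S}(c)), using them to constrain the diagram enough that at most one complex equivalence class can survive in any single $++-$ diagram.
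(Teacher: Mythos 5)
Your proposal follows essentially the same route as the paper's proof: invoke Lemmas~\ref{lem:++-type1EPDs} and~\ref{lem:++-type2EPDs} to reduce to the four candidate complex EPDs, then show pairwise that their boundaries differ only by white bigons so that Lemma~\ref{lem:equivalentunderparaboliccompression} makes them equivalent under parabolic compression, whence minimality of $E_c$ gives $||E_c||\leq 1$. The paper carries out the pairwise check by directly tracing the four boundaries through $T_a$, $T_b$, $T_c$ (including the cross-type pairs you flag as hardest, which turn out to be routine), so your plan is correct and matches the paper in both structure and key tools.
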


\begin{proof}
To obtain the desired result, we need to show that if $E$ and $E'$ are two complex EPDs in $P_A$, then $E'$ is equivalent under parabolic compression to a subset of $E\cup E_s$. 

In Lemmas~\ref{lem:++-type1EPDs} and~\ref{lem:++-type2EPDs}, we found all possible complex EPDs in $P_A$. They are shown in Figures~\ref{fig:++-1N1S}(b), \ref{fig:++-2N1S}(c), \ref{fig:aSbN++-Comb}(c), and \ref{fig:aSbN++-Comb}(d), which we reproduce for convenience in Figure~\ref{fig:++-complex}. Denote the four complex EPDs by $E_1$, $E_2$, $E_3$, and $E_4$, as illustrated. We compare these pairwise.

\begin{figure}
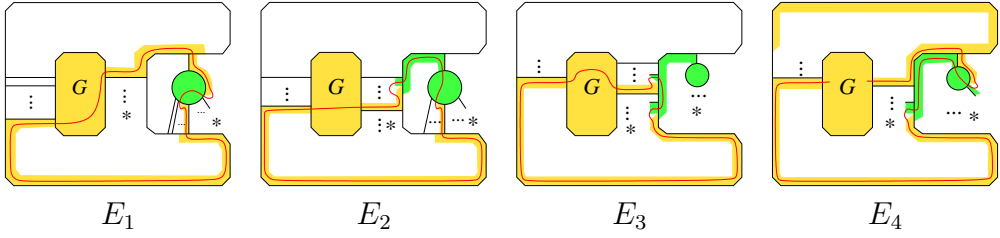

  \centering
  \begin{tabular}{cccc}
    \includegraphics[width=1.2in]{figures/Type1N1S++-EPD.pdf} &
    \includegraphics[width=1.2in]{figures/Type2N1S++-EPD2.pdf} &
    \includegraphics[width=1.2in]{figures/aSbN++-a2b1.pdf} &
    \includegraphics[width=1.2in]{figures/aSbN++-a2b3.pdf} \\
    $E_1$ & $E_2$ & $E_3$ & $E_4$
  \end{tabular}
  \caption{All complex EPDs in $P_A$ for a $++-$ link}
  \label{fig:++-complex}
\end{figure} 

First consider $E_1$ and $E_2$. Note both EPDs run over the southernmost segment of $T_a$. In $T_b$, $E_1$ runs over the northernmost segment, and $E_2$ runs over a segment besides the northernmost one that still spans $T_b$. Note they both run from the segment spanning $T_b$ to the north of $T_c$, and so there are only bigons in $T_b$ between them. In $T_c$, then run from north to south, and either one may run down any segments connecting the state circle $I$ to the state circle on the south side of $T_c$. From there, both run along the inside south face, back to the southernmost segment of $T_a$. Thus there are only bigons in $T_c$ and in $T_b$ between the two EPDs. Therefore by Lemma~\ref{lem:equivalentunderparaboliccompression}, $E_1$ and $E_2$ are equivalent under parabolic compression.

The analysis of other pairs is similar. For $E_1$ and $E_3$, the two EPDs differ by only bigons in $T_b$, and bigons in $T_c$, including the bigon on the far west side of $T_c$. Similarly for $E_1$ and $E_4$. The pairs $E_2$ and $E_3$ differ only by bigons in $T_b$ and in $T_c$, as do the pairs $E_2$ and $E_4$. Finally, $E_3$ and $E_4$ differ only by bigons in $T_b$. So in all cases, Lemma~\ref{lem:equivalentunderparaboliccompression} implies that the EPDs are equivalent under parabolic compression. 
\end{proof}

\subsection{The $+-+-$ case}

\begin{lem}
Let $K$ be a $+-+-$ link with reduced, admissible diagram $D(K)$. Then a type~(1) complex EPD in $P_A$ is equivalent to one with white edges of type (1N) and (1S), shown in Figure~\ref{fig:+-+-1N1S}(b), or of type (2N) and (1S), shown in Figure~\ref{fig:+-+-2N1S}(c), or of type (3N) and (2S), shown in Figure~\ref{fig:+-+-3N2S}(b).
\label{lem:+-+-type1EPDs}
\end{lem}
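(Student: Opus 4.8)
The plan is to mirror the proof of Lemma~\ref{lem:++-type1EPDs}, now accounting for the second negative tangle and the newly available type~(3) white edges. Let $E$ be a type~(1) complex EPD, so by Lemma~\ref{lem:2edgeloops}(1) there is a negative tangle whose innermost disk $I$ carries $\partial E$. Using the cyclic symmetry of the $+-+-$ arrangement (a rotation by two tangles carries $+-+-$ to itself and interchanges the two negative tangles), I may assume $I$ lies in $T_b$, which by Lemma~\ref{lemma:Pdecomp-pmpm}(1) is contained in the state circle $G$ between the positive tangles $T_a$ and $T_c$. Color $I$ green and pull $E$ into a normal square via Lemma~\ref{lemma:pullnormalsquare}, so that its two white edges occur at a tail of a gold tentacle and a head of a green tentacle, one to the north and one to the south.

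By Lemma~\ref{lem:vertices}, each white edge is of type (1N), (1S), (2N), (2S), (3N), or (3S); unlike the $++-$ case, type~(3) edges are now permitted, since the string $-+-$ of tangles ($T_b$, then $T_a$ or $T_c$, then $T_d$) occurs in the diagram. Requiring one northern and one southern edge leaves nine combinations to examine. In each, the green face met by $\partial E$ is the innermost disk $I$, common to both white edges, while the other shaded face appears at both; by the Two--Face Argument (Lemma~\ref{lemma:twoface}) these two appearances are the same shaded face, hence originate at a single innermost disk. Tracing where this face can originate --- in $G$, in a positive tangle $T_a$ or $T_c$, or, for a type~(3) edge, on the outer state circle of $T_d$ via Lemma~\ref{lemma:HANegAdmissibleTangle} --- pins $\partial E$ down up to white bigons, exactly as in the four cases of Lemma~\ref{lem:++-type1EPDs}.

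I would then sort the nine combinations into three outcomes. Some combinations are incompatible with the Two--Face Argument, because no single innermost disk can be reached downstream from both prescribed white edges; these are discarded, as the $(\alpha)$--$(\beta)$ incompatibilities were discarded in Lemma~\ref{lem:++-type2EPDs}. Several more force $\partial E$ to bound only bigon white faces on one side, whence they are semi-simple by Lemma~\ref{lem:semisimple} and are not complex. The combinations (1N)(1S), (2N)(1S), and (3N)(2S) survive as genuinely complex, giving the three forms in Figures~\ref{fig:+-+-1N1S}(b), \ref{fig:+-+-2N1S}(c), and~\ref{fig:+-+-3N2S}(b); any remaining complex possibility differs from one of these only by white bigons, hence is equivalent to it under parabolic compression by Lemma~\ref{lem:equivalentunderparaboliccompression}.

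The main obstacle is the analysis of the type~(3) combinations, which have no analogue in the $++-$ case: here a shaded face originates on the outer state circle of the second negative tangle $T_d$, reached by running from $I$ across an adjacent positive tangle from east to west (or west to east) and then downstream into $T_d$. I would need to track this tentacle carefully through the structure of $T_d$ supplied by Lemma~\ref{lemma:HANegAdmissibleTangle}, and check, in each type~(3) pairing, whether the region $\partial E$ cuts off contains a non-bigon white face --- making the disk complex, as for (3N)(2S) --- or only bigons, making it semi-simple. Verifying that the pairings (3N)(3S), (2N)(3S), and their reflected partners reduce, via Lemma~\ref{lem:equivalentunderparaboliccompression}, to the three listed forms rather than producing a genuinely new complex disk is where the care is required.
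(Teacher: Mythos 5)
Your framework is exactly the paper's: pull the type~(1) complex EPD into a normal square (Lemma~\ref{lemma:pullnormalsquare}), locate its two white edges via Lemma~\ref{lem:vertices}, note that type~(3) edges are now possible, reduce to nine north/south combinations, and use the Two--Face Argument (Lemma~\ref{lemma:twoface}) to constrain where the second shaded face originates. Your reduction by cyclic permutation to a fixed negative tangle is also the paper's first move (it normalizes to $T_d$ where you choose $T_b$; either is fine, since a cyclic shift of the tangles is an isotopy of the projection sphere).

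The genuine gap is that the nine-case analysis is announced but never performed, and that analysis \emph{is} the content of the lemma. You assert that ``(1N)(1S), (2N)(1S), and (3N)(2S) survive as genuinely complex'' and that the remaining combinations are discarded or semi-simple, but this is precisely what has to be proved, and it cannot be read off from the $++-$ case. For example, in the combination (2N)(2S) the Two--Face Argument still leaves five distinct originations of the second shaded face (in $T_a$, or in $T_b$ with tentacles running across $T_a$, across $T_c$, or around the inside north or south of the diagram), and each must be traced, using Lemmas~\ref{lemma:Pdecomp-pmpm}, \ref{lemma:HAPosAdmissibleTangle} and~\ref{lemma:HANegAdmissibleTangle}, and seen to bound only bigons on one side; in (1N)(2S) one origination is impossible while the two valid ones give a simple and a semi-simple disk; in (3N)(2S) one of the two valid originations is semi-simple and only the other can be complex. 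None of this tracing appears in your write-up --- you explicitly defer it (``I would then sort\ldots'', ``is where the care is required''). Moreover, your proposed sorting principle mispredicts how the cases resolve: in the type~(1) analysis no whole combination is ``incompatible with the Two--Face Argument''; every one of the nine combinations produces at least one realizable EPD, and what the Two--Face Argument eliminates are particular originations \emph{within} a combination, forcing diagram constraints (e.g.\ $T_a$ having no state circles, or $T_b$ having exactly one) rather than discarding the combination outright. Discarding whole combinations by contradiction is how the type~(2) analysis of Lemma~\ref{lem:+-+-type2EPDs} goes, not this one. As written, the conclusion of the lemma is therefore assumed rather than established; completing the proof requires carrying out the case-by-case tentacle tracing.
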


\begin{proof}
For $E$ a type~(1) complex EPD in $P_A$, we pull $E$ into a normal square with two white edges (Lemma~\ref{lemma:pullnormalsquare}). Since $E$ is type~(1), $\partial E$ runs through a negative tangle from north to south. We may assume, after possibly taking a cyclic permutation, that $\partial E$ runs through $T_d$. Lemma~\ref{lem:vertices} tells us all the possible locations for white edges of $\partial E$. These possibilities are labeled (1N), (1S), (2N), (2S), (3N), and (3S). Since $\partial E$ runs over the segment to the north and a segment to the south of the state circle $I$ in $T_d$, there are nine combinations to check.

\underline{White edges of type (1N), (1S).}
By the notation (1N), (1S), we mean that our two white edges are described by type~(1N) and type~(1S) in Lemma~\ref{lem:vertices}. Our EPD must have white edges as in Figure~\ref{fig:+-+-1N1S}(a). By Lemma~\ref{lemma:twoface}, the gold and magenta faces shown must agree. The magenta tentacle has its head on the tentacle that wraps all the way around the bottom of the diagram. This tentacle either originates in the far southwest state circle of $T_a$, or if $T_a$ has no state circles, from the far northwest state circle of $T_b$. The gold tentacle has its head on the tentacle that wraps around the inside of the top of the diagram; it must run over a segment connecting east to west of $T_c$, and originates in the far southwest state circle of $T_b$. For these originating state circles to be the same, there must be only one state circle in $T_b$, and gold and magenta must originate there. Thus there are no state circles in $T_a$. This results in the diagram pictured in Figure~\ref{fig:+-+-1N1S}(b). This diagram may contain non-bigon white faces where indicated by an asterisk, so this EPD may be complex.

\begin{figure}
  \begin{center}
    \begin{tabular}{cc}
      \includegraphics{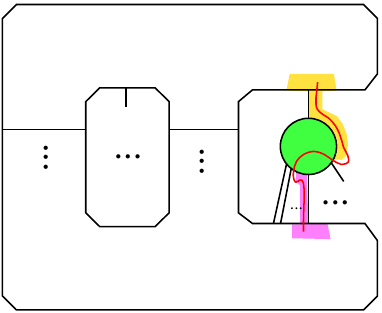} &
      \includegraphics{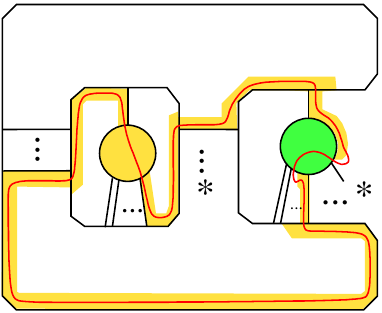} \\
      (a) & (b) \\
    \end{tabular}
  \end{center}
  \caption{For $+-+-$ link, edges of type (1N) and (1S); possible complex EPD shown in (b)}
  \label{fig:+-+-1N1S}
\end{figure}

\underline{White edges (1N), (2S).}
In this case, $\partial E$ must run over faces of $P_A$ that look like Figure~\ref{fig:+-+-1N2S}(a). Since a white edge is of type~(2S), we only have one state circle in $T_d$. The gold tentacle in this figure has its head on the tentacle running across the inside--top of the diagram; this originates in the furthest southeast state circle of $T_b$. By Lemma~\ref{lemma:twoface}, the magenta tentacle must originate from the same state circle. However, the magenta tentacle either originates in $T_a$, or runs through $T_a$ and originates in the northwest of $T_b$, or runs across the inside--top of the diagram and originates in the southeast of $T_b$. Originating in $T_a$ is impossible, but the other two options are both valid, provided in the case the face originates in the northwest of $T_b$ there is only one state circle in $T_b$. These two valid options result in Figures~\ref{fig:+-+-1N2S}(b) and~(c), respectively. Note the EPD in Figure~\ref{fig:+-+-1N2S}(c) bounds a single bigon on the outside, and so is simple. In Figure~\ref{fig:+-+-1N2S}(b), non-bigons can only occur where indicated by an asterisk. This EPD is semi-simple.

\begin{figure}
  \begin{center}
    \begin{tabular}{ccc}
      \includegraphics{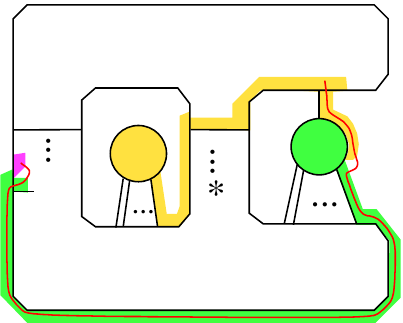} &
      \includegraphics{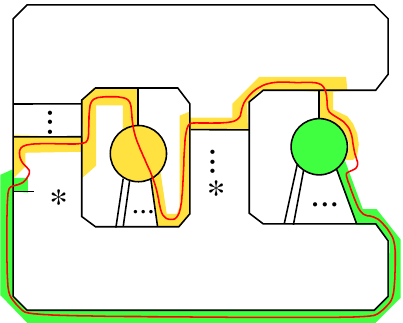} &
      \includegraphics{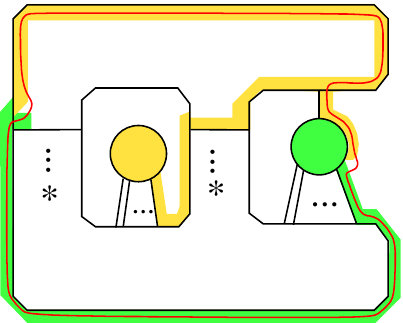} \\
      (a) & (b) & (c)
    \end{tabular}
  \end{center}
  \caption{White edges of type (1N) and (2S)}
  \label{fig:+-+-1N2S}
\end{figure}

\underline{White edges (1N), (3S).}
White edges of type (1N) and (3S) produce a diagram as in Figure~\ref{fig:+-+-1N3S}. Because a white edge is of type (3S), $T_d$ must have only one state circle. In this case, both gold tentacles originate in the southeast corner of the tentacle $T_b$, as shown, and only one diagram is possible. White faces that are not bigons may occur only where indicated by an asterisk. Notice that the outside of $\partial E$ in this case bounds only bigons, so the EPD is semi-simple.

\begin{figure}
  \centering
  \includegraphics{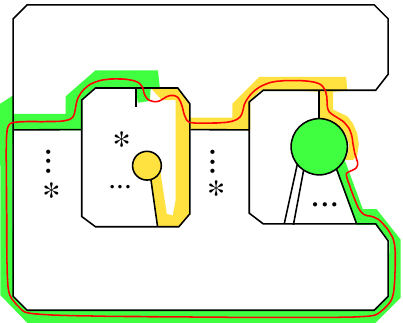}
  \caption{White edges of type (1N) and (3S)}
  \label{fig:+-+-1N3S}
\end{figure}

\underline{White edges (2N), (1S).}
A normal square with white edges of type (2N) and (1S) will have a diagram as in Figure~\ref{fig:+-+-2N1S}(a). The gold tentacle has its head on the tentacle wrapping all along the inside--bottom of the diagram.
Thus the gold face originates in the far southwest state circle in $T_a$, or if $T_a$ has no state circles, in the far northwest state circle of $T_b$. 
The magenta tentacle either originates in $T_c$, or runs from east to west in $T_c$ and originates in the southeast corner of $T_b$, or it might be the same tentacle that runs across the inside--bottom of the diagram.

\begin{figure}
  \centering
  \begin{tabular}{ccc}
    \includegraphics{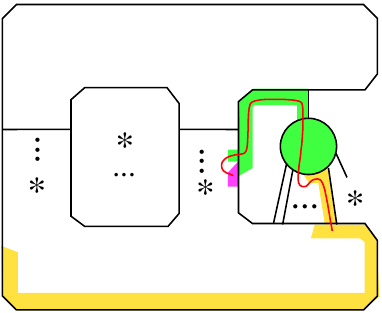} &
    \includegraphics{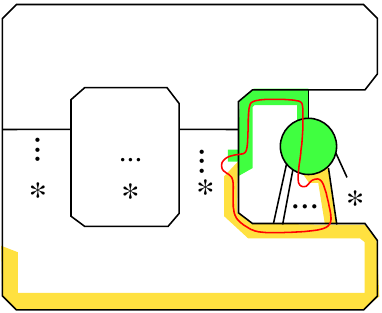} &
    \includegraphics{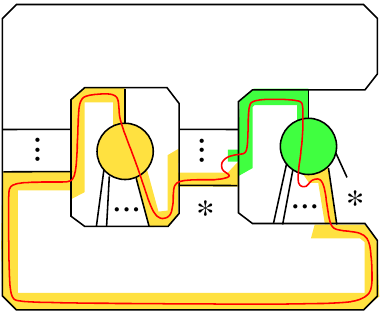} \\
    (a) & (b) & (c)
  \end{tabular}
  \caption{Type (2N) and (1S)}
  \label{fig:+-+-2N1S}
\end{figure}

If the magenta tentacle is the tentacle that runs across the inside--bottom of the diagram, then we can close up $\partial E$ in such a way that it encircles only bigons in $T_d$, as in Figure~\ref{fig:+-+-2N1S}(b). Because the gold face is simply connected, this is the only way to connect $\partial E$ up to homotopy, and so $E$ is semi-simple in this case. 

Since gold and magenta must originate in the same state circle, the only remaining possibility is that both originate in $T_b$, so $T_a$ contains no state circles, and in $T_b$, the northwest and southeast state circles agree, so $T_b$ has only one state circle. The result is shown in Figure~\ref{fig:+-+-2N1S}(c). Non-bigons may be present where indicated by an asterisk; the EPD may be complex.

\underline{White edges (2N), (2S).}
A normal square with white edges of type (2N) and (2S) has diagram as in Figure~\ref{fig:+-+-2N2S}(a). Notice that type (2S) forces $T_d$ to have only one state circle. The magenta tentacle from (2S) either
\begin{itemize}
\item originates in $T_a$,
\item or runs across $T_a$ and originates in the northwest of $T_b$,
\item or comes from the tentacle running across the inside north of the diagram, and therefore originates in the southeast of $T_b$.
\end{itemize}
The gold tentacle from (2N) either
\begin{itemize}
\item originates in $T_c$,
\item or runs across $T_c$ and originates in the southeast of $T_b$,
\item or comes from the tentacle running across the inside south of the diagram, and therefore originates the south of $T_a$,
\item or if $T_a$ has no state circles, comes from the tentacle running across the inside south of the diagram and originates in the northwest of $T_b$.
\end{itemize}

\begin{figure}
  \centering
  \begin{tabular}{ccc}
    \includegraphics{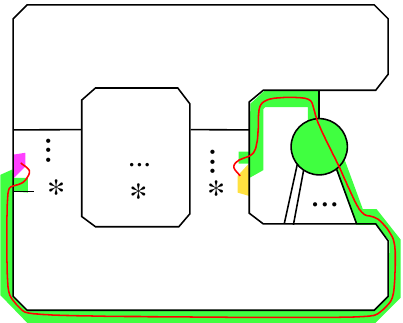} &
    \includegraphics{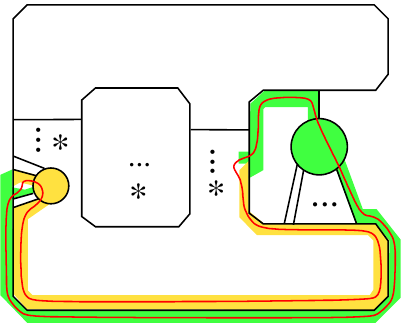} &
    \includegraphics{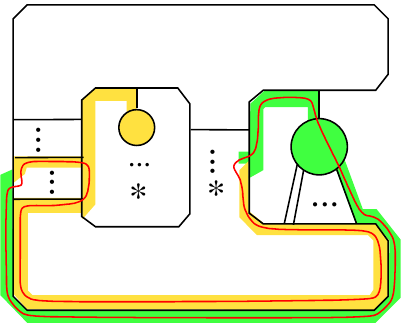} \\
    (a) & (b) & (c) \\
    \includegraphics{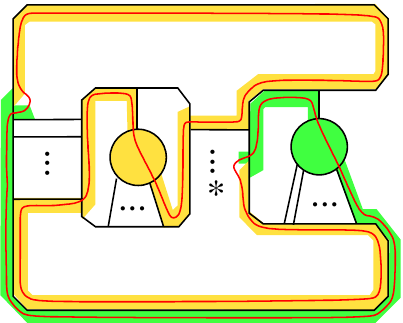} &
    \includegraphics{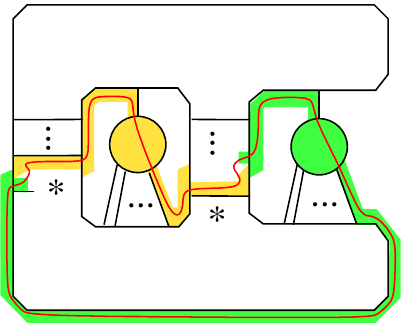} &
    \includegraphics{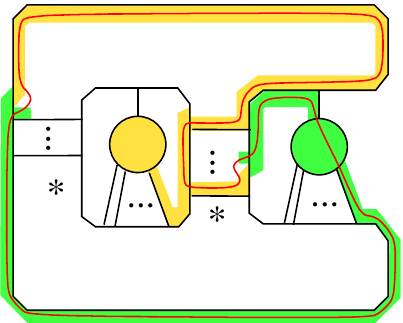} \\
    (d) & (e) & (f)
  \end{tabular}
  \caption{Type (2N) and (2S)}
  \label{fig:+-+-2N2S}
\end{figure}

Since magenta and gold originate in the same place, (2N) cannot originate in $T_c$, but there are five possibilities remaining, as follows. First, gold originates in $T_a$; this is shown in Figure~\ref{fig:+-+-2N2S}(b). Note it is semi-simple. Second, gold originates in $T_b$, the tentacle from (2N) runs around the inside south of the diagram, and the tentacle from (2S) originates in the northwest of $T_b$; this is shown in Figure~\ref{fig:+-+-2N2S}(c), and again it is semi-simple. Third, gold originates in $T_b$, the tentacle from (2N) runs around the inside south of the diagram, and the tentacle from (2S) runs around the inside north of the diagram, as in Figure~\ref{fig:+-+-2N2S}(d). Note this is also semi-simple. Fourth, gold originates in $T_b$, the tentacle from (2N) runs west to east across $T_c$, the tentacle from (2S) runs east to west across $T_a$, as in Figure~\ref{fig:+-+-2N2S}(e), which is semi-simple. Finally fifth, gold originates in $T_b$, the tentacle from (2N) runs west to east across $T_c$, and the tentacle from (2S) runs across the inside north of the diagram, as in Figure~\ref{fig:+-+-2N2S}(f). Again this is semi-simple, bounding only bigons to the outside. 

\underline{White edges (2N), (3S).}
White edges (2N) and (3S) give the diagram in Figure~\ref{fig:+-+-2N3S}(a). Since we have a (3S) white edge, there can be only one state circle in $T_d$.
The magenta tentacle originates in the southeast state circle in $T_b$. Therefore, the gold tentacle must also originate in $T_b$. This can happen one of two ways: either the gold tentacle runs west to east across $T_c$, originating in the southeast of $T_b$, or the gold tentacle runs all across the inside south of the diagram, across a tentacle of $T_a$ (which must have no state circles), and originates in the northeast of $T_b$ (which must have exactly one state circle). These two options are shown in Figure~\ref{fig:+-+-2N3S}(b) and~(c). Note both bound only bigons to one side, hence both are semi-simple.

\begin{figure}
  \centering
  \begin{tabular}{ccc}
    \includegraphics{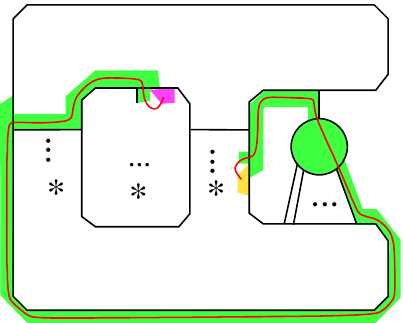} &
    \includegraphics{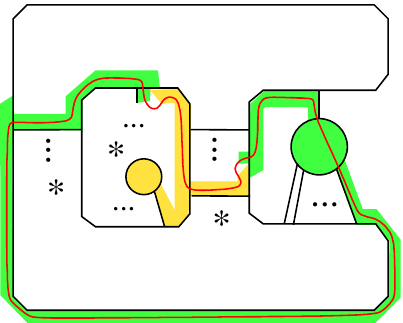} &
    \includegraphics{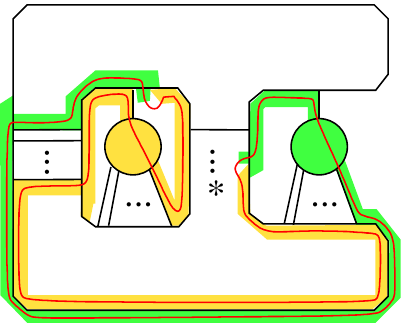} \\
    (a) & (b) & (c)
  \end{tabular}
  \caption{Type (2N) and (3S)}
  \label{fig:+-+-2N3S}
\end{figure}

\underline{White edges (3N), (1S).}
The diagram appears as in Figure~\ref{fig:+-+-3N1S}(a). Note that a type~(3N) white edge requires that $T_c$ has no state circles. The magenta tentacle originates in $T_b$, hence the gold also must originate in $T_b$. This means $T_a$ has no state circles, and the gold tentacle runs across $T_a$, originating in the northwest of $T_b$. Then the magenta originates in the same circle, and the diagram is as in Figure~\ref{fig:+-+-3N1S}(b). Note this gives a semi-simple EPD.

\begin{figure}
  \centering
  \begin{tabular}{cc}
    \includegraphics{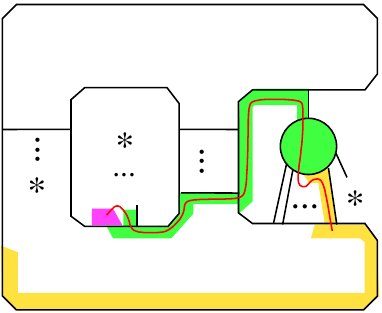} &
    \includegraphics{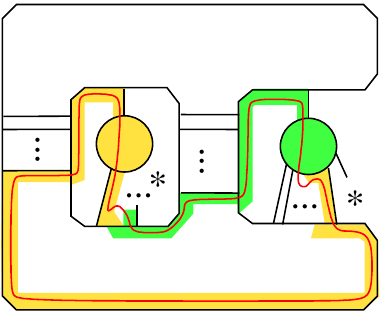} \\
    (a) & (b)
  \end{tabular}
  \caption{Type (3N) and (1S)}
  \label{fig:+-+-3N1S}
\end{figure}

\underline{White edges (3N), (2S).}
In this case, the diagram is as shown in Figure~\ref{fig:+-+-3N2S}(a). Note that (3N) implies that $T_c$ has no state circles, and (2S) implies that $T_d$ has only one state circle. The magenta tentacle originates in $T_b$. The gold face must also originate in $T_b$. This is possible if the gold tentacle either runs across $T_a$, and originates in the northwest of $T_b$, or if the gold tentacle runs across the inside north of the diagram, and originates in the southeast of $T_b$. The two options are shown in Figure~\ref{fig:+-+-3N2S}(b) and~(c). In the first case, we obtain an EPD that may be complex. In the second case, the EPD is semi-simple.

\begin{figure}
  \centering
  \begin{tabular}{ccc}
    \includegraphics{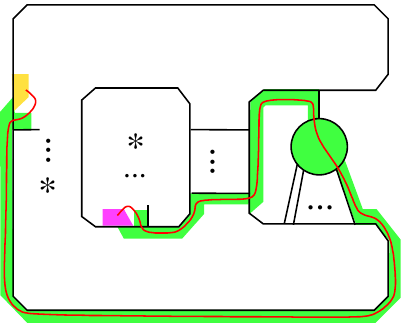} &
    \includegraphics{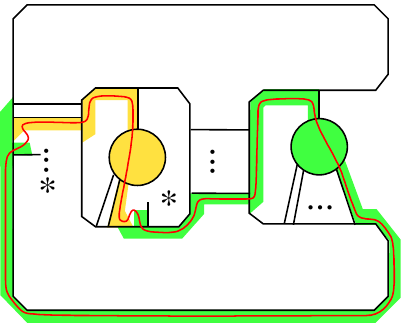} &
    \includegraphics{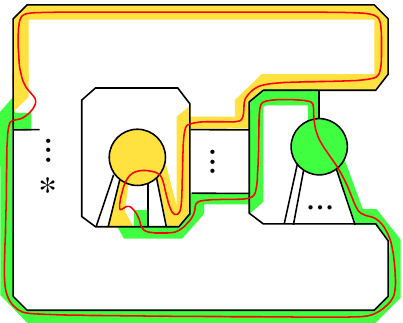} \\
    (a) & (b) & (c)
  \end{tabular}
  \caption{Type (3N) and (2S)}
  \label{fig:+-+-3N2S}
\end{figure}

\underline{White edges (3N), (3S).}
The case of type (3N) and (3S) is shown in Figure~\ref{fig:+-+-3N3S}. The gold faces meeting the green tentacles must both originate in $T_b$, at the southeast corner. Hence the EPD is as shown; it is semi-simple.

\begin{figure}
  \centering
  \includegraphics{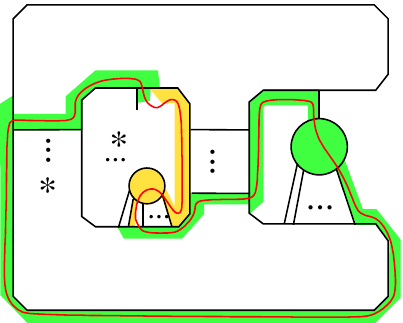}
  \caption{Type (3N) and (3S)}
  \label{fig:+-+-3N3S}
\end{figure}

This concludes the search for type~(1) complex EPDs in $P_A$. The complex EPDs found are in Figures~\ref{fig:+-+-1N1S}(b), ~\ref{fig:+-+-2N1S}(c), and~\ref{fig:+-+-3N2S}(b).
\end{proof}

\begin{lem}
Let $K$ be a $+-+-$ link with reduced, admissible diagram $D(K)$. Then there are no complex EPDs in $P_A$ which are not type~(1).
\label{lem:+-+-type2EPDs}
\end{lem}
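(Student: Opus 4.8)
The plan is to mirror the proof of Lemma~\ref{lem:++-type2EPDs} for the $++-$ case. Suppose $E$ is a complex EPD in $P_A$ that is not of type~(1). By Lemma~\ref{lem:2edgeloops}, $E$ must then be of type~(2), so $\partial E$ runs along a segment $\alpha$ spanning the positive tangle $T_a$ and a segment $\beta$ spanning the positive tangle $T_c$, each from east to west. By Lemma~\ref{lemma:Pdecomp-pmpm}, both $\alpha$ and $\beta$ have an endpoint on the state circle $G$ enclosing $T_b$, and I would color the shaded face bounded by $G$ gold. Arguing exactly as in Lemma~\ref{lem:++-type2EPDs}, the tentacles adjacent to $G$ where $\alpha$ and $\beta$ attach are gold tentacles emanating from $G$, and no tentacle head lies against $G$; hence, after pulling $E$ into a normal square by Lemma~\ref{lemma:pullnormalsquare}, the curve $\partial E$ must leave $G$ along gold tentacles running south of $\alpha$ and north of $\beta$, with its two white sides occurring at the tails of these tentacles, downstream of $\alpha$ and of $\beta$ respectively.

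I would then enumerate the possible positions of the white side downstream of each of $\alpha$ and $\beta$, just as the combinations $(\alpha i)(\beta j)$ were enumerated in the $++-$ case. The gold tentacle south of $\alpha$ either terminates inside $T_a$ or exits $T_a$ and travels along the outside of the diagram, where it may run into the negative tangle $T_d$ or back into the block $G$; symmetrically for the gold tentacle north of $\beta$. Since the $+-+-$ diagram is carried to itself by the cyclic permutation exchanging $(T_a,T_b)$ with $(T_c,T_d)$, which interchanges $\alpha$ and $\beta$, the two lists are identified and the number of combinations to check is reduced. For each combination I would determine $\partial E$ using the Two--Face Argument (Lemma~\ref{lemma:twoface}): the two white sides meet a common green face, so the green tentacles they abut must originate at a single state circle, and the description of $P_A$ in Lemma~\ref{lemma:Pdecomp-pmpm} then fixes the curve and identifies the white faces it bounds.

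The aim of the case check is to show that each combination either bounds only white bigon faces on one side---so that $E$ is semi--simple by Lemma~\ref{lem:semisimple}, contradicting complexity---or forces $\partial E$ to run north--south across a single state circle of a negative tangle, so that $E$ is type~(1), again contradicting our assumption. I expect the main obstacle to be the wrap--around configurations, in which a gold tentacle leaves its positive tangle and runs along the boundary of the diagram; this is exactly where the $+-+-$ case diverges from the $++-$ case. In the $++-$ link the two positive tangles are adjacent, so such a tentacle can terminate in the \emph{other} positive tangle and produce the complex EPDs $E_3$ and $E_4$; here the positive tangles $T_a$ and $T_c$ are separated by the block $G$ containing $T_b$, while the extra negative tangle $T_d$ sits on the outside. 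The crux is to show that this separation forces every wrap--around gold tentacle to close up against bigons of $T_a$, $T_c$, or $T_d$, so that no complex type~(2) EPD survives. Once every combination is disposed of in this way, the lemma follows.
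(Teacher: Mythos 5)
There is a genuine gap, and it occurs in your very first step. Your reduction to ``$\partial E$ runs south of $\alpha$ and north of $\beta$, with white sides at the tails of gold tentacles emanating from $G$'' is imported from the $++-$ case, where it is justified because Lemma~\ref{lemma:Pdecomp-ppm}(1) makes $G$ an \emph{innermost} disk: every tentacle adjacent to $G$ originates at $G$ itself, so no tentacle head (hence no white side) can lie against $G$. In the $+-+-$ case this hypothesis fails: by Lemma~\ref{lemma:Pdecomp-pmpm}(1), $G$ is the state circle \emph{containing} the negative tangle $T_b$, so $G$ is not innermost and there is no single shaded face ``bounded by $G$'' to color gold. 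The shaded faces adjacent to $\alpha$ and $\beta$ originate at innermost state circles \emph{inside} $T_b$ and $T_d$: the tentacle along the north of $\alpha$ originates in the southeast state circle of $T_d$, the one along the south of $\alpha$ in the northwest state circle of $T_b$, and symmetrically for $\beta$. These are in general distinct faces, heads of tentacles can and do occur against $G$, and nothing forces $\partial E$ onto one particular side of each segment. Consequently the paper's proof cannot restrict to one side combination; it must treat all four cases $\alpha$N$\beta$N, $\alpha$N$\beta$S, $\alpha$S$\beta$N, $\alpha$S$\beta$S, in each case tracking which state circle of $T_b$ or $T_d$ each tentacle originates in.

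The mechanism that kills the cases is also different from what you anticipate. You expect each combination to close up against bigons (semi-simplicity) or to cross a negative tangle north--south (type~(1)). In the paper, the decisive tool in the mixed cases ($\alpha$N$\beta$S and $\alpha$S$\beta$N) is the Two--Face Argument (Lemma~\ref{lemma:twoface}) used as a \emph{counting} obstruction: tracing the origins of the tentacles forces $\partial E$ to meet three pairwise-distinct shaded faces (e.g.\ one originating in $T_d$, one in $T_b$, plus the third met at the white side), contradicting the fact that a normal square runs through only two shaded faces. Your cyclic-symmetry observation identifying the $\alpha$-list with the $\beta$-list is fine as a bookkeeping device, but it does not repair the initial reduction, and without that reduction your enumeration of white-side locations is not the correct one. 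To fix the proof you would need to abandon the gold-$G$ normalization entirely and redo the four-way case analysis with origins inside $T_b$ and $T_d$, which is precisely what the paper does.
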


\begin{proof}
Let $E$ be a complex EPD in $P_A$ that is not type~(1); then $E$ must be type~(2). That is, $\partial E$ runs along segments across both $T_a$ and $T_c$. For convenience, let's call these two segments $\alpha$ and $\beta$. Now, $\partial E$ may run along either the north or south sides of these segments. Therefore we have four possibilities, listed below. By Lemma~\ref{lemma:pullnormalsquare}, we may pull $E$  into a normal square with white edges at tails of the gold tentacles. 

\underline{Case $\alpha$N, $\beta$N.}
We interpret the notation $\alpha$N, $\beta$N to mean that $\partial E$ runs along the north side of $\alpha$ and the north side of $\beta$. The tentacle running along the north side of $\alpha$ originates in the southeast state circle in $T_d$. The tentacle running along the north side of $\beta$ originates in the southeast state circle in $T_b$. These two cannot agree, hence one is green and one is gold; say the tentacle running along $\alpha$ is green, the one along $\beta$ is gold. 

After running west to east along the north of $\alpha$, $\partial E$ must meet a white face at the head of a green tentacle. The only option is that there is a white face at the north of $T_b$. Similarly, after running west to east along the north of $\beta$, $\partial E$ must meet a white face at the tail of gold. There are three options. First, if there is only one state circle in $T_d$, then the tentacle along the west side of $T_d$ will be green, and $\partial E$ could meet the head of a green tentacle on the east side of $T_c$. This is shown in Figure~\ref{fig:+-+-aNbN}(a); note the result is type~(1), contrary to assumption. Second, again if there is only one state circle in $T_d$, then $\partial E$ could run across the north of $T_d$, into a tentacle in $T_d$, to the head of a green tentacle in $T_d$. This is shown in Figure~\ref{fig:+-+-aNbN}(b). Note this is again type~(1), contradicting assumption. Finally, $\partial E$ could follow a gold tentacle all across the inside north of the diagram, meeting the head of a green tentacle in $T_a$. This is shown in Figure~\ref{fig:+-+-aNbN}(c); note the result is a simple EPD.

\begin{figure}
  \centering
  \begin{tabular}{ccc}
    \includegraphics{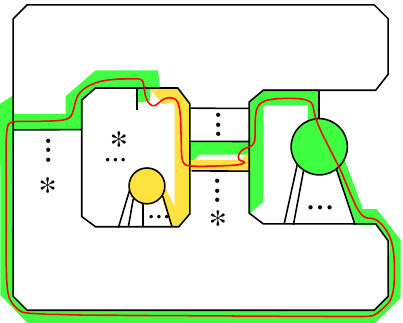} &
    \includegraphics{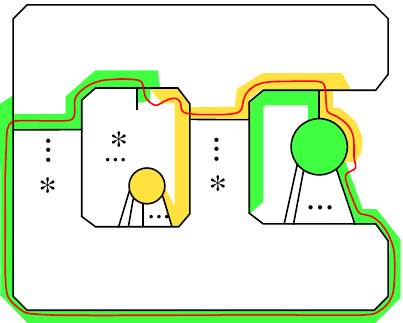} &
    \includegraphics{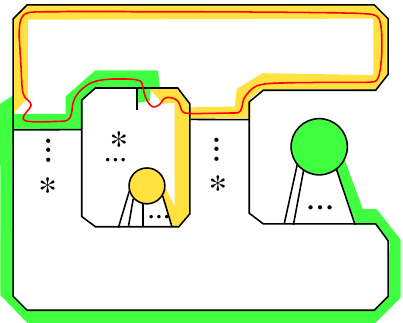} \\
    (a) & (b) & (c)
  \end{tabular}
  \caption{$\partial E$ runs along the north of $\alpha$, north of $\beta$}
  \label{fig:+-+-aNbN}
\end{figure}

\underline{Case $\alpha$N, $\beta$S.}
Again, the tentacle running over $\alpha$ to the north originates in the southeast state circle of $T_d$; color this face green. The tentacle running over $\beta$ to the south originates in the northwest state circle of $T_d$. Suppose first that these state circles of $T_d$ are not the same; so we color the face running over $\beta$ to the south gold. Now, as in the previous case $\partial E$ must run east out of $\alpha$ to a white edge, which must be at a head of green. There is only one possible head of green, at the north of $T_b$, as before. However, at this white edge $\partial E$ runs into the face originating in the southeast of $T_b$; we color this face magenta. Then $\partial E$ runs through the green, magenta, and gold faces, which are all distinct. This contradicts Lemma~\ref{lemma:twoface}.

Therefore, it must be that the northwest and southeast state circles of $T_d$ are the same, and this face is green. Note that $\partial E$ runs from $\beta$ to a white face, at the head of a green tentacle. The only way to meet the head of a green tentacle running west across $\beta$ is if $T_c$ has no state circles, $\beta$ is the far south segment in $T_c$, and $\partial E$ runs around the outside south of $T_b$ to meet a white face just inside $T_c$. Then $\partial E$ must be as shown in Figure~\ref{fig:+-+-aNbS}. This is a type~(1) EPD, contrary to assumption.

\begin{figure}
  \centering
  \includegraphics{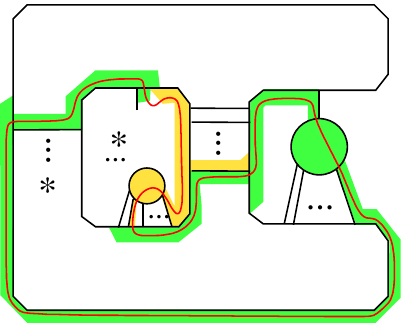}
  \caption{$\partial E$ runs along the north of $\alpha$, south of $\beta$}
  \label{fig:+-+-aNbS}
\end{figure}

\underline{Case $\alpha$S, $\beta$N.}
The tentacle running over the south of $\alpha$ originates in the northwest state circle in $T_b$. The tentacle running over the north of $\beta$ originates in the southeast of $T_b$. If these two state circles are the same, then between $\alpha$ and $\beta$, $\partial E$ must run over $T_b$ from north to south, and $E$ is of type~(1); contradiction.

So assume the northwest and southeast state circles of $T_b$ are distinct. Color the northwest state circle gold, and the southeast one green. After $\partial E$ runs east to west along the gold face to the south of $\alpha$, it must meet a white face at the tail of a gold tentacle. There are three ways this can happen, but we claim that all lead to a face originating in $T_d$, hence give rise to a distinct color. First, the gold tentacle to the south of $\alpha$ can terminate in $T_a$. In this case $\partial E$ meets a white face in $T_a$ and jumps to the tentacle running around the outside south of the diagram. This face originates in the southeast of $T_d$. So suppose, second, the gold tentacle south of $\alpha$ runs all along the inside south of the diagram, and $\partial E$ follows it to where it terminates, in $T_c$. Then it jumps to a green tentacle originating in the northwest of $T_d$. So finally, suppose the gold tentacle south of $\alpha$ runs along the inside south of the diagram, but $\partial E$ follows a new gold tentacle from the south of $T_d$ into $T_d$. Then this tentacle has its tail on a state circle in $T_d$, which must be where the green face originates. All three options require the green face to originate in $T_d$ and in $T_b$, which is impossible. 

\underline{Case $\alpha$S, $\beta$S.}
As in the previous case, the tentacle on the south of $\alpha$ originates in the north of $T_b$. The tentacle on the south of $\beta$ originates in the north of $T_d$. Color the tentacle south of $\alpha$ gold, and the one south of $\beta$ green. Note that $\partial E$, after running east to west along $\beta$, must run to a white edge at the head of a green tentacle. The only way this is possible is if $T_c$ contains no state circles, and $\beta$ is the segment at the far south of $T_c$. The white face must occur at the south of $T_b$, jumping from the head of a green tentacle to the tail of a gold tentacle. This gold tentacle originates in a state circle in $T_b$. This state circle must agree with the origin of the gold tentacle at the south of $\alpha$. In order to form an EPD that is not of type~(1), $\partial E$ must run over the far west side of $T_b$, as in Figure~\ref{fig:+-+-aSbS}(a).

\begin{figure}
\centering
\begin{tabular}{cc}
\includegraphics{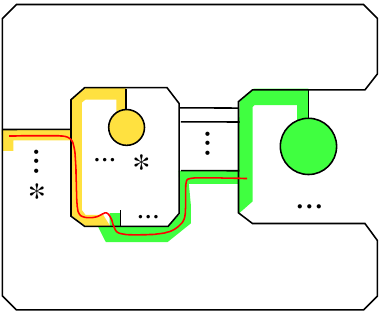} &
\includegraphics{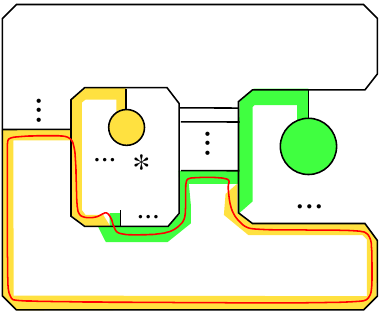} \\
(a) & (b) \\
\end{tabular}
\caption{$\partial E$ runs along the south of $\alpha$, south of $\beta$}
\label{fig:+-+-aSbS}
\end{figure}

Now after $\partial E$ runs east to west along the south of $\alpha$, it must run to the tail of a gold tentacle and the head of a green. Just as in the previous case, there are three possibilities: the gold tentacle terminates in $T_a$, the gold tentacle runs all along the inside south of the diagram and terminates in $T_c$ and $\partial E$ follows it to $T_c$, or the gold tentacle runs along the inside south of the diagram, but $\partial E$ follows a new gold tentacle into $T_d$. The first and last cases can only happen if there is just one state circle in $T_d$ and $\partial E$ runs from south to north of $T_d$, contradicting the fact that $E$ is not type~(1). The middle case is shown in Figure~\ref{fig:+-+-aSbS}(b). Note it results in an EPD bounding a bigon, so it is not complex. 

This completes the analysis of all cases.
\end{proof}

\begin{prop}
  Let $K$ be a $+-+-$ link with reduced, admissible diagram $D(K)$. Then $||E_c|| \leq 1$,  where $||E_c||$ is the number of complex EPDs required to span $P_A$.
  \label{prop:+-+-prop}
\end{prop}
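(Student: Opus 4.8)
The plan is to follow the template of the $++-$ case, Proposition~\ref{prop:++-prop}. To prove $||E_c|| \leq 1$ it is enough to show that for any two complex EPDs $E$ and $E'$ in $P_A$, the disk $E'$ is equivalent under parabolic compression to a subcollection of $E_s \cup E$; by Lemma~\ref{lem:equivalentunderparaboliccompression} this follows as soon as $\partial E$ and $\partial E'$ differ only by white bigons. By Lemma~\ref{lem:+-+-type2EPDs} every complex EPD is of type~(1), hence runs from north to south through one of the two negative tangles $T_b$ or $T_d$, and by Lemma~\ref{lem:+-+-type1EPDs}, after the cyclic relabeling that places the relevant negative tangle at $T_d$, it is parabolically equivalent to one of exactly three models: the (1N), (1S) disk of Figure~\ref{fig:+-+-1N1S}(b), the (2N), (1S) disk of Figure~\ref{fig:+-+-2N1S}(c), or the (3N), (2S) disk of Figure~\ref{fig:+-+-3N2S}(b). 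Denote these $E_1, E_2, E_3$.

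First I would record the structural constraints each model forces, reading them from the proof of Lemma~\ref{lem:+-+-type1EPDs} together with Lemma~\ref{lemma:Pdecomp-pmpm}: $E_1$ and $E_2$ require $T_a$ to have no interior state circles and $T_b$ to have a single state circle, while $E_3$ requires $T_c$ to have no interior state circles and $T_d$ to have a single state circle. By Lemma~\ref{lemma:Pdecomp-pmpm}(3)--(4) a positive tangle with no interior state circles and a negative tangle with a single state circle contribute only bigon white faces, so these constraints determine exactly which tangles can carry the non-bigon faces that make an EPD complex, and on which side of $\partial E_i$ those faces lie.

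The same-tangle case, in which $E$ and $E'$ both run through $T_d$ (the case through $T_b$ being handled identically after the same cyclic relabeling), is then routine. Each of $E, E'$ is one of $E_1, E_2, E_3$, and I would compare the three models in pairs just as in Proposition~\ref{prop:++-prop}: in every pair the two boundaries enter and leave $T_d$ along the innermost disk $I$ and return through $G$ and $T_b$ along parallel routes, so the region between them meets only bigon white faces of $T_b$, $T_c$, and $T_d$, and Lemma~\ref{lem:equivalentunderparaboliccompression} gives the equivalence.

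The main obstacle is the cross-tangle case, which has no analogue in the $++-$ setting because there is only one negative tangle there: $E$ runs through $T_d$ while $E'$ runs through $T_b$, and the two cannot both be relabeled to position $T_d$ at once. Here I would compare the $T_d$-constraints for $E$ with the cyclically shifted $T_b$-constraints for $E'$ in the single fixed diagram: a complex EPD through $T_d$ needs a non-bigon white face in $T_c$ or $T_d$ and forces $T_a$ to be state-circle-free and $T_b$ to have one state circle, while the shifted statement shows a complex EPD through $T_b$ needs a non-bigon face in $T_a$ or $T_b$ and forces $T_c$ to be state-circle-free and $T_d$ to have one state circle. Comparing these requirements eliminates the incompatible pairings outright; for example, $E_1$ through $T_d$ and the (1N), (1S) model through $T_b$ cannot coexist, since the first forces both $T_a$ and $T_b$ to be bigon-only while the second demands a non-bigon face in $T_a$ or $T_b$. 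For the few surviving pairings the constraints confine every non-bigon white face of the whole diagram to a single negative tangle, and I would then trace $\partial E$ and $\partial E'$ in this very restricted diagram and check that they bound those non-bigon faces on the same side, so that Lemma~\ref{lem:equivalentunderparaboliccompression} again applies. I expect this last step---tracking which side of each boundary the non-bigon faces fall on when the two disks are based at opposite negative tangles---to be the delicate point of the argument.
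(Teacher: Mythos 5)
Your overall skeleton is the paper's: by Lemma~\ref{lem:+-+-type2EPDs} every complex EPD is type~(1), by Lemma~\ref{lem:+-+-type1EPDs} it is one of the three models $E_1$, $E_2$, $E_3$, and pairwise comparison via Lemma~\ref{lem:equivalentunderparaboliccompression} finishes. Your per-model constraints in the second paragraph are also correct. The problem is the blanket claim driving your cross-tangle case: ``a complex EPD through $T_d$ needs a non-bigon white face in $T_c$ or $T_d$ and forces $T_a$ to be state-circle-free and $T_b$ to have one state circle'' is \emph{false} for the third model. The (3N),(2S) disk $E_3$ has its innermost disk $I$ in $T_d$, yet it forces $T_c$ to have no state circles and $T_d$ to have a single one, and its potential non-bigon faces lie in $T_a$ and $T_b$ --- exactly opposite to your claim, and in contradiction with the constraints you yourself recorded one paragraph earlier. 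Taken literally, your cross-tangle comparison would declare \emph{every} mixed pair incompatible, which both contradicts your subsequent mention of ``surviving pairings'' and wrongly dismisses a realizable configuration: in a diagram where $T_c$ has no state circles, $T_d$ has one, and $T_a$, $T_b$ carry non-bigon faces, the disk $E_3$ (based at $T_d$) and the (1N),(1S) model based at $T_b$ can both be complex and genuinely coexist; they must be compared, not ruled out.

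The observation that repairs this --- and the one the paper uses, making the ``delicate point'' you anticipate evaporate --- is that $E_3$ is literally the cyclic permutation of $E_2$: the (3N),(2S) model based at one negative tangle \emph{is} the (2N),(1S) model based at the other. Consequently the list $E_1$, $E_2$, $E_3$, drawn in the fixed diagram, already contains every complex EPD based at either negative tangle except the (1N),(1S) model at $T_b$, and that one is eliminated exactly by the constraint-incompatibility argument you gave (your $(E_1,E_1')$ example is correct, by Lemma~\ref{lemma:Pdecomp-pmpm}(3)--(4)). The surviving mixed pairs are then cyclic shifts of the same-tangle pair $(E_1,E_2)$, so the tracing you defer to the end is just the comparison you already carried out there: both boundaries pass through the single state circle of one negative tangle along parallel segments and differ only by bigons. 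Your proof closes once the blanket cross-tangle claim is replaced by the per-model constraints together with the identification of $E_3$ as the shift of $E_2$; as written, that step would fail.
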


\begin{proof}
Again to obtain the desired result, we need to show that if $E$ and $E'$ are two complex EPDs in $P_A$, then $E'$ is equivalent under parabolic compression to a subset of $E\cup E_s$.

In Lemmas~\ref{lem:+-+-type1EPDs} and~\ref{lem:+-+-type2EPDs}, we found all possible complex EPDs in $P_A$. These are all type~(1), and appear in Figures~\ref{fig:+-+-1N1S}(b), \ref{fig:+-+-2N1S}(c), and~\ref{fig:+-+-3N2S}(b), which we reproduce in Figure~\ref{fig:+-+-complex} for convenience, calling the EPDs $E_1$, $E_2$, and $E_3$. We compare the complex EPDs pairwise. 

\begin{figure}
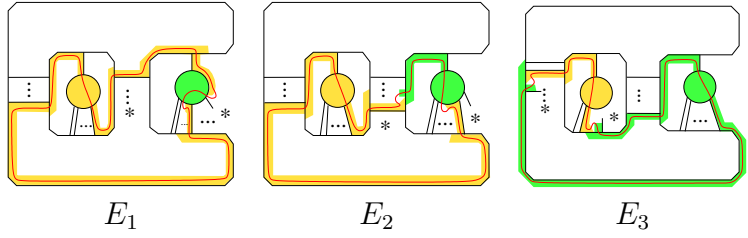

  \centering
  \begin{tabular}{ccc}
    \includegraphics[width=1.2in]{figures/Type1N1S+-+-EPD.pdf} &
    \includegraphics[width=1.2in]{figures/Type2N1S+-+-EPD2.pdf} &
    \includegraphics[width=1.2in]{figures/Type3N2S+-+-EPD1.pdf} \\
    $E_1$ & $E_2$ & $E_3$
  \end{tabular}
  \caption{Complex EPDs in $P_A$ for a $+-+-$ link}
  \label{fig:+-+-complex}
\end{figure}

Consider first the pair $E_1$ and $E_2$. Both run over the same segments of $T_a$ and $T_b$. In $T_c$, $E_1$ runs over the northernmost segment while $E_2$ runs over a different, parallel segment. However, both then run to the northernmost segment of $T_d$, and then to the south of $T_d$ where they meet again. Thus these two EPDs differ only by a collection of bigons in $T_c$ and possibly a collection of bigons in $T_d$. By Lemma~\ref{lem:equivalentunderparaboliccompression}, the two EPDs are equivalent under parabolic compression.

If $E_1$ and $E_3$ both appear in $P_A$, then note that the asterisks in both figures will be replaced by bigons, and again the two EPDs differ only by bigons. Finally, $E_2$ and $E_3$ also differ only by bigons (in fact, $E_2$ is a cyclic permutation of $E_3$). Lemma~\ref{lem:equivalentunderparaboliccompression} implies the EPDs are equivalent under parabolic compression.
\end{proof}

\section{Proofs of Main Results}\label{sec:proofs}
In this section, we complete the proofs of the main theorems. To do so, we use the following theorem from \cite{FKP}.

\begin{thm}[Theorem~5.14 \cite{FKP}]
Let $D = D(K)$ be a prime $A$--adequate diagram of a hyperbolic link $K$. Then 
\[
\text{vol}(S^3 \setminus K) \geq v_8 (\chi_-(\G'_A) - ||E_c||),
\]
where $\chi_-(\cdot)$ is the negative Euler characteristic, $\G'_A$ is the reduced $A$--state graph, and $\E$ is the number of essential product disks required to span the upper polyhedron of the knot complement.
\label{thm:estimateguts}
\end{thm}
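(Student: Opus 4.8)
The plan is to deduce this estimate from the guts technology of Agol, Storm, and Thurston \cite{AST}, applied to the essential spanning surface $S_A$. The starting point is the AST inequality: if $M$ is a hyperbolic $3$--manifold and $S\subset M$ is an essential surface, then
\[
\vol(M) \geq v_8\,\X\big(\guts(M\cutalong S)\big),
\]
where $\guts(M\cutalong S)$ denotes the non--$I$--bundle, non--solid-torus part of the characteristic submanifold of the pared manifold $M\cutalong S$. Thus the whole problem reduces to (a) producing an essential surface, and (b) computing, or at least bounding below, the negative Euler characteristic of its guts.

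For step (a) I would take $S = S_A$, the $A$--state surface of $D(K)$. Since $D$ is $A$--adequate, Theorem~\ref{thm:essential} (Ozawa) guarantees that $S_A$ is essential in $S^3\setminus K$, so the AST inequality applies with $M_A = S^3\cutalong S_A$. It then remains to identify
\[
\X\big(\guts(M_A)\big) = \X(\G'_A) - ||E_c||.
\]
For step (b) I would use the polyhedral decomposition of $M_A$ recalled above. The lower polyhedra are product pieces and contribute nothing to the guts, so all of the relevant topology is carried by the upper polyhedron $P_A$, a checkerboard-colored ideal polyhedron with $4$--valent vertices (Lemma~\ref{lem:checkerboardidealpolyhedron}). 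The strategy is to identify the maximal $I$--bundle of $M_A$ inside $P_A$ and excise it: an untwisted model of the shaded surface contributes exactly $\X(\G'_A)$ to the negative Euler characteristic of the candidate guts, while the maximal $I$--bundle meets $P_A$ in essential product disks. The simple EPDs (regular neighborhoods of white bigon faces) are the product regions that do not alter the guts, and a minimal spanning family of complex EPDs, $E_c$, records the remaining $I$--bundle that must be removed. Cutting these off lowers $-\chi$ by precisely $||E_c||$, yielding the displayed formula and hence the theorem.

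The main obstacle is the bookkeeping in step (b): one must show that the maximal $I$--bundle of $M_A$ is spanned by EPDs of the upper polyhedron, and that excising a minimal complex spanning set $E_c$ (characterized in Lemma~\ref{lem:definitionE_c} and detected by the parabolic-compression criterion of Lemma~\ref{lem:equivalentunderparaboliccompression}) changes $-\chi$ by exactly $||E_c||$. This requires a careful normal-surface analysis inside $P_A$: one parabolically compresses product disks until only complex disks remain, verifies that simple and semi-simple disks are absorbed into the $I$--bundle without affecting the guts, and confirms that the resulting guts is genuinely hyperbolic, with no leftover annular or product pieces. Establishing that no additional, non-polyhedral $I$--bundle contributions arise, so that the polyhedral decomposition faithfully computes the characteristic submanifold, is the delicate point, and is exactly where $A$--adequacy and the prime hypothesis on $D$ are used.
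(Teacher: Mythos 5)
First, a point of orientation: the paper does not prove this statement at all --- it is imported wholesale as Theorem~5.14 of \cite{FKP} (combined with the Agol--Storm--Thurston volume inequality), and the present paper's entire contribution is to bound the term $||E_c||$ appearing in it for Montesinos links with three or four tangles. So there is no proof in this paper to compare yours against; the relevant comparison is with the proof in \cite{FKP} itself. Your outline does follow that proof in broad strokes: the AST inequality applied to the state surface $S_A$, which is essential by Ozawa's theorem since $D$ is $A$--adequate, plus the identification $\X(\guts(M_A)) = \X(\G'_A) - ||E_c||$.

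As a proof, however, your proposal has a genuine gap: step (b), the guts identification, is precisely the content of the theorem, and you assert it rather than prove it --- in \cite{FKP} it occupies an entire chapter. Concretely, what is missing is (i) a proof that the characteristic $I$--bundle of $M_A$ meets each polyhedron of the decomposition in essential product disks (a normal surface argument, and one of the places where primeness of $D$ enters); (ii) a correct treatment of the lower polyhedra --- your claim that they ``are product pieces and contribute nothing to the guts'' is false as stated, since in \cite{FKP} the lower polyhedra are genuine checkerboard ideal polyhedra corresponding to alternating pieces of the diagram, and the correct statement, which itself requires proof, is that every EPD in a lower polyhedron is spanned by simple (bigon) disks, so that no \emph{complex} disks arise outside the upper polyhedron; and (iii) the counting argument showing that discarding parallel edges (passing from $\G_A$ to $\G'_A$) accounts exactly for the simple disks, and that each complex disk in a minimal spanning set lowers $-\chi$ by exactly one, so that the guts identity holds as an equality rather than merely an inequality in one direction. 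Without (i)--(iii) what you have written is a faithful road map of the argument in \cite{FKP}, not a proof of the theorem.
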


\begin{proof}[Proof of Theorem~\ref{thm:KJmain}]
Let $K$ be a hyperbolic Montesinos link with reduced admissible diagram. 
We will use Theorem~\ref{thm:estimateguts} to bound $vol(S^3 \setminus K)$. The hypotheses of the theorem are that $K$ is prime and that $D$ is $A$--adequate. The diagram $D$ is prime by Proposition~\ref{prop:montesinosPrime}. By Theorem~\ref{thm:lickorish}, the diagram is either $A$ or $B$--adequate. If not $A$--adequate, then it must be $B$--adequate, and so the mirror image is $A$--adequate. Taking the mirror image does not change the volume of the knot complement, so we may assume that the diagram is $A$--adequate.

Now we may use Theorem~\ref{thm:estimateguts}. Notice that we obtain the desired volume bound if we show $||E_c|| \leq 1$. We divide into several simple cases.

\textbf{Case 1.} Suppose $K$ has either all positive or all negative tangles. Then the diagram is alternating, so $||E_c|| = 0$ (see \cite{lackenby}).

\textbf{Case 2.}
Suppose $K$ has three tangles, with slopes not all the same sign. If $K$ is a $++-$ link, then by Proposition~\ref{prop:++-prop}, $||E_c|| \leq 1$, so the desired volume bound holds. If $K$ is a $+--$ tangle, then apply that proposition to the mirror image. These are the only links in this case, up to cyclic permutation.

\textbf{Case 3.} Suppose $K$ has four tangles, with slopes not all the same sign. Up to cyclic permutation, there are four ways the positive and negative tangles can be arranged. These are:

\begin{enumerate}
\item $+-+-$: By Proposition~\ref{prop:+-+-prop}, $||E_c|| \leq 1$. Thus the desired volume bound holds.

\item $++--$: As explained in Remark~\ref{remark:++--}, the link is equivalent to a link of type $+-+-$, and so $||E_c||\leq 1$. 

\item $+++-$ or $+---$: By Theorem~\ref{thm:threepositive}, $||E_c|| = 0$ for such a link or its mirror image.
\end{enumerate}

\textbf{Case 4.} Suppose $K$ has five or more tangles. Then $K$ must have at least three positive or at least three negative tangles. Thus Theorem~\ref{thm:threepositive} applies to $K$ or its mirror image.

Therefore $||E_c|| \leq 1$ in all cases.
\end{proof}

We now turn our attention to the proof of Theorem~\ref{thm:KJ2}. This theorem generalizes \cite[Theorem~9.12]{FKP}, which requires at least three positive and at least three negative tangles. Much of the proof of that result goes through in the setting of fewer positive and negative tangles. 

\begin{lem}\label{lem:twistnumber}
Let $K$ be a Montesinos link that admits a reduced, admissible diagram $D$ with at least two positive and at least two negative tangles. Let $\G'_A$ and $\G'_B$ be the reduced all--$A$ and all--$B$ graphs associated to $D$. Then 
\[
-\chi(\G'_A) - \chi(\G'_B) \geq t(K) - Q_{1/2}(K) - 2
\]
where $Q_{1/2}(K)$ is the number of rational tangles whose slope has absolute value $|q| \in [1/2,1)$.
\end{lem}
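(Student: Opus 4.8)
The plan is to translate the statement into a count of edges and vertices of the two reduced state graphs and then compare with the twist number band-by-band. Write $|s_A|$ and $|s_B|$ for the number of state circles of the all--$A$ and all--$B$ states. Since $D$ is connected, the graphs $H_A$ and $H_B$ are connected, hence so are $\G'_A$ and $\G'_B$; for a connected graph $G$ one has $-\chi(G)=E(G)-V(G)$. The vertices of $\G'_A$ are exactly the all--$A$ state circles, and similarly for $B$, so
\[
-\chi(\G'_A)-\chi(\G'_B) = \big(E(\G'_A)+E(\G'_B)\big) - \big(|s_A|+|s_B|\big).
\]
Thus it suffices to bound $E(\G'_A)+E(\G'_B)$ from below and $|s_A|+|s_B|$ from above.

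First I would count edges using the tangle structure of Lemmas~\ref{lemma:HAPosAdmissibleTangle} and~\ref{lemma:HANegAdmissibleTangle}. In an admissible diagram each twist region (each band $a_i$) is resolved by the all--$A$ state either as a bundle of parallel segments joining a single pair of state circles, or as a chain of circles joined in a path; moreover a band is a bundle in the $A$--state exactly when it is a chain in the $B$--state, and conversely. A bundle contributes one edge to the reduced graph, while a chain of $k$ crossings contributes $k$ edges and $k-1$ new vertices. Summing over all bands, and using that the bundle bands of $A$ are precisely the chain bands of $B$ (so their crossing counts partition the $c$ crossings of $D$ and their number partitions the $t(K)$ twist regions), I expect the identity
\[
E(\G'_A)+E(\G'_B) = c + t(K) - \rho,
\]
where $\rho\ge 0$ counts the \emph{coincidences}: pairs of distinct bands that are merged to a single edge of $\G'_A$ or of $\G'_B$ because they join the same two state circles. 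The hypothesis of at least two positive and at least two negative tangles enters here: by Theorem~\ref{thm:lickorish} the diagram is both $A$-- and $B$--adequate, so no segment joins a state circle to itself and the passage to the reduced graphs deletes only genuine multiple edges, making this pair-count well defined.

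Next I would invoke the classical bound $|s_A|+|s_B|\le c+2$, valid for any connected diagram (it is equivalent to non-negativity of the genus of the Turaev surface, whose Euler characteristic is $|s_A|+|s_B|-c$). Combining this with the displayed identity gives
\[
-\chi(\G'_A)-\chi(\G'_B) \ge \big(c+t(K)-\rho\big)-\big(c+2\big) = t(K)-\rho-2,
\]
so the lemma reduces to the single inequality $\rho\le Q_{1/2}(K)$.

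The main obstacle is this last step, and it is precisely where $Q_{1/2}(K)$ is forced. The claim is that a coincidence can be created only by a tangle whose slope has absolute value in $[1/2,1)$, and that each such tangle creates at most one, in exactly one of the two states. I would prove this by analyzing where two distinct bands can join the same pair of state circles: within a single tangle, Lemmas~\ref{lemma:HAPosAdmissibleTangle} and~\ref{lemma:HANegAdmissibleTangle} show the vertical bands join the boundary circle $S_0$ to \emph{distinct} circles $S_i$ while the chains pass through fresh interior circles, so no coincidence can arise unless the topmost band degenerates. A slope with $|q|\in[1/2,1)$ is exactly the case of a degenerate top band (for instance $a_{n-1}=1$ when the continued fraction has length $n\ge 3$, or $q=\pm 1/2$), and there the lone top edge becomes parallel in the reduced graph to an edge coming from the adjacent closure or neighboring tangle, yielding one coincidence. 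Verifying that this is the only mechanism, and that a given tangle contributes to $\rho$ in at most one of $\G'_A,\G'_B$, is a finite case check over the neighboring configurations allowed by reducedness and admissibility; carrying it out gives $\rho\le Q_{1/2}(K)$ and completes the proof.
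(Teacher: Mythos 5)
Your overall scheme is the same counting argument as the paper's (edges minus vertices of both reduced graphs, compared against the Turaev surface), but the step you reduce everything to --- the inequality $\rho \le Q_{1/2}(K)$ --- is false, and this is a genuine gap rather than a checkable detail. Your mechanism analysis only looks for coincidences \emph{inside} a single tangle, but there are coincidences between bands lying in \emph{different} tangles. When the diagram has exactly two positive tangles, every east--west segment across either of them joins the same pair of state circles: in the $+-+-$ case these are the circle $G_1$ containing $T_b$ and the circle $G_2$ containing $T_d$ (items (1) and (2) of Lemma~\ref{lemma:Pdecomp-pmpm}); this is exactly the third type of $2$--edge loop in \cite[Lemma~8.14]{FKP}, i.e.\ type~(2) of Lemma~\ref{lem:2edgeloops}, and it occurs regardless of the slopes. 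Concretely, take slopes $(1/3,-1/3,1/3,-1/3)$. The all--$A$ state has six circles ($G_1$, $G_2$, and two interior circles in each negative tangle); the three segments across $T_a$ and the three across $T_c$ all join $G_1$ to $G_2$, so those two bands merge to a single edge of $\G'_A$, giving $e'_A = 1+3+3 = 7$ and $-\chi(\G'_A)=1$; symmetrically $-\chi(\G'_B)=1$. In your notation $E(\G'_A)+E(\G'_B)=14=c+t-2$, so $\rho = 2$, while $Q_{1/2}(K)=0$.

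This example also shows the gap cannot be absorbed elsewhere in your scheme, because the lemma is sharp there: $-\chi(\G'_A)-\chi(\G'_B) = 2 = t(K)-Q_{1/2}(K)-2$, whereas your chain of inequalities can give at most $t-\rho-2 = 0$. The missing $2$ sits in your other estimate: you use the genus bound $|s_A|+|s_B|\le c+2$, but for these diagrams the Turaev surface is exactly a torus (it is a cyclic sum of alternating tangles, and the diagram is non-alternating), so $|s_A|+|s_B|=c$ on the nose --- your bound is slack by precisely the $2$ that $\rho$ exceeds $Q_{1/2}$ by. This is how the paper's proof is organized: it uses the exact equality $v_A - e_A + v_B = \chi(T) = 0$, and in exchange allows the discarded-edge count to be $c(D)-t(D)+Q_{1/2}(D)+2$, the $+2$ accounting for the one cross-tangle $2$--edge loop in $\G_A$ (exactly two positive tangles) and the one in $\G_B$ (exactly two negative tangles). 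If you replace your two estimates accordingly ($|s_A|+|s_B|=c$ and $\rho\le Q_{1/2}(K)+2$, the latter still requiring the case analysis you sketch plus these cross-tangle loops), your computation becomes the paper's proof; as written, it can only yield $t(K)-Q_{1/2}(K)-4$.
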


\begin{proof}
We follow the proof of \cite[Lemma~9.10]{FKP}. Since $D$ has at least two positive and at least two negative tangles, $D$ is both $A$ and $B$--adequate. Let $v_A$ be the number of vertices in $\G_A$, $e_A$ the number of edges in $\G_A$, and $e'_A$ the number of edges in $\G'_A$; and similarly for $v_B$, $e_B$, and $e'_B$. Then we have $-\chi(\G_A) = v_A - e_A$ and $-\chi(\G'_A) = v_A - e'_A$, and likewise for $-\chi(\G_B)$ and $-\chi(\G'_A)$.

Now construct the Turaev surface $T$ for $D$, as in \cite[Section~4]{dasbach-futer...}. The diagram $D$ will be alternating on $T$, and the graphs $\G_A$ and $\G_B$ embed in $T$ as graphs of the alternating projection, and are dual to one another, and so the number of regions in the complement of $\G_A$ on $T$ is equal to $v_B$. Because $K$ is a cyclic sum of
alternating tangles, $T$ is a torus, just as in \cite[Lemma~9.10]{FKP}. Thus we have
\[
v_A - e_A + v_B = \chi(T) = 0
\]

Now consider the number of edges of $\G_A$ that are discarded when we pass to $\G'_A$. By \cite[Lemma~8.14]{FKP}, edges may be lost in three ways:

\begin{enumerate}
\item If $r$ is a twist region with $c(r)>1$ crossings such that the $A$--resolution of $r$ gives $c(r)$ parallel segments, then $c(r) - 1$ of these edges will be discarded when we pass to $\G'_A$.

\item If $N_i$ is a negative tangle with slope $q \in (-1, -1/2]$, then one edge of $\G_A$ will be lost from the 2--edge loop spanning $N_i$ from north to south. 

\item If there are exactly two positive tangles $P_1$ and $P_2$, then one edge of $\G_A$ will be lost from the 2--edge loop that runs across $P_1$ and $P_2$ from east to west. 
\end{enumerate}

The same holds for $\G_B$, with $A$ and $B$ switched and positive and negative tangles switched. Combining this information, we obtain
\[ \begin{array}{rcl}
(e_A - e'_A) + (e_B - e'_B) & \leq & \sum \{c(r)-1\} +  \# \{ i: |q_i| \in [1/2, 1) \} + 2 \\
 & = & c(D) - t(D) + Q_{1/2}(D) + 2
\end{array} \]
where the sum is over all twist regions $r$.

Since the edges of $\G_B$ are in one--to--one correspondence with the crossings of $D$, we have
\[ \begin{array}{rclll}
-\chi(\G'_A) - \chi(G'_B) & = & e'_A + e'_B - v_A - v_B &   & \\
 & = & (e'_A + e'_B - e_A-e_B )& + e_B &  +  (e_A - v_A - v_B) \\
  & \geq & -c(D) + t(D) - Q_{1/2}(D) - 2 &+ c(D) & +  0 \\
  & = & t(D)  - Q_{1/2}(D) - 2. & & 
\end{array}\qedhere \]
\end{proof}

\begin{lemma}\label{lemma:9.11}
Let $D$ be a reduced, admissible Montesinos diagram with at least two positive tangles and at least two negative tangles. Then
\[ Q_{1/2} \leq \frac{t(D) + \# K}{2},\]
where $\# K$ denotes the number of link components of $K$. 
\end{lemma}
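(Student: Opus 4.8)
The plan is to clear denominators and prove the equivalent inequality $2\,Q_{1/2}(K) \le t(D) + \#K$ by splitting it into two independent counting estimates. I would introduce the auxiliary quantity $m$, the number of tangles of $D$ whose diagram is a single vertical band of exactly two crossings (equivalently, the slope $\pm\tfrac12$ tangles drawn as $[0,\pm2]$; call these \emph{clasps}). I claim
\[
t(D) \ \ge\ 2\,Q_{1/2}(K) - m \qquad\text{and}\qquad m \ \le\ \#K ,
\]
and the lemma follows by adding these.

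For the first estimate I would use the structure of admissible tangles. Since $D$ has both positive and negative tangles it cannot satisfy part~(1) of Definition~\ref{def:reduced}, so it satisfies part~(2): every slope has $0<|q_i|<1$, hence every tangle has continued fraction $[0,a_{n-1},\dots,a_1]$ with outermost band vertical. In such a diagram each band of crossings is a single twist region, and because consecutive bands in a tangle are perpendicular and the bands of adjacent tangles are separated by the crossing-free connecting arcs, distinct bands are distinct twist regions; thus $t(D)$ is at least the total number of bands. A $Q_{1/2}$ tangle consisting of a single band must be $[0,\pm2]$, since a single vertical band of $|a_1|$ crossings has slope $\pm 1/|a_1|$, which lies in $[1/2,1)$ only when $|a_1|=2$; every other $Q_{1/2}$ tangle therefore has at least two bands. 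Counting bands only within the $Q_{1/2}$ tangles, the $m$ clasps contribute $1$ each and the remaining $Q_{1/2}(K)-m$ tangles contribute at least $2$ each, giving $t(D)\ge 2\bigl(Q_{1/2}(K)-m\bigr)+m=2\,Q_{1/2}(K)-m$.

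For the second estimate I would argue that each clasp is a splitting locus. Replace every $[0,\pm2]$ tangle by the crossing-free $\infty$-tangle, whose two arcs join NW to SW and NE to SE. A clasp and the $\infty$-tangle induce the same pairing of the four boundary points, so this replacement changes neither the way the strands are joined nor the number of components, and the resulting link $K'$ satisfies $\#K'=\#K$. But an inserted $\infty$-tangle has no strand crossing from its west side to its east side: its left arc caps off the east ports of the tangle on its left and its right arc caps off the west ports of the tangle on its right. Hence each of the $m$ inserted $\infty$-tangles separates the cyclic sum, and cutting along all of them exhibits $K'$ (when $m\ge1$) as a split union of $m$ nonempty sublinks, where an empty run between two adjacent $\infty$-tangles simply closes up to an unknot component. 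Since each piece contributes at least one component, $\#K=\#K'\ge m$ (the case $m=0$ being trivial).

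The main obstacle is the second estimate: one must verify carefully that the $\infty$-tangle genuinely splits the diagram, that passing from $K$ to $K'$ preserves the component count, and that the degenerate empty runs between adjacent clasps are handled correctly. The first estimate is routine once the continued-fraction characterization of $|q|\in[1/2,1)$ and the identification of bands with distinct twist regions are in hand. As a consistency check, the bound is sharp for the $(2,-2,2,-2)$ pretzel, where $Q_{1/2}=m=\#K=4$ and $t=4$, which is precisely the exceptional link in Theorem~\ref{thm:KJ2}.
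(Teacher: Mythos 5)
Your overall route is the same as the paper's: split the $Q_{1/2}$ tangles into those of slope exactly $\pm 1/2$ and those of slope in $(1/2,1)$, bound the number of the former by $\#K$ via replacement by the $\infty$--tangle (your splitting argument is a correct, more detailed version of the paper's ``break'' argument), give the latter at least two twist regions each, and add. However, there is a genuine gap in your first estimate. The claim that ``distinct bands are distinct twist regions,'' and hence that $t(D)$ is at least the total number of bands, fails when a continued fraction ends in $\pm 1$: the innermost single-crossing band then forms a bigon with the adjacent perpendicular band and merges with it into a single twist region. The paper's conventions allow such expansions (only $a_i \neq 0$ for $i<n$ is imposed, not $|a_1|\geq 2$), so a slope-$1/2$ tangle may legitimately be drawn from $[0,1,1]$: two perpendicular one-crossing bands forming one clasp, isotopic to the $[0,2]$ diagram. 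Such a tangle is not counted in your $m$ (it is not literally a single vertical band of two crossings), yet it contributes only one twist region, so your inequality $t(D) \geq 2\,Q_{1/2}(K) - m$ can fail: for the five-tangle reduced admissible diagram with tangles $[0,1,1]$, $[0,1,1]$, $[0,-1,-1]$, $[0,-1,-1]$, $[0,-3]$ one has $t(D)=5$, $Q_{1/2}=4$, $m=0$, and $5<8$, even though the lemma itself holds there ($\#K=4$).

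The repair is exactly what the paper does: index by slope rather than by the drawn band structure. Let $m$ (the paper's $n$) be the number of tangles of slope $|q|=1/2$; each such tangle has exactly one twist region however it is drawn, and your $\infty$--tangle splitting argument still gives $m \leq \#K$. For tangles of slope $|q|\in(1/2,1)$, justify ``at least two twist regions'' invariantly: a rational tangle whose diagram has a single twist region is a band, so its slope is $\pm k$ or $\pm 1/k$ for some integer $k$, and no such number lies in $(1/2,1)$. With these two changes your inequalities $t(D)\geq 2\bigl(Q_{1/2}(K)-m\bigr)+m$ and $m\leq \#K$ are valid, and their sum is the lemma; this is precisely the paper's proof, organized as two added inequalities instead of a single chain.
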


\begin{proof}
Again we follow the proof of \cite[Lemma~9.11]{FKP}. The number $Q_{1/2}(K)$ is equal to the number of tangles with slope $q$ satisfying $|q| \in (1/2, 1)$ plus the number of tangles with slope $q$ satisfying $|q|=1/2$.

A tangle $R_i$ of slope $|q_i|\in (1/2, 1)$ has at least two twist regions, $t(R_i)\geq 2$. Thus $t(R_i)/2 \geq 1$ for such a tangle. 

A tangle of slope $|q|=1/2$ has only one twist region. It can be replaced by a tangle of slope $\infty$ without changing the number of link components of the diagram, but such a replacement gives a diagram with a ``break'' in it. Thus if $n$ is the number of tangles of slope $|q|=1/2$, then $n \leq \# K$. Hence we have
\[ \begin{array}{rcl}
Q_{1/2}(D) &=& \displaystyle{\sum_{ \{R_i \mbox{ tangle}: |q_i|\in (1/2, 1)\}} 1 + \sum_{\{R_j \mbox{ tangle} : |q_j| = 1/2\}} 1} \\[4ex]
  &\leq& \displaystyle{\sum_{\{R_i : |q_i|\in(1/2, 1)} \frac{t(R_i)}{2} + \sum_{\{R_j : |q_j|=1/2\}} \frac{t(R_j) + 1}{2}} \\[4ex]
  &\leq& \displaystyle{\frac{t(D) + \# K}{2}} 
\end{array}
\]\qedhere
\end{proof}

\begin{proof}[Proof of Theorem~\ref{thm:KJ2}]
If $K$ admits a reduced, admissible diagram with at least two positive tangles, then by work of Bonahon and Siebenmann \cite{bonahon2010}, the complement of $K$ must be hyperbolic, unless $K$ is the $(2, -2, 2, -2)$ pretzel link (see also \cite[Section~3.3]{fg:arborescent}). We exclude this pretzel link. 

For the lower bound in the theorem, by Lemmas~\ref{lem:twistnumber} and~\ref{lemma:9.11}, we have
\[
-\chi(\G'_A)-\chi(\G'_B) \geq \frac{t(D) - \# K - 4}{2}.
\]

By Theorem~\ref{thm:essential}, $S_A$ and $S_B$ are both essential in $S^3 \setminus A$. Then a theorem of Agol, Storm, and Thurston \cite[Theorem~9.1]{AST} applied to $S_A$ and $S_B$ implies that 
\[
\vol(S^3 \setminus K) \geq  \frac{v_8}{2}
\left(\chi_-\guts(S^3 \cutalong S_A) + \chi_-\guts(S^3 \cutalong S_B)\right).
\]
By \cite[Theorem~5.14]{FKP}, $\guts(S^3\cutalong S_A) = \chi_-(\G'_A)-||E_c||$; similarly for $\guts(S^3\cutalong S_B)$. By Theorem~\ref{thm:estimateguts}, along with Propositions~\ref{prop:++-prop} and~\ref{prop:+-+-prop}, we may assume that $||E_c||\leq 1$ (possibly after a mutation of the diagram in the $++--$ case), for both the $A$ and $B$ cases. Thus
\[\begin{array}{rcl}
\vol(S^3\setminus K) &\geq&
\displaystyle{\frac{v_8}{2}}\left(\chi_-(\G'_A) + \chi_-(\G'_B)\right) - \displaystyle{\frac{v_8}{2}} \left(||E_c||_A + ||E_c||_B \right) \\[4mm]

& \geq & \displaystyle{\frac{v_8}{4}(t(D) - \# K - 4)- \frac{v_8}{2} (2)} \\
& \geq & \displaystyle{\frac{v_8}{4} (t(D) - \# K - 8)}. 
\end{array}
\]

As for the upper bound on volume, this goes straight through as in the proof of \cite[Theorem~9.12]{FKP} without change. That is, augment the Montesinos link by drilling out a link component $B$ encircling two strands at the east of the tangle. The result is hyperbolic, and a belted sum of tangles as in \cite{adams:3-punct}. The estimates on volume follow. 
\end{proof}

\bibliographystyle{amsplain}

\bibliography{biblio}

\end{document}